\def\namedlabel#1#2{\begingroup
	#2%
	\def\@currentlabel{#2}%
	\phantomsection\label{#1}\endgroup
}
\newcommand{\N}{\mathbb{N}}
\newcommand{\E}{\mathbb{E}}
\newcommand{\R}{\mathbb{R}}
\newcommand{\Rd}{\mathbb{R}^d}
\newcommand{\Rzerod}{\mathbb{R}_0^d}
\newcommand{\Rdk}{\mathbb{R}^{d \times k}}
\newcommand{\PP}{\mathbb{P}}
\newcommand{\F}{\cF}
\newcommand{\cF}{\mathcal{F}}
\newcommand{\equa}{\begin{eqnarray*}}
	\newcommand{\tion}{\end{eqnarray*}}
\newcommand{\equal}{\begin{eqnarray}}
\newcommand{\tionl}{\end{eqnarray}}
\newcommand{\czero}{c_0}
\newcommand{\cone}{c_1}
\newcommand{\ctwo}{c_2}
\newcommand{\cthree}{c_3}
\newcommand{\cfour}{c_4}
\newcommand{\cfive}{c_5}
\newcommand{\ietat}{e^{\int_0^t\eta(\tau)d\tau}}
\newcommand{\ietas}{e^{\int_0^s\eta(\tau)d\tau}}
\newcommand{\ietaT}{e^{\int_0^T\eta(\tau)d\tau}}
\newcommand{\fsyzu}{f(s,Y_s,Z_s,U_s)}
\newcommand{\Kfzero}{K_{|\fzero|}}
\newcommand{\Ifzero}{I_{|\fzero|}}
\newcommand{\fzero}{f_0}
\newcommand{\fzerot}{\fzero(t)}
\newcommand{\inttTR}{\int_{{]t,T]}\times\R_0}}
\newcommand{\filtration}{(\mathcal{F}_t)_{t \in [0,T]}}
\newcommand{\sequence}[1]{(#1_n)_{n \in \N}}
\newcommand{\innerproduct}[2]{{\langle #1 , #2 \rangle}}
\newcommand{\tempinnerproduct}[2]{{\langle #1 , #2 \rangle}}
\newcommand{\changed}[1]{
	\begin{tcolorbox}[colframe=black,
		colback=SpringGreen,
		title=Changed]
		#1
	\end{tcolorbox}
}
\renewcommand{\changed}[1]{#1}
\DeclareMathOperator{\esssup}{esssup}
\DeclareMathOperator{\trace}{trace}
\newcommand{\levy}{L\'evy\xspace}
\newcommand{\ito}{It\^{o}\xspace}
\newcommand{\lp}{L^p}
\newcommand{\lloc}{L_{loc}}
\newcommand{\mloc}{\mathcal{M}_{loc}}
\begin{document}

\title{$L^p$-Solutions and                              
	Comparison Results for L\'evy Driven BSDEs in a Monotonic,        
	General Growth Setting}
%\subtitle{Do you have a subtitle?\\ If so, write it here}

%\titlerunning{$L^p$-Solutions and Comparison Result}        % if too long for running head

\author{Stefan Kremsner *         \and
        Alexander Steinicke %etc.
}

%\authorrunning{Short form of author list} % if too long for running head

\institute{Stefan Kremsner \at
              Department of Mathematics, University of Graz, Austria. \\
%              Tel.: +123-45-678910\\
%              Fax: +123-45-678910\\
              \email{stefan.kremsner@uni-graz.at}             \\
%             \emph{Present address:} of F. Author  %  if needed
			   * Corresponding author
           \and
           Alexander Steinicke \at
              Department of Mathematics, Montanuniversitaet Leoben, Austria. \\
              \email{alexander.steinicke@unileoben.ac.at}           %  \\
}

\date{}
% The correct dates will be entered by the editor

% remove header
\def\makeheadbox{\relax}

\maketitle

\begin{abstract}
We present a unified approach to $L^p$-solutions ($p > 1$) of multidimensional backward stochastic differential equations (BSDEs) driven by L\'evy processes and more general filtrations. New existence, uniqueness and comparison results are obtained. The generator functions obey a time-dependent extended monotonicity (Osgood) condition in the $y$-variable and have general growth in $y$.  Within this setting, the results generalize those of Royer, Yin and Mao, Yao,
Kruse and Popier and Geiss and Steinicke.
\keywords{ Backward stochastic differential equation \and L\'evy process \and $L^p$-solutions \and predictable version}
% \PACS{PACS code1 \and PACS code2 \and more}
 \subclass{60H10}
\end{abstract}

\section{Introduction}
\label{intro}

The existence and uniqueness of solutions to a backward stochastic differential equation (BSDE) has been extensively investigated in many, but also various specifically chosen settings, partly due to certain applications in practice and partly also for theoretically interesting reasons. 
%This type of stochastic equation arises in many different fields, for example in pricing problems in mathematical finance or in stochastic control theory. 
In this paper we both unify and simplify the approach for a general BSDE framework driven by a \levy process with a straightforward extension to more general filtrations. We show new comparison results and relax the assumptions known so far for guaranteeing unique $\lp$-solutions, $p > 1$, to a BSDE with terminal condition $\xi$ and generator $f$ that satisfies a monotonicity condition. An $\lp$-solution is a triplet \changed{of processes} $(Y,Z,U)$ from suitable $L^p$-spaces (defined in \autoref{sec:setting}) which satisfies a.s.
\begin{align}\label{eq_lp_solution_informal}
Y_t=\xi+\int_t^T f(s,Y_s,Z_s,U_s)ds-\int_t^T Z_s dW_s-\int_{{]t,T]}\times\Rd\setminus\{0\}}U_s(x)\tilde{N}(ds,dx) \,,
\end{align}
for each $t\in {[0,T]}$, where $W$ is a Brownian motion, $\tilde{N}$ is a compensated Poisson random measure  independend of $W$.
The BSDE \eqref{eq_lp_solution_informal} itself will be denoted by $(\xi,f)$.

\subsection{Related Works}\label{related_works}

For nonlinear BSDEs ($\xi$, $f$) driven by Brownian \changed{motion}, existence and uniqueness results  were first systematically studied by Pardoux and Peng \cite{pardoux1990adapted} with $(\omega,y,z)\mapsto f(\omega,y,z)$ Lipschitz in $(z,y)$ and $\xi$ square integrable. The importance of BSDEs in mathematical finance and stochastic optimal control was further elaborated by various works e.g. by El Karoui et al. \cite{elkaroui} \changed{which consider} Lipschitz generators, $L^p$-solutions and Malliavin derivatives of BSDEs in the Brownian setting. The ambition to weaken the assumptions on $f$ and $\xi$ to still guarantee a unique solution gave birth to a large number of contributions, where -- in the case of a generator with Lipschitz dependence on the $z$-variable --  at least a few should be mentioned herein:
Pardoux \cite{pardoux} and Briand and Carmona \cite{briand2000bsdes} considered monotonic generators w.r.t. $y$ with different growth conditions.
Mao \cite{mao} used the Bihari-LaSalle inequality to generalize the growth condition.
Briand et al. \cite{briand2003lp} proved existence and uniqueness of a solution in
the case where the generator may have a general growth in the $y$-variable and both $\int_0^T |f(s,0,0)|ds$ and $\xi$ belong to $L^p$ for some $p \ge 1$.
Generalizing the driving randomness,
Tang and Li \cite{tang1994necessary} and many other papers studied BSDEs including jumps by a Poisson random measure independent of the Brownian motion. 
Treating BSDEs in the case of quadratic growth in the $z$-variable, a considerable amount of articles was published in the recent years starting from the seminal paper of Kobylanski \cite{kobylanski2000} in 2000 to recent papers using BMO methods such as \cite{BriandRichou} in the Brownian case or also comparison theorems like in \cite{FujiiTakahashi} who consider an additional Poisson random measure as driving noise. We skip detailed comments in the direction of quadratic growth BSDEs as we will not consider this setting in our article.\bigskip

Recent and most relevant for the present paper are the results by
 Kruse and Popier \cite{Kruse} considering $\lp$-solutions for BSDEs \changed{driven by Brownian motion, a Poisson random measure and an additional martingale term under a monotonicity condition.} They included the case of random time horizons.
 Yao \cite{Yao} studied $\lp$-solutions to BSDEs with a finite activity \levy process for $1 < p < 2$ and used a generalization for the monotonicity assumption similar to the one of \cite{mao} and also used in Sow \cite{bamba}.
Generalizing the $\lp$-assumptions for the monotonic generator setting, in \cite{fan2019existence} the existence (and uniqueness in \cite{buckdahn2018uniqueness}) of a solution was proven for a scalar linearly growing BSDE when the terminal value $\xi$ admitted integrability of $|\xi|\exp\left(\mu \sqrt{2\log(1+|\xi|)}\right)$ for a parameter $\mu > \mu_0$, for some critical value $\mu_0 > 0$. \changed{ Moreover, a counterexample  in \cite{fan2019existence} shows that for the case $\mu < \mu_0$ the preceding integrability is not sufficient to guarantee existence. In the critical case $\mu=\mu_0$ they prove existence and uniqueness of a solution assuming a uniform Lipschitz generator.}

\subsection{Main Contribution}\label{amin_contribution}

Within our approach, the results shed new light to the extensive literature of BSDE existence and uniqueness results as follows. In \cite{Kruse17}, Kruse and Popier designed function spaces such that their results of \cite{Kruse} extend to $1<p<2$. In the present article, we show that the BSDEs' solutions for $1<p<2$ are even contained in the usual $L^p$ spaces as defined for $p\geq 2$.
Moreover an additional martingale term $M$ orthogonal to $W$ and $\tilde{N}$ as used by Kruse and Popier \cite{Kruse} can also be added to our setting as an extension of \eqref{eq_lp_solution_informal},
\begin{align}\label{eq:extended_bsde}
Y_t=\xi+\int_t^T f(s,Y_s,Z_s,U_s)ds-\int_t^T Z_s dW_s-\int_{{]t,T]}\times\R^\ell\setminus\{0\}}U_s(x)\tilde{N}(ds,dx) - \int_t^T dM_s ,
\end{align}

 with unknown variables $(Y,Z,U,M)$, as the careful analysis in their paper shows how the bracket process $[M]$ has to be treated in an a priori estimate. All the results we obtain are still valid in this extended setting - see \autoref{rem:extendedsetting} below. Nonetheless we decided to omit the presentation of the straightforward martingale term, since the main difficulty lies in the treatment of the compensated Poisson random measure. \bigskip

The paper of Geiss and Steinicke \cite{geiss2018monotonic}, placed in a 1-dimensional $L^2$-setting only, requires a linear growth condition on the generator and needs approximation results for the comparison theorem, while the present setting allows first of all general growth, but even uses a simpler approximation technique for the comparison theorem avoiding \changed{deep-lying} measurability results and, for $p\geq 2$, only requires comparison of the generators on the solution processes. \bigskip

Furthermore, in contrast to \cite{briand2003lp}, \cite{Kruse} and others, this article establishes the more general monotonicity (Osgood) condition with a non-decreasing, concave function $\rho$ to relax the generator's dependence on $y$ (see also Mao \cite{mao}). This includes e.g. continuities of the type as the function $y\mapsto -y\log(|y|)$ possesses at $y=0$. Using the general approach, similar a priori estimates are shown to still hold true in order to guarantee uniqueness of an $\lp$-solution, $p\geq 2$. \bigskip

In addition, the results of Yao \cite{Yao} are extended in the sense that we do not require the jump process to have a finite \levy measure.\bigskip

 Hence, we close several gaps in the theoretical understanding of solutions to BSDEs driven by a \levy process, for the class of generators which are Lipschitz in the $z$- and $u$-variables. 
 The more delicate techniques needed for this paper's approach for existence and uniqueness are inspired by the ideas of \cite{briand2003lp} along with \cite{Kruse} and \cite{elkaroui}. In that spirit, before starting the main proofs, we obtain useful a priori estimates for the solution processes. For the comparison theorem we enhance ideas and simplify proofs from \cite{geiss2018monotonic} and \cite{royer}.

\subsection{\changed{Structure of the Pape}r}\label{intro:structure}

This paper is organized in the following way:
% - like a BSDE going webackward in $p$: 
First we establish the setting in \autoref{sec:setting} and state the assumptions and the main theorem (\autoref{sec:main_theorem}). After developing a priori estimates in \autoref{sec:apriori_estimates} we finally prove existence and uniqueness of $\lp$-solutions for $p > 1$  in \autoref{sec:proof_of_lp_existence} and end up with the comparison results for $p \ge 2$ and $1 < p < 2$ in \autoref{sec:comparison_result}.

\section{Setting} \label{sec:setting}

Throughout the paper, we will use the following setting:
In dimension $d \ge 1$, let $|\cdot|$ denote the Euclidean distance. For $x,y \in \Rd$ we write $\innerproduct{x}{y} = \sum_{i=1}^d x_i y_i$, and for $z \in \Rdk, k\geq 1,$ we denote $|z|^2 = \trace({z z^*})$.  The operations $\min(a,b)$ and $\max(a,b)$ will be denoted by $a \wedge b$ and $a \vee b$.\bigskip

 Let $X=\left(X_t\right)_{t\in{[0,T]}}$ be a c\`adl\`ag L\'evy process with values in $\Rd$ on a complete probability space $(\Omega,\mathcal{F},\mathbb{P})$
with L\'evy measure $\nu$. By
$\left({\mathcal{F}_t}\right)_{t\in{[0,T]}}$ we will denote the augmented natural filtration of $X$ and assume that $\mathcal{F}=\mathcal{F}_T.$ 
Equations or inequalities for objects on these spaces are considered up to $\mathbb{P}$-null sets. Conditional expectations $\E\left[\ \cdot\ \middle|\mathcal{F}_t\right]$ will be denoted by $\E_t$.\medskip

The L\'evy-It\^o decomposition of $X$ can be written as
\begin{equation}\label{LevyIto}
X_t = a t + \Sigma W_t   +  \int_{{]0,t]}\times \{ |x|\le1\}} x\tilde{N}(ds,dx) +  \int_{{]0,t]}\times \{ |x|> 1\}} x  N(ds,dx),
\end{equation}
where $a\in\Rd$, $\Sigma \in \Rdk$ with full column rank, $W$ is a $k$-dimensional standard Brownian motion and $N$ ($\tilde N$) is the (compensated) Poisson random measure corresponding to $X$. For the general theory of \levy processes, we refer to \cite{applebaum} or \cite{satou}. This setting can be adapted to a pure jump process, if one sets $\Sigma = 0$ and omits the stochastic integrals with respect to $W$ in the BSDE. 
Generalizing the above setting slightly, $\mathcal{F}$ can be assumed to be generated by a $k$-dimensional Brownian motion $W$ and an independent (from $W$) compensated Poisson random measure $\tilde{N}$ on $\mathbb{R}^\ell\!\setminus\! \{0\}$ for some $\ell\geq 1$, the assumption of a driving process $X$ can in principle be omitted. For convenience however, we will stick to the setting emerging from a driving L\'evy process $X$.

%%%%%%%%%%%%%%%%%%%%%%%%%%%%%%%%%%%%%%%
%%%             NOTATION            %%%
%%%%%%%%%%%%%%%%%%%%%%%%%%%%%%%%%%%%%%%

\subsection{Notation}\label{sec:notation}

Let $0<p\leq \infty$.
%{\bf Notation }
\begin{itemize}
	\item
	
	We use the notation $(L^p,\|\cdot\|_p):=\left(L^p(\Omega,\cF,\mathbb{P}),\|\cdot\|_{L^p}\right)$ for the space of all $\cF$-measurable functions $g : \Omega \to \Rd$ with
	$$
		||g||_{\lp} := \left(\int_{\Omega} |g|^p d\PP \right)^{1/p} < \infty \,
		\,\,\,
		 \text{if } p < \infty,
		\,\,\,
		\text{ and }
			\,\,\,
	||g||_{L^\infty} := \esssup_{\omega \in \Omega} |g(\omega)| < \infty .
	$$
	
\item Let  $\mathcal{S}^p$ denote the  space of all $\filtration$-progressively measurable and c\`adl\`ag processes \\ $Y\colon\Omega\times{[0,T]} \rightarrow \Rd$ such that
\equa
\left\|Y\right\|_{\mathcal{S}^p}:=\Big\|\sup_{0\leq t\leq T} \left|Y_{t}\right|\Big\|_p < \infty \,.
\tion

\item We define $L^p(W) $ as the space of all progressively measurable processes $Z\colon \Omega\times{[0,T]}\rightarrow \Rdk$  such that
\equa
\left\|Z\right\|_{L^p(W) }:=\left\|\left(\int_0^T\left|Z_s\right|^2 ds\right)^\frac{1}{2}\right\|_p < \infty \,.
\tion

\item Let $\Rzerod:= \Rd\!\setminus\!\{0\}$. We define $L^p(\tilde N)$ as the space of all random fields $U\colon \Omega\times{[0,T]}\times{\Rzerod}\rightarrow \Rd$ 
which are measurable with respect to
$\mathcal{P}\otimes\mathcal{B}(\Rzerod)$ (where $\mathcal{P}$ denotes the predictable $\sigma$-algebra on $\Omega\times[0,T]$ generated
by the left-continuous $\filtration$-adapted processes, and $\mathcal{B}$ is the Borel-$\sigma$-algebra) such that
\equa
\left\|U\right\|_{L^p(\tilde N) }:=\left\|\left(\int_0^T\int_{\Rzerod}\left|U_s(x)\right|^2 \nu(dx)ds\right)^\frac{1}{2}\right\|_p < \infty\,.
\tion

\item $L^2(\nu):= L^2(\Rzerod, \mathcal{B}(\Rzerod), \nu),$ $\|\cdot \|:=\|\cdot \|_{L^2(\nu)}.$

\item { $L^p([0,T]):=L^p([0,T],\mathcal{B}([0,T]), \lambda)$, where $\lambda$ is the Lebesgue measure on ${[0,T]}$.}

\item $L_{loc}(W)$ denotes the space of $\Rdk$-valued progressively measurable processes, such that for every $t > 0$, 
$$
\int_0^t | Z_s |^2 ds < \infty \,, \quad \mathbb{P}\text{-a.s.}
$$

\item $L_{loc}(\tilde{N})$ denotes the space of $\mathcal{P}\otimes\mathcal{B}(\Rzerod)$-measurable random fields $U\colon \Omega\times{[0,T]}\times{\Rzerod}\rightarrow \Rd$, such that for every $t > 0$, 
$$
\int_0^t\int_{\Rzerod} \left(| U_s(x) |^2\vee|U_s(x)|\right) \nu(dx)ds < \infty \,, \quad \mathbb{P}\text{-a.s.}
$$

\item With a slight abuse of notation we define
\equa
&& \hspace{-2em} L^p(\Omega; L^1([0,T])) \\ \hspace{-1em}&:=&\!\!\!\!\! \left  \{F : \Omega \times [0, T] \to \R: F \text{ is } \cF \otimes \mathcal{B}([0,T])\text{-measurable, } { \left\| \!\int_0^T \!\!|F(\omega, t)| dt\right\|_p }< \infty  \right \}.  \notag
\tion

For $F \in  L^p(\Omega; L_1([0,T]))$ we define 
\begin{align}
\label{eq:i_f_and_k_f} I_F(\omega):= \int_0^T F(\omega, t)dt \quad \text{ and } \quad K_F(\omega, s) := 
\Bigg\{\begin{array}{lr}
\frac{F(\omega, s)}{I_F(\omega)}, & \text{if } I_F(\omega) \ne 0\\
0, & \text{if } I_F(\omega) = 0
\end{array}.
\end{align}
\changed{
The notions $I_F$ and $K_F$ are designed to make use of the simple properties, $F = I_F K_F$ and $I_F^{p-1} \int_0^T F(t) dt = I_F^p$ together with $\int_0^T K_F dt = 1$, $\mathbb{P}$-a.s. These properties are used e.g. below equation \eqref{gammaeq2}.
}

\item

We consider the terminal condition $\xi$ to be an $\cF_T$-measurable random variable and the generator to be a random function $f: \Omega \times [0,T] \times \Rd \times \Rdk \times L^2(\nu) \to \Rd$.
\end{itemize}

\begin{definition}\label{defi:lloc-solution}
 An {\bf $\lloc$-solution to a BSDE} $(\xi,f)$ with terminal condition $\xi$ and generator $f$ is a triplet 
$$	
(Y,Z,U)\in L_{loc}(W)\times L_{loc}(W)\times L_{loc}(\tilde N), 
$$
  adapted to $\left({\mathcal{F}_t}\right)_{t\in{[0,T]}}$, which satisfies for all $t\in{[0,T]}$,
\begin{align*}%\label{eq:lp_solution}
Y_t=\xi+\int_t^T f(s,Y_s,Z_s,U_s)ds-\int_t^T Z_s dW_s-\int_{{]t,T]}\times\Rzerod}U_s(x)\tilde{N}(ds,dx),\quad \mathbb{P}\text{-a.s}.
\end{align*}
\end{definition}

\begin{definition}\label{defi:lp-solution}
An {\bf $L^p$-solution to a BSDE} $(\xi,f)$ with terminal condition $\xi$ and generator $f$ is an $\lloc$-solution $(Y,Z,U)$ to the BSDE $(\xi,f)$ which satisfies  
\begin{align*}
(Y,Z,U)\in \mathcal{S}^p\times L^p(W)\times L^p(\tilde N).
\end{align*}
\end{definition}

\begin{remark}\label{rem:extendedsetting}
	An extension of our setting in the way of \cite{Kruse17} is the following:
	
	Redefine the spaces above using a filtration $\left({\mathcal{F}_t}\right)_{t\in{[0,T]}}$ on $(\Omega, \F, \PP)$, that is assumed to be quasi-left continuous, satisfies the usual conditions and supports a $k$-dimensional Brownian motion $W$ and a compensated Poisson random measure $\tilde{N}$ on $\R^\ell \!\setminus\! \{0\}$. 
	Furthermore, we introduce the space $\mloc$ of c\`adl\`ag local martingales orthogonal to $W$ and $\tilde{N}$ and the space $\mathcal{M}$ of true martingale processes in $\mloc$. Moreover define 
	$$
	\mathcal{M}^p := \left\{ M \in \mathcal{M} : \E\left[ ([M]_T)^{p/2} \right] < \infty \right\}
	$$
	 and in the sense of \autoref{defi:lloc-solution} and \ref{defi:lp-solution} let an $\lloc$-solution (respectively $\lp$-solution) to a BSDE \eqref{eq:extended_bsde} be a tuple
	$$	
	(Y,Z,U,M)\in L_{loc}(W)\times L_{loc}(W)\times L_{loc}(\tilde N) \times \mloc, 
	$$
	respectively
	$$
	(Y,Z,U,M)\in \mathcal{S}^p\times L^p(W)\times L^p(\tilde N) \times \mathcal{M}^p
	$$
	safisfying equation \eqref{eq:extended_bsde} instead of \eqref{eq_lp_solution_informal}. As mentioned in the introduction, our main results in the following sections can also be shown within the extended setting producing some extra lines of technical computations.
\end{remark}

%%%%%%%%%%%%%%%%%%%%%%%%%%%%%%%%%%%%%%%
%%%        LEVY FINITE MEASURE      %%%
%%%%%%%%%%%%%%%%%%%%%%%%%%%%%%%%%%%%%%%

\subsection{\levy process with finite measure}\label{sec:finlev}

The driving \levy process, given by its \levy-\ito-decomposition  
\eqref{LevyIto}
will be approximated for $n\geq 1$ by
$$X^n_t=at+\Sigma W_t+\int_{]0,t]\times \{|x|>1\}}x N(ds,dx)+\int_{]0,t]\times \{1/n \leq |x|\leq 1\}}x \tilde{N}(ds,dx).$$
The process $X^n$ has a finite L\'evy measure. Note furthermore, that the compensated Poisson random measure associated to $X^n$ can be expressed as $\tilde{N}^n=\chi_{\{1/n \leq |x|\}}\tilde{N}$, where $\chi_A$ denotes the indicator function of a set $A$. Let 
\begin{align*} %\label{fn}
\mathcal{F}^0&:=\{\Omega, \emptyset\} \vee \mathcal{N},  \nonumber \\
\mathcal{F}^n&:= \sigma(X^n)  \vee \mathcal{N}, \quad n\geq 1,
\end{align*}
where $\mathcal{N}$ stands for  the null sets of $\mathcal{F}.$ 
Denote by  $\E_n$ the conditional 
expectation  $\E\left[\ \cdot\ \middle|\mathcal{F}^n\right]$.  

\section{Main Theorem} \label{sec:main_theorem}

%%%%%%%%%%%%%%%%%%%%%%%%%%%%%%%%%%%%%%%
%%%%%%%%%%%%%%%%%%%%%%%%%%%%%%%%%%%%%%%
%%%                                 %%%
%%%       LP Solutions, p >= 2      %%%
%%%                                 %%%
%%%%%%%%%%%%%%%%%%%%%%%%%%%%%%%%%%%%%%%
%%%%%%%%%%%%%%%%%%%%%%%%%%%%%%%%%%%%%%%

With this setting in mind, we now state the main theorem based on the following assumptions, with a slight distinction for $p \ge 2$ and $p < 2$, which turns out to be quite natural for the proofs. Instead of a Lipschitz condition, we require the weaker conditions \ref{A3ge2} and respectively \ref{A3le2}, referred to as one-sided Lipschitz or monotonicity condition for the generator $f$.

%%%%%%%%%%%%%%%%%%%%%%%%%%%%%%%%%%%%%%%
%%%    Existence and Uniqueness     %%%
%%%%%%%%%%%%%%%%%%%%%%%%%%%%%%%%%%%%%%%

\subsection{Assumptions}\label{sec:assumptions}

\begin{enumerate}[label={(A\,\arabic*)}]

	\item\label{A1} For all $(y,z,u) \in \Rd \times \Rdk \times L^2(\nu): (\omega,s)\mapsto f(\omega,s,y,z,u)$ is progressively measurable and the process $\fzero=(f(t,0,0,0))_{t\in{[0,T]}}$ is in $ L^p(\Omega;L^1([0,T]))$.

	\item\label{A2}  For all $r>0$ there are nonnegative, progressively measurable processes $\Phi$, $\psi_r$ with 
	$$
	\left \|\int_0^T \Phi(\cdot,s)^2ds \right \|_{\infty}<\infty
	$$
	and   $\psi_r\in L^1(\Omega\times{[0,T]})$ such that for all $(z,u) \in \Rdk \times L^2(\nu)$,
	\begin{align*}
		&\sup_{|y|\leq r}|f(t,y,z,u)-\fzerot|\leq \psi_r(t)+\Phi(t)(|z|+\|u\|), \quad \mathbb{P}\otimes\lambda\text{-a.e.}
	\end{align*}
	
	\item[\namedlabel{A3ge2}{(A3$_{\ge 2}$)}] For $p\geq 2$: 
	
	For $\lambda$-almost all $s$, the mapping $(y,z,u)\mapsto f(s,y,z,u)$ is $\mathbb{P}$-a.s. continuous. Moreover, there is a nonnegative function $\alpha\in L^1([0,T])$ and progressively measurable processes $\mu, \beta$ with \sloppy $\int_0^T\left(\mu(\omega,s)+\beta(\omega,s)^2\right) ds < \infty$, $\mathbb{P}$-a.s. such that for all $(y,z,u), (y',z',u') \in \Rd \times \Rdk \times L^2(\nu)$,
	\begin{align}\label{eq:A3ge2inequality}
	&|y-y'|^{p-2} \innerproduct{y-y'}{f(t,y,z,u)-f(t,y',z',u')} \nonumber \\
	&\leq \alpha(t)|y-y'|^{p-2}\rho(|y-y'|^2)+\mu(t)|y-y'|^p+\beta(t)|y-y'|^{p-1}(|z-z'|+\|u-u'\|),
	\end{align}
		
	$\mathbb{P}\otimes\lambda\text{-a.e.}$, for a nondecreasing, continuous and concave function $\rho$ from ${[0,\infty[}$ to itself, satisfying  $\rho(0)=0$,  
	$\lim_{x\to0} \frac{\rho(x^2)}{x} = 0$ and the Osgood condition $\int_{0^+}\frac{1}{\rho(x)}dx := \int_{0}^\epsilon \frac{1}{\rho(x)}dx=\infty $, for some $\epsilon > 0$. 

	\item[\namedlabel{A3le2}{(A3$_{< 2}$)}] For $0<p<2$:
	
	 For $\lambda$-almost all $s$, the mapping $(y,z,u)\mapsto f(s,y,z,u)$ is $\mathbb{P}$-a.s. continuous. Moreover, there is a nonnegative function $\alpha\in L^1([0,T])$, $C>0$ and  progressively measurable processes $\mu, \beta_1, \beta_2$ with $\int_0^T\left(\mu(\omega,s)+\beta_1(\omega,s)^2+\beta_2(\omega,s)^q\right) ds < C$, $\mathbb{P}$-a.s for some $q > 2$. such that for all $(y,z,u), (y',z',u') \in \Rd \times \Rdk \times L^2(\nu)$, $y\neq y'$	
	\begin{align}\label{eq:A3le2inequality}
	&|y-y'|^{p-2}\innerproduct{y-y'}{f(t,y,z,u)-f(t,y',z',u')} \nonumber \\
	&\leq \alpha(t)\rho(|y-y'|^p)+\mu(t)|y-y'|^p+|y-y'|^{p-1}\left(\beta_1(t)|z-z'|+\beta_2(t)\|u-u'\|\right),
	\end{align}
	
	$\mathbb{P}\otimes\lambda\text{-a.e.}$, for a nondecreasing, continuous and concave function $\rho$ from ${[0,\infty[}$ to itself, satisfying $\rho(0)=0$, 
	$\lim_{x \to 0} \frac{\rho(x^p)}{x^{p-1}} = 0$ and $\int_{0^+}\frac{1}{\rho(x)}dx=\infty$.

\end{enumerate}

\begin{remark}~ %%% <-  Note that space!
	\label{remA3}
	\begin{enumerate}[label=({\roman*})]
		\item%\label{remA3:limitassumption}
		 The limit assumptions $\lim_{x \to 0} \frac{\rho(x^2)}{x} = 0$ together with \eqref{eq:A3ge2inequality} or $\lim_{x \to 0} \frac{\rho(x^p)}{x^{p-1}} = 0$ together with \eqref{eq:A3le2inequality} already imply that the generator $f$ is Lipschitz in $z, u$. Moreover $\beta$ (and analogously for \ref{A3le2} the process $\beta_1 + \beta_2$) can take the role of $\Phi$ in \ref{A2}. Nonetheless, for convenience in the proofs, we will still use the generic function $\Phi$.

		\item\label{remA3:A3bounds}
		The $\rho$-function appearing in the right hand sides of \ref{A3ge2} and \ref{A3le2} admits the following inequalities, which play important roles in the proofs:
		\changed{
		\begin{enumerate}[label=({\alph*})]
			\item\label{remA3ge2} 
			$
			\rho(|y|^2)|y|^{p-2} \le \rho(|y|^p) + \rho(1) |y|^p \,,
			\quad \text{for } p \ge 2,
			$
			\item\label{remA3le2} 
			$
			\rho(|y|^p)|y|^{2-p} \le \rho(|y|^2) + \rho(1) |y|^2 \,,
			\quad \text{for }  0 < p < 2.
			$
		\end{enumerate}		
	}
	\end{enumerate}
\end{remark}

\begin{proof}
 For \ref{remA3:A3bounds}, we see that, if $|y| < 1$, then $|y|^{p-2} < 1$ and by the concavity of $\rho$,
	$$
		\rho(|y|^2)|y|^{p-2} \le \rho(|y|^2|y|^{p-2}) = \rho(|y|^p)\,.
	$$
	For $|y| \ge 1$ we have by the concavity of $\rho$,
	$$
		\rho(|y|^2)|y|^{p-2} \le \rho(1)|y|^2|y|^{p-2} = \rho(1)|y|^{p} \,.
	$$
	
	The case $ 0 < p \le 2$ is similar. 
\end{proof}

\begin{remark}~ 
	\begin{enumerate}[label=({\roman*})]
		\item%\label{remA3:limitassumption}
		In \ref{A3le2}, if $\beta_2$ is deterministic, we could impose the weaker condition $q=2$ as described later in \autoref{rem:beta_2_deterministic}.
		
		\item
			The following example is constructed in order to demonstrate the possibilities in this setting for $d=1, p>1$. All the involved expressions are chosen to exploit the assumptions on the coefficients which may be time-dependent, unbounded and some even random. The generator's dependence on $y$ is not Lipschitz (not even one-sided Lipschitz) and of super-linear growth:			
			\begin{align*}
			f(\omega,t,y,z,u)=&\frac{-1}{\sqrt{t}}y\log(|y|)-\mu(\omega,t)\left(y^3+y^\frac{1}{3}\right)+\beta_1(\omega,t)(z+\sin(z)\cos(y))\\
			&\ +\beta_2(\omega,t)\int_{\R_0}\left(\arctan(y\kappa(x)u(x))+u(x)\right)\kappa(x)\nu(dx)+f_0(t),
			\end{align*}
			where 
			\begin{itemize}
			\item $\mu$ is given by $\mu(\omega,t)=\sum_{n=1}^\infty \frac{1}{n^2\sqrt{t-t_n(\omega)}}$, with $(t_n(\omega))_{n\geq 1}$ being a numeration of the jumps of the trajectory $t\mapsto X_t(\omega)$ of the L\'evy process and $\mu(t,\omega)=0$ if $t\mapsto X_{t}(\omega)$ has no jumps,
			\item $\beta_1(\omega,t)=\begin{cases}\frac{\chi_{[T/2,T]}(t)}{\sqrt{|t-W_{T/2}(\omega)|}\left(|\log(|t-W_{T/2}(\omega)|+1)|\right)},& \text{when defined,}\\ \quad\quad\quad\quad 0 & \text{else,}\end{cases}$
			\item $\beta_2(\omega,t)=\begin{cases}\frac{\chi_{[T/3,T]}(t)}{\left|\log\left(\left|t-\frac{|W_{T/3}(\omega)|}{1+|W_{T/3}(\omega)|}\right|\right)\right|},& \text{when defined,}\\ \quad\quad\quad\quad 0 & \text{else,}\end{cases}$
			\item $\kappa(x)=1\wedge |x|$,
			\item $f_0(\omega,t)=\begin{cases} \left(\int_0^t\frac{\sqrt{s}\ \exp\left(\frac{W_s^2}{2s}\right)}{|W_s|\left(|\log\left(\frac{|W_s|}{\sqrt{t}}\right)|+1\right)^2}ds\right)^\frac{1-p}{p}\frac{\sqrt{t}\ \exp\left(\frac{W_t^2}{2t} \right)}{|W_t|\left(|\log\left(\frac{|W_t|}{\sqrt{t}}\right)|+1\right)^2}, & \text{when defined,}\\ \quad\quad\quad\quad 0 & \text{else.}\end{cases}$
	\end{itemize}
	\end{enumerate}
	
\end{remark}

\subsection{Main Theorem}\label{subsec:main_theorem}
\begin{theorem}[Existence and Uniqueness]\label{existence}
	Assume that the terminal condition $\xi$ is in $L^p$ and the generator $f$ satisfies
	\ref{A1},\ref{A2},\ref{A3ge2}, for $p\geq 2$ or \ref{A1}, \ref{A2}, \ref{A3le2}, for $1<p<2$,
	then there exists a unique $\lp$-solution to the BSDE  $(\xi,f)$.
\end{theorem}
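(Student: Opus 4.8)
The plan is to prove uniqueness directly from the monotonicity structure via an \ito-plus-Bihari argument, and then to build existence through a nested sequence of approximations in which the a priori estimates of \autoref{sec:apriori_estimates} supply the uniform control needed to pass to the limit. The same differential inequality underlies both halves of the proof: applied to the difference of two \emph{exact} solutions it gives uniqueness, and applied to differences of \emph{approximate} solutions it produces the Cauchy property that yields existence.

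For uniqueness, suppose $(Y,Z,U)$ and $(Y',Z',U')$ are two $\lp$-solutions and write $\delta Y = Y-Y'$, $\delta Z = Z-Z'$, $\delta U = U-U'$. I would apply the \ito formula to $|\delta Y_t|^p$ when $p\ge 2$, and to the regularized functional $(|\delta Y_t|^2+\varepsilon)^{p/2}$ with $\varepsilon\to 0$ when $1<p<2$, to cope with the lack of $C^2$-smoothness of $y\mapsto|y|^p$ at the origin. The finite-variation part produces the term $|\delta Y_s|^{p-2}\innerproduct{\delta Y_s}{f(s,Y_s,Z_s,U_s)-f(s,Y'_s,Z'_s,U'_s)}$, dominated by \eqref{eq:A3ge2inequality} (resp.\ \eqref{eq:A3le2inequality}); the Lipschitz-type contributions $\beta|\delta Y|^{p-1}(|\delta Z|+\|\delta U\|)$ are absorbed, via Young's inequality, into the nonnegative quadratic bracket terms $|\delta Y|^{p-2}|\delta Z|^2$ and the jump analogue that also arise from \ito's formula. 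After localizing and taking expectations (justified by the a priori $\lp$-bounds), one is left with an integral inequality of the form $g(t)\le \int_t^T\bigl(\alpha(s)\rho(g(s))+\mu(s)g(s)\bigr)\,ds$ for $g(t)=\E\sup_{s\in[t,T]}|\delta Y_s|^p$ with vanishing terminal data. The Osgood condition $\int_{0^+}\frac{1}{\rho(x)}\,dx=\infty$ and the Bihari--LaSalle inequality then force $g\equiv 0$, so $Y=Y'$, and feeding this back into the martingale parts gives $Z=Z'$ and $U=U'$.

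For existence I would remove restrictions one layer at a time. First, for a generator genuinely Lipschitz in $(y,z,u)$, a bounded terminal condition, and the finite-\levy-measure process $X^n$ of \autoref{sec:finlev}, existence and uniqueness of an $\lp$-solution in $\mathcal{S}^p\times L^p(W)\times L^p(\tilde N)$ follow from a standard contraction argument. Next I would replace the monotone-in-$y$ generator by Lipschitz approximations (via truncation and mollification in $y$, in the spirit of \cite{briand2003lp}), solve each regularized BSDE, and pass to the limit; the monotonicity-based estimate is uniform in the regularization parameter precisely because the bound \eqref{eq:A3ge2inequality} is preserved. A further truncation of $f$ in $|y|$ at level $r$ removes the general growth, with bounds made uniform in $r$ through \ref{A2}. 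Finally the finite-measure restriction is lifted by letting $n\to\infty$ in $X^n$, and the boundedness of the data is removed by approximating $\xi$ in $\lp$ and $\fzero$ in $L^p(\Omega;L^1([0,T]))$. In each passage the differences of approximate solutions are shown to be Cauchy in $\mathcal{S}^p\times L^p(W)\times L^p(\tilde N)$ by the \ito-plus-Bihari estimate above; continuity of $f$ together with the growth control \ref{A2} then identifies the limit of the driver terms and verifies that the limiting triplet solves $(\xi,f)$.

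The principal obstacle, as flagged in the introduction, is the control of the jump component $U$ under the compensated Poisson integral. Two points require care. First, bridging from finite to infinite \levy measure demands $\|U^n-U^m\|_{L^p(\tilde N)}\to 0$, which rests on the jump-bracket terms from \ito's formula and on the Lipschitz dependence of $f$ on $u$ (furnished by the limit assumptions in \ref{A3ge2}/\ref{A3le2}, cf.\ \autoref{remA3}), together with retaining the predictable version of $U$ in the limit. Second, the regime $1<p<2$ is genuinely more delicate: the \ito expansion must be carried out on the mollified functional, and the jump terms estimated through the elementary inequalities of \autoref{remA3}\,\ref{remA3:A3bounds}; the payoff, and one of this paper's contributions, is that the resulting solution lies in the ordinary spaces $\mathcal{S}^p\times L^p(W)\times L^p(\tilde N)$ rather than in the enlarged spaces of \cite{Kruse17}.
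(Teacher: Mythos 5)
Your uniqueness argument is sound and is in substance identical to the paper's: the paper simply packages your It\^o-plus-Bihari computation as the a priori estimates (\autoref{supprop}, \autoref{supproppleq2}) and applies them to the difference BSDE $(0,g)$ with $g(t,y,z,u)=f(t,y+Y',z+Z',u+U')-f(t,Y',Z',U')$. The existence half, however, has a genuine gap at its central point. You write that the truncation of $f$ in $|y|$ at level $r$ is removed ``with bounds made uniform in $r$ through \ref{A2}.'' This cannot work: the dominating process $\psi_r$ in \ref{A2} grows with $r$, and under \emph{general} growth in $y$ there is no estimate uniform over truncation levels --- this is precisely the difficulty the theorem addresses. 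The mechanism your sketch is missing is an $L^\infty$ a priori bound on $Y$: for bounded data $\xi,\Ifzero\in L^\infty$, \autoref{prop:stability} (obtained by rerunning the $\mathcal{S}^p$-estimates with conditional expectations $\E[\,\cdot\,|\mathcal{F}_t]$) yields $|Y_t|\leq C$ $\mathbb{P}$-a.s.\ with $C$ depending only on the data. One then fixes a \emph{single} $r>C$ once and for all; the cutoff $\theta_r(y)$ is never active on the solution, so the truncated solution already solves the equation with generator untruncated in $y$, and no limit $r\to\infty$ is ever taken. Without this step your program stalls exactly at the general-growth feature; bounded data is only removed at the very end, via $c_n(\xi)$ and $c_n(\fzero)$, as you correctly propose.

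A second, structural misdirection: the paper's existence proof never uses the finite-\levy-measure approximation $X^n$ of \autoref{sec:finlev}; that machinery serves only the comparison theorems. Your base layer --- contraction for Lipschitz generators driven by $X^n$, then $n\to\infty$ --- is either superfluous or gappy. If you solve with respect to the filtration generated by $X^n$, then $\xi$ and the generator are not measurable for it, so you must project them via $\E_n$, and passing to the limit requires exactly the predictable-version and measurability apparatus (\autoref{measurable-version}) that your sketch does not invoke and that the paper deliberately confines to the comparison proof. If instead you stay in the full filtration, the martingale representation theorem holds for infinite $\nu$ and the contraction argument solves the Lipschitz BSDE directly, making the finite-measure layer unnecessary. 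For the monotone-in-$y$ base case the paper does not mollify at all but imports existence from \cite{geiss2018monotonic} (Theorem 3.1, after an exponential transformation absorbing the $\mu_r$-term); your mollification in $y$ would in fact preserve \eqref{eq:A3ge2inequality}, so that detail is salvageable, but as written it reconstructs the cited base theorem rather than replacing the missing boundedness argument. Finally, note that the Cauchy property of the approximating sequence in the paper also needs a uniform-integrability argument for the families $\Phi(|Z^n|+\|U^n\|)$ to kill the generator-difference term and to identify the limit driver (via Severini--Egorov); your appeal to ``continuity of $f$ together with \ref{A2}'' gestures at this but should be made explicit.
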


We will prove this theorem in \autoref{sec:proof_of_lp_existence} after presenting necessary a priori estimates in the next section.

\section{A Priori Estimates and Stability} \label{sec:apriori_estimates}

Throughout the next sections, recall that $\fzerot = f(t,0,0,0)$, and that $\Ifzero$ and $\Kfzero$ are defined as in \eqref{eq:i_f_and_k_f}. 

\begin{remark}\label{rem:weaker_assumption_on_monotonicity}
	\changed{For the results in this section, it suffices to require a weaker condition than \ref{A3ge2}. We define this adapted assumption \ref{a3le2} with the same requirements, except one: We replace the monotonicity condition \eqref{eq:A3ge2inequality} by

	\begin{itemize}
	\item[\namedlabel{a3ge2}{(a3$_{\ge 2}$)}]\begin{align}
	\label{rem:weaker_assumption_on_monotonicity_1}
	&|y|^{p-2} \innerproduct{y}{f(t,y,z,u)} \nonumber \\
	&\quad\leq \alpha(t)|y|^{p-2}\rho(|y|^2)+\mu(t)|y|^p+\beta(t)|y|^{p-1}(|z|+\|u\|)+|y|^{p-1}|f_0(t)|,
	\end{align}
\end{itemize}
and analogously we adapt the assumption \ref{A3le2} to derive the weaker \ref{a3le2} by replacing inequality \eqref{eq:A3le2inequality},
\begin{itemize}
\item[\namedlabel{a3le2}{(a3$_{< 2}$)}]
\begin{align}
	\label{rem:weaker_assumption_on_monotonicity_2}
	&|y|^{p-2} \innerproduct{y}{f(t,y,z,u)} \nonumber \\
	&\quad\leq \alpha(t)\rho(|y|^p)+\mu(t)|y|^p+|y|^{p-1}\left(\beta_1(t)|z|+\beta_2(t)\|u\|\right)+|y|^{p-1}|f_0(t)|,
	\end{align}
	$\mathbb{P}\otimes\lambda\text{-a.e.}$ for all $(y,z,u) \in \Rd \times \Rdk \times L^2(\nu)$.
\end{itemize} }
\end{remark}
		
\begin{lemma}\label{lem_unif_int}
		
%		We remark on a property of the function $x\mapsto \rho(x^2)^\frac{p}{2}$. This will be important to control the processes $Z$ and $U$ by applying the following \autoref{YdeterminesZU} when the process $Y$ tends to zero.
		 
	If a sequence of random variables $\sequence{V}$ in $L^p$ satisfies $\lim_{n\to \infty}\E|V_n|^p=0$, then for a function $\rho$ as in the assumptions, we have
	$$\lim_{n\to\infty}\E \left[ \rho\left(|V_n|^2\right)^{\frac{p}{2}}\right] = 0.$$ 
	
\end{lemma}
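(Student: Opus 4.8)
The plan is to reduce everything to the elementary growth behaviour of a concave function through the origin, combined with the hypothesis $\E|V_n|^p\to 0$. Notably, neither the Osgood condition nor the limit conditions on $\rho$ near $0$ will be used; only that $\rho$ is nondecreasing, continuous, concave with $\rho(0)=0$.

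First I would record the single analytic property of $\rho$ that is needed. Since $\rho$ is nondecreasing, concave and $\rho(0)=0$, the map $x\mapsto \rho(x)/x$ is nonincreasing on $(0,\infty)$, which gives the linear growth bound $\rho(x)\le \rho(1)(1+x)$ for all $x\ge 0$: for $x\ge 1$ use $\rho(x)=\rho(x\cdot 1)\le x\,\rho(1)$ (the same comparison already exploited in \autoref{remA3}), and for $x\le 1$ use monotonicity. Raising to the power $p/2$ yields a constant $C=C_p$ with $\rho(x)^{p/2}\le C(1+x^{p/2})$, so in particular
$$
\rho(|V_n|^2)^{p/2}\le C\bigl(1+|V_n|^p\bigr).
$$

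The core step is a truncation at a level $\delta>0$. On the event $\{|V_n|\le\delta\}$, monotonicity of $\rho$ gives $\rho(|V_n|^2)^{p/2}\le\rho(\delta^2)^{p/2}$, so this part contributes at most $\rho(\delta^2)^{p/2}$, uniformly in $n$. On $\{|V_n|>\delta\}$ I would combine the growth bound with Chebyshev's inequality to obtain
$$
\E\!\left[\rho(|V_n|^2)^{p/2}\mathbf{1}_{\{|V_n|>\delta\}}\right]\le C\,\PP\bigl(|V_n|>\delta\bigr)+C\,\E\!\left[|V_n|^p\mathbf{1}_{\{|V_n|>\delta\}}\right],
$$
and both terms tend to $0$ as $n\to\infty$ for each fixed $\delta$, since $\PP(|V_n|>\delta)\le \delta^{-p}\,\E|V_n|^p\to 0$ and $\E[|V_n|^p\mathbf{1}_{\{|V_n|>\delta\}}]\le\E|V_n|^p\to 0$. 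Taking $\limsup_{n}$ first and then letting $\delta\downarrow 0$, continuity of $\rho$ at $0$ (so that $\rho(\delta^2)^{p/2}\to\rho(0)^{p/2}=0$) closes the argument and forces $\lim_{n}\E[\rho(|V_n|^2)^{p/2}]=0$.

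The same conclusion can be packaged more abstractly via Vitali's theorem: the bound $\rho(|V_n|^2)^{p/2}\le C(1+|V_n|^p)$ shows the family is uniformly integrable, being dominated by the $L^1$-convergent (hence uniformly integrable) family $\{C(1+|V_n|^p)\}$, while $V_n\to 0$ in $L^p$ forces $V_n\to 0$ in probability and therefore $\rho(|V_n|^2)^{p/2}\to 0$ in probability by the continuous mapping theorem. I expect no serious obstacle here; the only point requiring genuine care is to make all estimates uniform in $n$, which the truncation at level $\delta$ achieves.
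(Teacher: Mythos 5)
Your proposal is correct, and it actually contains the paper's proof verbatim in spirit: the paper's entire argument is the one you sketch in your final paragraph, namely the affine bound $\rho(x)\le a+bx$ (from concavity and $\rho(0)=0$), the consequent estimate $\rho(|V_n|^2)^{p/2}\le 2^{p/2-1}\bigl(a^{p/2}+b^{p/2}|V_n|^p\bigr)$, uniform integrability of $(\rho(|V_n|^2)^{p/2})_n$ inherited from the $L^1$-convergent family $(|V_n|^p)_n$, and convergence in probability via continuity of $\rho$ at $0$ — i.e., Vitali's theorem. Your \emph{lead} argument is a genuinely more elementary packaging of the same ingredients: instead of invoking Vitali, you truncate at level $\delta$, bound the small-event part uniformly in $n$ by $\rho(\delta^2)^{p/2}$ via monotonicity, kill the large-event part with Chebyshev and $\E|V_n|^p\to 0$, and then send $\delta\downarrow 0$ using continuity at the origin. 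What the truncation buys is self-containedness and an explicit quantitative $\limsup$ bound; what the paper's (and your alternative) UI route buys is brevity and reuse of a standard convergence theorem. One cosmetic point: when raising $\rho(x)\le\rho(1)(1+x)$ to the power $p/2$ you should distinguish $p\ge 2$ (use $(1+x)^{p/2}\le 2^{p/2-1}(1+x^{p/2})$) from $1<p<2$ (use subadditivity of $t\mapsto t^{p/2}$), as the paper implicitly does with its constant $2^{p/2-1}$; your unspecified constant $C_p$ covers both cases, so this is not a gap.
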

	\begin{proof}
	This follows from the continuity of $\rho$, $\rho(0) = 0$ and the uniform integrability of $(|V_n|^p)_{n\geq 1}$, 
	\begin{align*}
%		\E \left[ \rho\left(|V_n|^2\right)^{\frac{p}{2}}\right]
%		\le \E \left[ \left(a + b |V_n|^2\right)^{\frac{p}{2}}\right]
%		\le 2^{\frac{p}{2}-1} \left( a^{\frac{p}{2}} + b^{\frac{p}{2}} \E|V_n|^p\right)
%		\,,
		  \rho\left(|V_n|^2\right)^{\frac{p}{2}}
		\le   \left(a + b |V_n|^2\right)^{\frac{p}{2}}
		\le 2^{\frac{p}{2}-1} \left( a^{\frac{p}{2}} + b^{\frac{p}{2}} |V_n|^p\right)
		\,,
	\end{align*}
	since $\rho(x)\leq a + bx$ for some $a,b>0$ and the above inequality shows that also $(\rho(|V_n|^2)^{\frac{p}{2}})_{n\geq 1}$ is a uniformly integrable sequence.
\end{proof}

The following two propositions show that the norms of the $Z$ and $U$ processes can be controlled by expressions in $Y$ and $f_0$. Note that the bounds in \autoref{YdeterminesZU} and \autoref{YdeterminesZUple2} differ slightly, so that the application of \autoref{YdeterminesZU} in \autoref{sec:proof_of_lp_existence} needs the assertion of \autoref{lem_unif_int}.

\begin{proposition}\label{YdeterminesZU}
	Let $p \ge 2$ and let $(Y,Z,U)$ be an $\lloc$-solution to the BSDE $(\xi,f)$. If $\xi\in L^p$, $Y \in \mathcal{S}^p$ and \ref{A1} and \changed{\ref{a3ge2}} are satisfied, then $(Y,Z,U)$ is an $\lp$-solution. \bigskip
	
	More precisely, there is a constant $C > 0$ depending on $p,T,\alpha,\mu,\beta$ such that for all $t\in {[0,T]}$,
	\begin{align*}
	&\E\left[\left(\int_t^{T}|Z_s|^2ds\right)^\frac{p}{2}\right]+\E\left[\left(\int_t^{T}\|U_s\|^2 ds\right)^\frac{p}{2}\right]\\
	&\leq C\left(\E\left[\sup_{s\in{[t,T]}}|Y_s|^p\right]+
	\E\left[ \rho\left(\sup_{s\in{[t,T]}}|Y_s|^2\right)^{\frac{p}{2}}\right]+\E \left[\left(\int_t^T|\fzero(s)|ds\right)^p\right]\right).
	\end{align*}
	
\end{proposition}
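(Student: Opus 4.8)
The plan is to apply the \ito formula to $s\mapsto|Y_s|^2$ on $[t,T]$ in order to turn the BSDE into an identity for the quantity $\Gamma_t:=\int_t^T|Z_s|^2ds+\int_t^T\int_{\Rzerod}|U_s(x)|^2\nu(dx)ds$ that we wish to bound, and then to estimate every remaining term so that the powers of $\Gamma_t$ on the right can be absorbed into the left. I first record the form of \ref{a3ge2} that will be used: multiplying inequality \eqref{rem:weaker_assumption_on_monotonicity_1} by $|y|^{2-p}$ (the estimate being trivial where $y=0$, since every term carries a positive power of $|y|$) removes the weight and yields the pointwise bound
\begin{align*}
\innerproduct{y}{f(t,y,z,u)}\le \alpha(t)\rho(|y|^2)+\mu(t)|y|^2+\beta(t)|y|(|z|+\|u\|)+|y|\,|\fzero(t)| \,.
\end{align*}
The \ito formula, after rewriting $\int\!\int|U|^2\,N$ through its $\nu$-compensated version, gives
\begin{align*}
|Y_t|^2+\Gamma_t=|\xi|^2+2\int_t^T\innerproduct{Y_s}{\fsyzu}\,ds-\mathcal{M}_t \,,
\end{align*}
where $\mathcal{M}_t$ collects the three local-martingale terms $2\int_t^T\innerproduct{Y_{s-}}{Z_sdW_s}$, $2\int_{]t,T]\times\Rzerod}\innerproduct{Y_{s-}}{U_s(x)}\tilde N(ds,dx)$ and $\int_{]t,T]\times\Rzerod}|U_s(x)|^2\tilde N(ds,dx)$.

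Dropping $-|Y_t|^2\le0$ and inserting the bound for $\innerproduct{Y_s}{f}$ gives $\Gamma_t\le\mathcal{R}_t+\mathcal{B}_t-\mathcal{M}_t$, where $\mathcal{R}_t$ gathers the benign drift contributions $|\xi|^2$, $2\int_t^T\alpha(s)\rho(|Y_s|^2)ds$, $2\int_t^T\mu(s)|Y_s|^2ds$ and $2\int_t^T|Y_s|\,|\fzero(s)|ds$, and $\mathcal{B}_t:=2\int_t^T\beta(s)|Y_s|(|Z_s|+\|U_s\|)ds$. Raising to the power $p/2$ and taking expectations, the terms of $\mathcal{R}_t$ reproduce exactly the right-hand side of the claim: $|\xi|\le\sup_{[t,T]}|Y|$; monotonicity of $\rho$ gives $(\int_t^T\alpha\rho(|Y_s|^2)ds)^{p/2}\le\|\alpha\|_{L^1([0,T])}^{p/2}\rho(\sup_{[t,T]}|Y_s|^2)^{p/2}$; the $\mu$-term is bounded by $(\int_0^T\mu\,ds)^{p/2}\sup_{[t,T]}|Y_s|^p$; and $(\int_t^T|Y_s||\fzero|ds)^{p/2}\le\sup_{[t,T]}|Y_s|^{p/2}(\int_t^T|\fzero(s)|ds)^{p/2}$, split by Young into $\sup|Y|^p$ and $(\int_t^T|\fzero(s)|ds)^p$. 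For $\mathcal{B}_t$, Cauchy--Schwarz gives $\mathcal{B}_t\le2(\int_0^T\beta^2ds)^{1/2}\sup_{[t,T]}|Y_s|\,\Gamma_t^{1/2}$, whence $\mathcal{B}_t^{p/2}\le\varepsilon\Gamma_t^{p/2}+c_\varepsilon(\int_0^T\beta^2ds)^{p/2}\sup_{[t,T]}|Y_s|^p$; here $\int_0^T\beta^2ds$ is essentially bounded, since by \autoref{remA3} the process $\beta$ may be taken as the process $\Phi$ of \ref{A2}, and the (essentially bounded) time-integrals of $\mu$ and $\beta^2$ enter the constant $C$.

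The martingale part is controlled by the Burkholder--Davis--Gundy inequality. For the Brownian piece, $\E[|\int_t^T\innerproduct{Y_{s-}}{Z_sdW_s}|^{p/2}]\le c\,\E[(\int_t^T|Y_s|^2|Z_s|^2ds)^{p/4}]\le c\,\E[\sup_{[t,T]}|Y|^{p/2}\Gamma_t^{p/4}]$, which Young splits into $\varepsilon\Gamma_t^{p/2}+c_\varepsilon\sup|Y|^p$. The two jump pieces are handled through their optional quadratic variations $\int\!\int|\innerproduct{Y_{s-}}{U_s}|^2N$ and $\int\!\int|U_s|^4N$; using $\sup_s|\Delta Y_s|\le2\sup_{[t,T]}|Y|$ one gets $\int\!\int|U_s|^4N\le4\sup_{[t,T]}|Y|^2\int\!\int|U_s|^2N$, so that both reduce to the form $\E[\sup|Y|^{p/2}(\int\!\int|U_s|^2N)^{p/4}]$. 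Writing $\int\!\int|U|^2N$ as the $U$-part of $\Gamma_t$ plus a further martingale, one more BDG/Young/Cauchy--Schwarz step closes these terms into $\varepsilon\Gamma_t^{p/2}+c_\varepsilon\sup|Y|^p$ as well.

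Collecting everything yields $\E[\Gamma_t^{p/2}]\le C\varepsilon\,\E[\Gamma_t^{p/2}]+C\big(\E[\sup_{[t,T]}|Y|^p]+\E[\rho(\sup_{[t,T]}|Y|^2)^{p/2}]+\E[(\int_t^T|\fzero(s)|ds)^p]\big)$. The main obstacle is to make this absorption rigorous, since a priori $\Gamma_t$ is only $\PP$-a.s. finite (we start from an $\lloc$-solution): I therefore localize by $\tau_n:=\inf\{s\ge t:\int_t^s|Z|^2+\int_t^s\int|U|^2\nu\ge n\}\wedge T$, on which $\Gamma$ is bounded, every stopped martingale has vanishing expectation, and the whole chain holds with finite left-hand side. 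Choosing $\varepsilon$ with $C\varepsilon\le1/2$ and absorbing, then letting $n\to\infty$ by monotone convergence --- the right-hand side being finite and $n$-independent because $Y\in\mathcal{S}^p$ --- gives $\E[\Gamma_t^{p/2}]<\infty$ together with the stated estimate; in particular $Z\in L^p(W)$ and $U\in L^p(\tilde N)$, so $(Y,Z,U)$ is an $\lp$-solution. The genuinely delicate points are precisely this localization/absorption and the bookkeeping of the jump-bracket martingale terms; the random coefficients $\mu,\beta$ cause no difficulty once their time-integrals are bounded, the general case reducing to this by the same localization.
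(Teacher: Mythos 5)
Your proposal is correct in substance and reaches the stated estimate by the same overall scheme (It\^o for $|Y|^2$, the weak monotonicity \ref{a3ge2}, power $p/2$, localization, Young absorption), but it diverges from the paper's proof at the one genuinely delicate point, the jump martingale. The paper keeps the jump term as the single martingale $\int(|Y_{s-}+U_s(x)|^2-|Y_{s-}|^2)\tilde N(ds,dx)$ and applies the Marinelli--R\"ockner maximal inequality \cite[Theorem 3.2]{MarinRoeck}: for $2\le p\le 4$ the exponent $p'=p/2\le 2$ permits a bound by the \emph{predictable} compensator $\bigl(\int\!\!\int|g|^2\nu\,ds\bigr)^{p/4}$ alone, while for $p>4$ an extra term $\int\!\!\int|g|^{p/2}\nu\,ds$ appears, forcing the paper's Case 2 with the Young pair $\bigl(\tfrac p2,\tfrac p{p-2}\bigr)$. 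You instead decompose the jump term into $2\int\!\!\int\langle Y_{s-},U\rangle\tilde N$ and $\int\!\!\int|U|^2\tilde N$ and use classical BDG with the \emph{optional} bracket, together with the jump-size bound $|U_s(x)|\le 2\sup|Y|$ (which holds $N$-a.e.\ trivially, since $U_s(\Delta X_s)=\Delta Y_s$ at jump times, and $\nu\otimes\lambda$-a.e.\ by \cite[Corollary 1]{morlais2009utility}, the fact the paper also invokes), reducing everything to $\E\bigl[\sup|Y|^{p/2}(\int\!\!\int|U|^2N)^{p/4}\bigr]$ and then re-expanding $\int\!\!\int|U|^2N$ as compensator plus martingale with one further BDG/Young pass. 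This buys uniformity: your argument needs no case split in $p\ge 2$, since BDG with $[M]$ is available at every exponent $p/2\ge 1$; the price is a self-referential absorption that the paper's compensator-based inequality avoids.

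That absorption is where your proof has a genuine, though routine, gap. Your stopping times $\tau_n$ bound the compensated quantity $\Gamma$ (so $\E\Gamma_{\cdot\wedge\tau_n}^{p/2}\le n^{p/2}$ and absorbing $\varepsilon\E\Gamma^{p/2}$ is legitimate), but they do \emph{not} bound the uncompensated integral $\int\!\!\int|U|^2N$. In the loop for $M':=\int\!\!\int|U|^2\tilde N$ you arrive at an inequality of the form $B\le C_1\E[\sup|Y|^{p/2}\Gamma^{p/4}]+C_1R\,\E\sup|Y|^p+\tfrac{C_1}{R}B$ with $B:=\E\sup|M'|^{p/2}$, and you may only subtract $\tfrac{C_1}{R}B$ if $B$ is known finite a priori --- which $\tau_n$ does not give you. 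The standard fix is a second localization $\sigma_m:=\inf\{s:|M'_s|\ge m\}$: since the jumps of $M'$ are $|U|^2\le 4\sup|Y|^2\in L^{p/2}$, one has $\sup_s|M'_{s\wedge\sigma_m}|\le m+4\sup|Y|^2\in L^{p/2}$, the absorbed estimate holds with a bound independent of $m$, and Fatou lets $m\to\infty$ before $n\to\infty$. With this patch your argument is complete. One further remark: your treatment of the random coefficients coincides with the paper's practice --- the proof in the paper likewise pulls $\int_0^T\mu\,ds$ and $\int_0^T\beta^2\,ds$ out as essentially bounded (compare also the requirement $\eta\in L^\infty(\Omega;L^1([0,T]))$ in \autoref{supprop}), so these bounds simply enter the constant $C$; but your closing claim that the fully general case ``reduces to this by the same localization'' is not correct as stated, since stopping at $\inf\{s:\int_0^s(\mu+\beta^2)\,dr\ge m\}$ produces constants depending on $m$ that cannot be removed in the limit.
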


\begin{proof}
	This proof generalizes the arguments in \cite[Lemma 3.1]{briand2003lp}.
	
{\bf Step 1:}\\	
	For $t\in{[0,T]}$ and $n \geq 1$ define the stopping times
	\begin{align*}
	\tau_n:=\inf\left\{s\in [t,T]: \int_t^{s}|Z_s|^2ds\geq n\right\}\wedge \inf\left\{s\in{[t,T]}: \int_t^{s}\|U_s\|^2 ds\geq n\right\}.
	\end{align*}
	It\^o's formula implies
	\changed{
	\begin{align}\label{eq:ito_after_stopping_time}
	&|Y_t|^2+\int_t^{\tau_n}|Z_s|^2ds+\int_t^{\tau_n}\|U_s\|^2 ds=|Y_{\tau_n}|^2+2\int_t^{\tau_n}
	\tempinnerproduct{Y_s}{f(s,Y_s,Z_s,U_s)}ds \nonumber\\
	&-2\int_t^{\tau_n} 
	\tempinnerproduct{Y_s}{Z_sdW_s}
	-\int_{{]t,\tau_n]}\times\Rzerod}\left(|Y_{s-}+U_s(x)|^2-|Y_{s-}|^2\right)\tilde{N}(ds,dx),	
	\end{align}
}
	from which we infer by \changed{\ref{a3ge2}} that
	\begin{align*}
	&\int_t^{\tau_n}|Z_s|^2ds+\int_t^{\tau_n}\|U_s\|^2 ds\leq|Y_{\tau_n}|^2+2\int_t^{\tau_n}\left(\alpha(s)\rho(|Y_s|^2)+\mu(s)|Y_s|^2\right)ds\\
	&+\int_t^{\tau_n}\beta(s)|Y_s|(|Z_s|+\|U_s\|)ds+2\int_t^{\tau_n}|Y_s||\fzero(s)| ds\\
	&+2\left|\int_t^{\tau_n} \tempinnerproduct{Y_s}{Z_sdW_s}\right|+\left|\int_{{]t,\tau_n]}\times\Rzerod}\left(|Y_{s-}+U_s(x)|^2-|Y_{s-}|^2\right)\tilde{N}(ds,dx)\right|.
	\end{align*}
	Taking the power $\frac{p}{2}$, we find a constant $\czero > 0$ such that
	\begin{align*}
	&\left[\int_t^{\tau_n}|Z_s|^2ds\right]^\frac{p}{2}+\left[\int_t^{\tau_n}\|U_s\|^2 ds\right]^\frac{p}{2}\leq \czero\biggl(|Y_{\tau_n}|^p+\left[\int_t^{\tau_n}\left(\alpha(s)\rho(|Y_s|^2)+\mu(s)|Y_s|^2\right)ds\right]^\frac{p}{2}\\
	&+\left[\int_t^{\tau_n}\beta(s)|Y_s|(|Z_s|+\|U_s\|)ds\right]^\frac{p}{2}+\left[\int_t^{\tau_n}|Y_s||\fzero(s)|ds\right]^\frac{p}{2}\\
	&+\left|\int_t^{\tau_n} 
	\tempinnerproduct{Y_s}{Z_sdW_s}
	\right|^\frac{p}{2}+\left|\int_{{]t,\tau_n]}\times\Rzerod}\left(|Y_{s-}+U_s(x)|^2-|Y_{s-}|^2\right)\tilde{N}(ds,dx)\right|^\frac{p}{2}\biggr).
	\end{align*}
	We continue our estimate (with another constant $c_1 > 0$)	
	\begin{align}\label{eq:propBeforeCases}
	&\left[\int_t^{\tau_n}|Z_s|^2ds\right]^\frac{p}{2}+\left[\int_t^{\tau_n}\|U_s\|^2 ds\right]^\frac{p}{2} \nonumber\\
	&\leq c_1\Biggl(\sup_{s\in{[t,T]}}|Y_s|^p+\left[\int_t^T  \alpha(s)ds \,   \right]^\frac{p}{2}\rho\left(\sup_{s\in [t,T]}|Y_s|^{2}\right)^{\frac{p}{2}}
	+\left[\int_t^T\mu(s)ds\right]^\frac{p}{2}\sup_{s\in{[t,T]}}|Y_s|^p \nonumber\\
	&+\left[\int_t^{\tau_n}\beta(s)|Y_s|(|Z_s|+\|U_s\|)ds\right]^\frac{p}{2}+\left[\int_t^{\tau_n}|\fzero(s)|ds\right]^\frac{p}{2}\sup_{s\in{[t,T]}}|Y_s|^\frac{p}{2} \nonumber\\
	&+\left|\int_t^{\tau_n} 
	\tempinnerproduct{Y_s}{Z_sdW_s}
	\right|^\frac{p}{2}+\left|\int_{{]t,\tau_n]}\times\Rzerod}\left(|Y_{s-}+U_s(x)|^2-|Y_{s-}|^2\right)\tilde{N}(ds,dx)\right|^\frac{p}{2}\Biggr).
	\end{align}
	
	To estimate the above further, we have to split up the range of values of $p\geq 2$.\medskip
	
	\textbf{Case 1:} $2 \le p \le 4$

	We use the following inequality given e.g. in \cite[Theorem 3.2]{MarinRoeck}, which states that for a local martingale $M$, given by $M(t)=\int_{{]0,t]}\times\Rzerod} g_s(x)\tilde{N}(ds,dx), t\in [0,T]$, there exists $\ctwo > 0$ such that the following inequality holds for $p'\in {]0,2]}$:
	$$\E\sup_{t\in [0,T]}\left|M_t\right|^{p'}\leq \ctwo\E\left[\left(\int_0^T\int_{\Rzerod} |g_s|^2\nu(dx)ds\right)^\frac{p'}{2}\right].$$
	Here, we will apply this inequality for $p' = p/2$ to the martingale 
	$$s\mapsto \int_{{]t,s\wedge \tau_n]}\times\Rzerod}\left(|Y_{s-}+U_s(x)|^2-|Y_{s-}|^2\right)\tilde{N}(ds,dx).$$
	Note that we can estimate the square of the above integrand $\mathbb{P}\otimes\lambda\otimes\nu$-a.e. by
	\begin{align}\label{eq:bound_U_by_2_Y}
	\left(|Y_{s-}+ U_s(x)|+|Y_{s-}|\right)^2 \left(|Y_{s-}+ U_s(x)|-|Y_{s-}|\right)^2  \le 16 \sup_{r \in [t,T]} |Y_r|^2  \,|U_s(x)|^2, 
	\end{align}
	since for all $s \in [t,T]$ we can bound the jump sizes 
	$|U_s(x)|$ by $2 \sup_{r \in [t,T]} |Y_r|$, $\mathbb{P}\otimes\lambda\otimes\nu$-a.e. (see \cite[Corollary 1]{morlais2009utility}) and since $\big||Y_{s-}+ U_s(x)|-|Y_{s-}|\big| \le |U_s(x)|$.
	
	We take suprema and expectations to get a constant $\cthree > 0
%	=\cthree(p,\alpha,\mu)
	$ such that
	\begin{align*}
	&\E\left[\int_t^{\tau_n}|Z_s|^2ds\right]^\frac{p}{2}+\E\left[\int_t^{\tau_n}\|U_s\|^2 ds\right]^\frac{p}{2}\\
	&\leq \cthree\Biggl(\E\left[\sup_{s\in{[t,T]}}|Y_s|^p\right]+\E\left[\rho\left(\sup_{s\in [t,T]}|Y_s|^{2}\right)^{\frac{p}{2}}\right]\\
	&+\E\left[\int_t^{\tau_n}\beta(s)|Y_s|(|Z_s|+\|U_s\|)ds\right]^\frac{p}{2}+\E\left[\left(\int_t^{\tau_n}|\fzero(s)|ds\right)^\frac{p}{2}\sup_{s\in{[t,T]}}|Y_s|^\frac{p}{2}\right]\\
	&+\E\left[\int_t^{\tau_n}|Y_s|^2|Z_s|^2ds\right]^\frac{p}{4}+\E\left[\int_t^{\tau_n}\sup_{r\in{[t,T]}}|Y_r|^2\|U_s\|^2ds\right]^\frac{p}{4}\Biggr).
	\end{align*}
	
	Young's inequality (see \autoref{young_inequality} in the Appendix) now gives us for an arbitrary $R>0$,
	\begin{align*}
	&\E\left[\int_t^{\tau_n}|Z_s|^2ds\right]^\frac{p}{2}+\E\left[\int_t^{\tau_n}\|U_s\|^2 ds\right]^\frac{p}{2}\\
	&\leq \cthree\Biggl(\E\left[\sup_{s\in{[t,T]}}|Y_s|^p\right]+\E\left[\rho\left(\sup_{s\in [t,T]}|Y_s|^{2}\right)^{\frac{p}{2}}\right]\\
	&+\E\left[\int_t^{\tau_n}\frac{R}{2}\beta(s)^2|Y_s|^2ds\right]^\frac{p}{2}+\frac{1}{(2R)^\frac{p}{2}}\left(\E\left[\int_t^{\tau_n}|Z_s|^2ds\right]^\frac{p}{2}+\E\left[\int_t^{\tau_n}\|U_s\|^2ds\right]^\frac{p}{2}\right)\\
	&+\frac{1}{2}\E\left[\int_t^{\tau_n}|\fzero(s)|ds\right]^p+\frac{1}{2}\E\sup_{s\in{[t,T]}}|Y_s|^p\\
	&+2\left(\frac{R}{2}\right)^\frac{p}{2}\E\sup_{s\in{[t,T]}}|Y_s|^p+\frac{1}{(2R)^\frac{p}{2}}\left(\E\left[\int_t^{\tau_n}|Z_s|^2ds\right]^\frac{p}{2}+\E\left[\int_t^{\tau_n}\|U_s\|^2ds\right]^\frac{p}{2}\right)\Biggr).
	\end{align*}
	Choosing now $R$ such that $\frac{2 \cthree}{(2R)^\frac{p}{2}}<1$ yields
	a constant $C > 0
%	=C(p,\alpha,\mu,\beta)
	$ such that
	
	\begin{align*}
	&\E\left[\int_t^{\tau_n}|Z_s|^2ds\right]^\frac{p}{2}+\E\left[\int_t^{\tau_n}\|U_s\|^2 ds\right]^\frac{p}{2}\\
	&\leq C\left(\E\left[\sup_{s\in{[t,T]}}|Y_s|^p\right]+\E\left[\rho\left(\sup_{s\in [t,T]}|Y_s|^{2}\right)^{\frac{p}{2}}\right]+\E \left[\int_t^T|\fzero(s)|ds\right]^p\right).
	\end{align*}
	Taking the limit for $n\to\infty$ shows the assertion for $2 \le p \le 4$.\medskip

	\textbf{Case 2:} $p > 4$\\
	We start from \eqref{eq:propBeforeCases} following the same lines of the previous case. In this case the only difference is:
	\cite[Theorem 3.2]{MarinRoeck} states that for a local martingale $M$, given by $M(t)=\int_{{]0,t]}\times\Rzerod} g_s(x)\tilde{N}(ds,dx)$, $t\in [0,T]$ there exists $\cfour > 0$ such that the following inequality holds for all $p' \ge 2$:
	\begin{align}
	\label{eq:propCase2}
	\E\sup_{t\in [0,T]}\left|M_t\right|^{p'}\leq \cfour\E\left(\left[\int_0^T\int_{\Rzerod} |g_s|^2\nu(dx)ds\right]^\frac{p'}{2} + \int_0^T\int_{\Rzerod} |g_s|^{p'}\nu(dx)ds\right).
 	\end{align}
	For $p' = \frac{p}{2}$, we apply this inequality to the local martingale 
	$$s\mapsto \int_{{]t,s\wedge \tau_n]}\times\Rzerod}\left(|Y_{s-}+U_s(x)|^2-|Y_{s-}|^2\right)\tilde{N}(ds,dx).$$
	
	The first summand of \eqref{eq:propCase2} can be treated as in case 1. We focus on the second term which equals
	\begin{align}\label{eq:prop4squares}
	&\int_t^T\int_{\Rzerod} \left(|Y_{s-}+U_s(x)|^2-|Y_{s-}|^2\right)^{\frac{p}{2}}\nu(dx)ds\nonumber \\
	=& \int_t^T\int_{\Rzerod} \left(|Y_{s-}+U_s(x)|^2-|Y_{s-}|^2\right)^{\frac{p}{2}-2}
	\left(|Y_{s-}+U_s(x)|^2-|Y_{s-}|^2\right)^{2}\nu(dx)ds .	
	\end{align}
	
We can bound the integrands (as explained in \eqref{eq:bound_U_by_2_Y}) by
$$\big(|Y_{s-}+ U_s(x)|+|Y_{s-}|\big) \big(|Y_{s-}+ U_s(x)|-|Y_{s-}|\big)  \le 16 \sup_{r \in [t,T]} |Y_r|^2,$$ 
	and 
	$$\big(|Y_{s-}+ U_s(x)|+|Y_{s-}|\big) \big(|Y_{s-}+ U_s(x)|-|Y_{s-}|\big)  \le 4 \sup_{r \in [t,T]} |Y_r|  \,|U_s(x)|. $$ 
Hence we find a constant $\cfive > 0$, such that \eqref{eq:prop4squares} is smaller than
\changed{
$$
\cfive \int_t^T\int_{\Rzerod} \sup_{r \in [t,T]} |Y_r|^{p-2} |U_s(x)|^{2} \nu(dx)ds 
.
$$

Using Young's inequality for the conjugate couple $(\frac{p}{2}, \frac{p}{p-2})$, we have for arbitrary $R_1 > 0$,}
\begin{align*}
& \int_t^T\int_{\Rzerod} \sup_{r \in [t,T]} |Y_r|^{p-2} |U_s(x)|^{2} \nu(dx)ds 
=  \sup_{r \in [t,T]} |Y_r|^{p-2} \int_t^T\int_{\Rzerod}  |U_s(x)|^{2} \nu(dx)ds \\
\le&
 \left( \frac{p-2}{p} R_1^{\frac{p}{p-2}} \sup_{r \in [t,T]} |Y_r|^{p} + \frac{2}{pR_1^{\frac{p}{2}}}\left[\int_t^T\int_{\Rzerod}  |U_s(x)|^{2} \nu(dx)ds\right]^{\frac{p}{2}}\right).
\end{align*}
From here, similar steps as in case 1 conclude the proof.
\end{proof}

\begin{proposition}\label{YdeterminesZUple2}
	Let $0 < p < 2$ and let $(Y,Z,U)$ be an $\lloc$-solution to the BSDE $(\xi,f)$. If $\xi\in L^p$, $Y \in \mathcal{S}^p$ and \ref{A1} and \changed{\ref{a3le2}} are satisfied, then $(Y,Z,U)$ is an $\lp$-solution. \bigskip 
	
	 More precisely, there is a constant $C$ depending on $p,T,\alpha,\rho(1),\mu,\beta_1,\beta_2$ such that for all $t\in {[0,T]}$,
	\begin{align*}
	&\E\left[\int_t^{T}|Z_s|^2ds\right]^\frac{p}{2}+\E\left[\int_t^{T}\|U_s\|^2 ds\right]^\frac{p}{2}\\
	&\leq C\left(\E\left[\sup_{s\in{[t,T]}}|Y_s|^p\right]+
	\E\left[ \int_t^T \alpha(s) \rho\left(|Y_s|^p\right)ds\right]+\E \left[\int_t^T|\fzero(s)|ds\right]^p\right).
	\end{align*}
	The assertion holds true even if $q=2$ in \changed{\ref{a3le2}} since we do not use a higher integrability condition in the proof. 
\end{proposition}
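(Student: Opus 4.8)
The plan is to mirror the proof of \autoref{YdeterminesZU}, keeping the same stopping times $\tau_n$ and starting from It\^o's formula for $|Y|^2$ on $]t,\tau_n]$, exactly as in \eqref{eq:ito_after_stopping_time}. The essential change lies in how the monotonicity term is treated: multiplying \ref{a3le2} by $|Y_s|^{2-p}$ yields the pointwise bound
\begin{align*}
\innerproduct{Y_s}{f(s,Y_s,Z_s,U_s)} \le \alpha(s)|Y_s|^{2-p}\rho(|Y_s|^p) + \mu(s)|Y_s|^2 + |Y_s|\big(\beta_1(s)|Z_s| + \beta_2(s)\|U_s\|\big) + |Y_s||\fzero(s)|,
\end{align*}
which I substitute into the It\^o identity. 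Crucially, I keep the factor $\rho(|Y_s|^p)$ inside the time integral rather than converting it to a $\rho(|Y_s|^2)$-expression via \autoref{remA3}, since the target estimate is phrased in terms of $\rho(|Y_s|^p)$.

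Next I raise the resulting inequality to the power $p/2$. Since now $p/2 \le 1$, the map $x\mapsto x^{p/2}$ is subadditive, so splitting into single summands is immediate and \emph{no} case distinction in the range of $p$ is required, unlike the $p\ge 2$ situation. Taking suprema over the upper integration limit and then expectations, the two stochastic integrals are controlled by martingale inequalities: the Brownian term by Burkholder--Davis--Gundy, and the jump term by \cite[Theorem 3.2]{MarinRoeck} applied with exponent $p' = p/2 \in\,]0,2]$. Because $p/2 < 2$, only the first (simpler) of the two inequalities there is needed, so the delicate Case~2 estimate of \autoref{YdeterminesZU} never arises. For the jump term I reuse the integrand bound \eqref{eq:bound_U_by_2_Y} together with $|U_s(x)|\le 2\sup_{r\in[t,T]}|Y_r|$ (see \cite{morlais2009utility}), which produces a contribution of the form $\E\big[\sup_{r\in[t,T]}|Y_r|^{p/2}(\int_t^T\|U_s\|^2ds)^{p/4}\big]$.

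The main point is the treatment of the $\rho$-term. I factor out the polynomial part, $\int_t^{\tau_n}\alpha(s)|Y_s|^{2-p}\rho(|Y_s|^p)ds \le \sup_{r\in[t,T]}|Y_r|^{2-p}\int_t^T\alpha(s)\rho(|Y_s|^p)ds$, and after raising to the power $p/2$ I apply Young's inequality with the conjugate pair $\big(\tfrac{2}{2-p},\tfrac{2}{p}\big)$. This is tailored precisely so that the two factors become $\sup_{r\in[t,T]}|Y_r|^p$ and $\int_t^T\alpha(s)\rho(|Y_s|^p)ds$, namely
\begin{align*}
\sup_{r\in[t,T]}|Y_r|^{(2-p)p/2}\Big(\int_t^T\alpha(s)\rho(|Y_s|^p)ds\Big)^{p/2} \le \tfrac{2-p}{2}\sup_{r\in[t,T]}|Y_r|^p + \tfrac{p}{2}\int_t^T\alpha(s)\rho(|Y_s|^p)ds.
\end{align*}
This is exactly what makes the bound differ slightly from \autoref{YdeterminesZU}: the $\rho$-term survives linearly, with no outer power $p/2$, and consequently \autoref{lem_unif_int} is not invoked here. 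I expect this matching-of-exponents step to be the only genuine obstacle; everything else is routine.

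The remaining terms are handled as in the $p\ge 2$ case. The $\mu$-term and the $\beta_1,\beta_2$-terms are split by Young's inequality, their $|Y|$-parts absorbed into $\E\sup_{s\in[t,T]}|Y_s|^p$ using the a.s.\ bounds $\int_0^T(\mu+\beta_1^2+\beta_2^2)ds < C$ (which already hold for $q=2$, whence the closing remark of the statement), while their $|Z|^2$- and $\|U\|^2$-parts, together with the martingale contributions above, are absorbed into the left-hand side by choosing the Young parameters large enough. The $\fzero$-term is split by Young's inequality with exponents $(2,2)$ into $\tfrac12\E\sup_{s\in[t,T]}|Y_s|^p$ and $\tfrac12\E\big(\int_t^T|\fzero(s)|ds\big)^p$. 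Finally, letting $n\to\infty$ and using monotone convergence -- the integrands increase to the full integrals as $\tau_n\nearrow T$, while the right-hand side is bounded uniformly in $n$ -- yields the claimed estimate; since its right-hand side is finite, $(Y,Z,U)$ is indeed an $\lp$-solution.
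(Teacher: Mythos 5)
Your proposal is correct and follows essentially the same route as the paper's proof: starting from \eqref{eq:ito_after_stopping_time}, using \ref{a3le2} multiplied through by $|Y_s|^{2-p}$ so that $\rho(|Y_s|^p)$ is kept as is, raising to the power $p/2$, and then splitting $\sup_{r}|Y_r|^{(2-p)p/2}\bigl(\int_t^{\tau_n}\alpha(s)\rho(|Y_s|^p)ds\bigr)^{p/2}$ by Young's inequality with the conjugate pair $\bigl(\tfrac{2}{2-p},\tfrac{2}{p}\bigr)$, before concluding exactly as in Case 1 of \autoref{YdeterminesZU}. You also correctly identify the two features the paper highlights: the $\rho$-term survives linearly (so \autoref{lem_unif_int} is not needed here), and only square-integrability of $\beta_2$ (i.e.\ $q=2$) is used.
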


\begin{proof}
	\changed{	We proceed as in the proof before until \eqref{eq:ito_after_stopping_time} and then infer by \ref{a3le2} in
	\autoref{rem:weaker_assumption_on_monotonicity}, with $\beta_1+\beta_2=:\beta$, that}
	\begin{align*}
	&\int_t^{\tau_n}|Z_s|^2ds+\int_t^{\tau_n}\|U_s\|^2 ds\leq|Y_{\tau_n}|^2+2\int_t^{\tau_n}\left(\alpha(s)\rho(|Y_s|^p)|Y_s|^{2-p}+\mu(s)|Y_s|^2\right)ds\\
	&+\int_t^{\tau_n}\beta(s)|Y_s|(|Z_s|+\|U_s\|)ds+2\int_t^{\tau_n}|Y_s||\fzero(s)| ds\\
	&+2\left|\int_t^{\tau_n} \tempinnerproduct{Y_s}{Z_sdW_s}\right|+\left|\int_{{]t,\tau_n]}\times\Rzerod}\left(|Y_{s-}+U_s(x)|^2-|Y_{s-}|^2\right)\tilde{N}(ds,dx)\right|.
	\end{align*}
	Taking the power $\frac{p}{2}$, we find a constant $\czero > 0$ such that
	\begin{align*}
	&\left[\int_t^{\tau_n}|Z_s|^2ds\right]^\frac{p}{2}+\left[\int_t^{\tau_n}\|U_s\|^2 ds\right]^\frac{p}{2}\\
	&\leq \czero\Biggl(|Y_{\tau_n}|^p+\left[\sup_{s \in [t,T]}|Y_s|^{2-p} \int_t^{\tau_n}\alpha(s)\rho(|Y_s|^{p})ds+
	\sup_{s \in [t,T]}|Y_s|^2\int_t^{\tau_n}\mu(s)ds\right]^\frac{p}{2}\\
	&\quad+\left[\int_t^{\tau_n}\beta(s)|Y_s|(|Z_s|+\|U_s\|)ds\right]^\frac{p}{2}+\left[\int_t^{\tau_n}|Y_s||\fzero(s)|ds\right]^\frac{p}{2}\\
	&\quad+\left|\int_t^{\tau_n} \tempinnerproduct{Y_s}{Z_sdW_s}\right|^\frac{p}{2}+\left|\int_{{]t,\tau_n]}\times\Rzerod}\left(|Y_{s-}+U_s(x)|^2-|Y_{s-}|^2\right)\tilde{N}(ds,dx)\right|^\frac{p}{2}\Biggr).
	\end{align*}
	
	We estimate further with $\cone > 0$
	\begin{align*}
	&\left[\int_t^{\tau_n}|Z_s|^2ds\right]^\frac{p}{2}+\left[\int_t^{\tau_n}\|U_s\|^2 ds\right]^\frac{p}{2}\leq \cone\Biggl(|Y_{\tau_n}|^p+\sup_{s \in [t,T]}|Y_s|^{\frac{(2-p)p}{2}} \left[\int_t^{\tau_n}\alpha(s)\rho(|Y_s|^{p})ds\right]^{\frac{p}{2}}\\
	&+
	\sup_{s \in [t,T]}|Y_s|^{p}\left[\int_t^{\tau_n}\mu(s)ds\right]^\frac{p}{2} +\left[\int_t^{\tau_n}\beta(s)|Y_s|(|Z_s|+\|U_s\|)ds\right]^\frac{p}{2}+\left[\int_t^{\tau_n}|Y_s||\fzero(s)|ds\right]^\frac{p}{2}\\
	&+\left|\int_t^{\tau_n} \tempinnerproduct{Y_s}{Z_sdW_s}\right|^\frac{p}{2}+\left|\int_{{]t,\tau_n]}\times\Rzerod}\left(|Y_{s-}+U_s(x)|^2-|Y_{s-}|^2\right)\tilde{N}(ds,dx)\right|^\frac{p}{2}\Biggr).
	\end{align*}
	
	With Young's inequality for $(\frac{2}{p},\frac{2}{2-p})$ and a new constant $c_2 > 0$ we get,
	\begin{align*}
	&\left[\int_t^{\tau_n}|Z_s|^2ds\right]^\frac{p}{2}+\left[\int_t^{\tau_n}\|U_s\|^2 ds\right]^\frac{p}{2}\leq c_2\Biggl(|Y_{\tau_n}|^p+\sup_{s \in [t,T]}|Y_s|^{p} +   \int_t^{\tau_n}\alpha(s)\rho(|Y_s|^{p})ds\\
	&+
	 \left[\int_t^{\tau_n}\beta(s)|Y_s|(|Z_s|+\|U_s\|)ds\right]^\frac{p}{2}+\left[\int_t^{\tau_n}|Y_s||\fzero(s)|ds\right]^\frac{p}{2}\\
	&+\left|\int_t^{\tau_n} \tempinnerproduct{Y_s}{Z_sdW_s}\right|^\frac{p}{2}+\left|\int_{{]t,\tau_n]}\times\Rzerod}\left(|Y_{s-}+U_s(x)|^2-|Y_{s-}|^2\right)\tilde{N}(ds,dx)\right|^\frac{p}{2}\Biggr).
	\end{align*}
	
	From here on the proof can be concluded similar to case 1 of \autoref{YdeterminesZU}.
\end{proof}

From the proposition above, we now know how to bound $Z$ and $U$ in terms of $Y$ and $f_0$. For the core of the existence proof later we need to control the $Y$ part of the solution triplet by a bound depending only on $\xi$ and $f$, which we will show in the sequel.

\begin{proposition}\label{supprop}
	Let $p\geq 2$ and let $(Y,Z,U)$ be an $\lp$-solution to the BSDE $(\xi,f)$. If $\xi\in L^p$ and \ref{A1} and \ref{a3ge2} are satisfied,
	then there exists a  function $h:[0,\infty[\to [0,\infty[ $ with
	$h(x)\to 0$ as $x\to 0$ %, depending only on $C_K$ defined in   \eqref{C-K},
	such that
	\begin{align*}
	&\|Y\|^p_{\mathcal{S}^p} 
%	+\left\|Z\right\|_{L^p(W) }^p + \left\|U\right\|_{L^p(\tilde N) }^p 
	\leq h\left(\E|\xi|^p+\E \Ifzero^p\right),
	\end{align*}	
	where $h$ depends on $p, T, \rho, \alpha, \beta, \mu$.
\end{proposition}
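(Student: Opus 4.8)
The plan is to apply \ito's formula to $|Y_t|^p$ (legitimate because $x\mapsto|x|^p$ is $C^2$ for $p\ge2$) and to read off from it an inequality for $\E\sup_r|Y_r|^p$. Using the dynamics $dY_s=-\fsyzu\,ds+Z_s\,dW_s+\int_{\Rzerod}U_s(x)\tilde N(ds,dx)$, \ito's formula on an interval $[r,T]$ yields, besides the terminal term $|\xi|^p$, the drift $p\int_r^T|Y_s|^{p-2}\innerproduct{Y_s}{\fsyzu}\,ds$ and a Brownian/Poisson martingale part $\mathcal M_{[r,T]}$, two \emph{nonnegative} second-order contributions: a Brownian one bounded below by $\tfrac p2\int_r^T|Y_s|^{p-2}|Z_s|^2\,ds$ and a Poisson compensator with integrand $|Y_{s-}+U_s(x)|^p-|Y_{s-}|^p-p|Y_{s-}|^{p-2}\innerproduct{Y_{s-}}{U_s(x)}\ge0$ (by convexity of $|\cdot|^p$). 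I would keep these two terms on the left-hand side, so that they remain available to absorb the $z$- and $u$-cross terms produced in the next step.

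Next, I would estimate the drift by the monotonicity bound \eqref{rem:weaker_assumption_on_monotonicity_1} of \ref{a3ge2}. Its right-hand side splits into four pieces. The genuinely linear terms $\mu(t)|Y|^p$ — together with the $\beta^2|Y|^p$ that Young's inequality generates from $\beta|Y|^{p-1}(|Z|+\|U\|)$ — I would eliminate by the exponential-weight trick: apply \ito instead to $\ietat|Y_t|^p$ with $\eta$ a suitable multiple of $\mu+\beta^2$ (plus $\rho(1)\alpha$, see below), chosen exactly so that the added drift $\eta\,\ietat|Y|^p$ cancels these linear terms; the weight stays bounded by a finite factor $\eck$. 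The quadratic Young-remainders $|Y|^{p-2}|Z|^2$ and $|Y|^{p-2}\|U\|^2$ are absorbed by the second-order terms retained in the first step — for the jump part this uses a quantitative convexity lower bound on the compensator integrand together with the pathwise bound $|U_s(x)|\le2\sup_r|Y_r|$ already exploited in \autoref{YdeterminesZU}. The term $|Y|^{p-1}|\fzero|$ I would treat by Young as $\varepsilon\sup_r|Y_r|^p+C_\varepsilon\Ifzero^p$, using $\int_r^T|\fzero(s)|\,ds\le\Ifzero$. What remains is the Osgood term $\alpha(t)|Y|^{p-2}\rho(|Y|^2)$, which by the estimate $\rho(|Y|^2)|Y|^{p-2}\le\rho(|Y|^p)+\rho(1)|Y|^p$ of Remark~\ref{remA3} becomes $\alpha(t)\rho(|Y|^p)$ (the $\rho(1)|Y|^p$ being folded into the weight).

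Taking suprema over $r\in[t,T]$ and then expectations, I would bound the martingale supremum by the Burkholder--Davis--Gundy inequality; its bound again produces $\sup_r|Y_r|^p$ and $Z,U$-integrals, a small fraction of which is moved back to the left and the remainder controlled exactly as above. Writing $u(t):=\E\sup_{r\in[t,T]}|Y_r|^p$ and using $|Y_s|^p\le\sup_{\tau\in[s,T]}|Y_\tau|^p$, Fubini and Jensen's inequality (concavity of $\rho$, $\alpha$ deterministic) to pull the expectation inside $\rho$, one arrives at a backward integral inequality of Bihari type,
\begin{align*}
u(t)\ \le\ c\big(\E|\xi|^p+\E\,\Ifzero^{p}\big) + c\int_t^T\alpha(s)\,\rho\big(u(s)\big)\,ds.
\end{align*}
The Bihari--LaSalle inequality then gives $u(0)\le h\big(\E|\xi|^p+\E\,\Ifzero^{p}\big)$ with $h(x)=G^{-1}\!\big(G(cx)+c\|\alpha\|_{L^1([0,T])}\big)$ and $G(x)=\int_{x_0}^x\frac{dr}{\rho(r)}$; since the Osgood condition $\int_{0^+}\frac{dr}{\rho(r)}=\infty$ forces $G(cx)\to-\infty$ as $x\to0$, we obtain $h(x)\to0$ as $x\to0$, which is the asserted behaviour, with $h$ depending only on $p,T,\rho,\alpha,\beta,\mu$.

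The main obstacle is the coupled treatment of the $u$-cross term and the Poisson martingale. In contrast to the Brownian part, absorbing $\beta|Y|^{p-1}\|U\|$ into the jump compensator needs a lower bound of the form $|Y_{s-}+U_s|^p-|Y_{s-}|^p-p|Y_{s-}|^{p-2}\innerproduct{Y_{s-}}{U_s}\gtrsim|Y_{s-}|^{p-2}|U_s|^2$ holding uniformly in the jump size, which is delicate for large jumps; this is precisely where the bound $|U_s(x)|\le2\sup_r|Y_r|$ and the jump form of BDG from \autoref{YdeterminesZU} are needed. A secondary subtlety is that $\mu$ and $\beta$ are only pathwise integrable, so the linear terms must be removed through the weight $\ietat$ \emph{before} taking expectations — a post-hoc Gronwall step after integrating would fail, since the randomness of $\mu$ prevents factoring $\E[\mu_s\sup_{[s,T]}|Y|^p]$.
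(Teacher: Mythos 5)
Your proposal is correct and follows essentially the same route as the paper's proof: the weighted It\^o formula for $\ietat|Y_t|^p$ with a weight $\eta$ built from $\mu+\beta^2+\rho(1)\alpha$ (which, as you rightly stress, must be introduced \emph{before} taking expectations because $\mu,\beta$ are random), the second-order Brownian and jump-compensator lower bounds of \cite[Proposition 2]{Kruse} kept on the left to absorb the Young remainders $|Y|^{p-2}|Z|^2$ and $|Y|^{p-2}\|U\|^2$, the inequality of \autoref{remA3}\ref{remA3:A3bounds}\ref{remA3ge2} for the Osgood term, Burkholder--Davis--Gundy together with \cite[Theorem 3.2]{MarinRoeck} for the two martingale suprema, Jensen's inequality via the concavity of $\rho$, and finally the backward Bihari--LaSalle inequality (\autoref{bihari-prop}), whose Osgood hypothesis yields $h(x)\to 0$ as $x\to 0$. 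The only deviations are cosmetic --- you treat $|Y_s|^{p-1}|\fzero(s)|$ by a direct Young step $\varepsilon\sup_r|Y_r|^p+C_\varepsilon\Ifzero^p$ where the paper folds $(p-1)\Kfzero$ into the weight $\eta$, and your phrase ``the remainder controlled exactly as above'' should be read as the paper's two-pass bookkeeping (first close the pure expectation estimate \eqref{eq:apriorige2ZUbound}, where $\E M(t)=0$, then insert it into the supremum estimate with a \emph{large} Young parameter $R$), since absorbing the Burkholder--Davis--Gundy remainder $R\,\E\int_t^T|Y_s|^{p-2}\bigl(|Z_s|^2+\|U_s\|^2\bigr)ds$ directly into the retained second-order terms would require $R$ simultaneously small there and large in front of $\tfrac{1}{R}\E\sup_r|Y_r|^p$, a constraint on fixed constants that need not be satisfiable.
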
\begin{proof}
	
{\bf Step 1:}\\	
	Let $\Psi(y) := |y|^p$ and $\eta$ = $(\eta_t)_{0 \le t \le T} \in L^\infty(\Omega; L^1([0,T]))$ be a progressively measurable, continuous process, which we will determine later. It\^o's formula (see also \cite[Proposition 2]{Kruse}) for $t \in [0,T]$ implies
	\begin{align}	\label{gammaeq1}
 	&\ietat|Y_t|^p + \int_t^T \ietas\Big[\eta(s)|Y_s|^p + \frac{1}{2} \trace(D^2 \Psi(Y_s)Z_sZ_s^*) \Big]  ds + P(t) \nonumber \\
 	&= \ietaT|\xi|^p + \int_t^T \ietas p \tempinnerproduct{Y_s |Y_s|^{p-2}}{\fsyzu} ds + M(t),
	\end{align}
	where $D^2 \Psi$ denotes the Hessian matrix of $\Psi$,
	\changed{\begin{align*}
%	\label{eq:pnuprocessBound}
	P(t) = & \int_t^T \int_{\Rzerod} \ietas\left[ |Y_{s-}+U(s,x)|^p - |Y_{s-}|^p - \tempinnerproduct{U(s,x)}{p Y_{s-}|Y_{s-}|^{p-2}}\right]\nu(dx)ds
	\end{align*}}
	and
	\begin{align*}
	M(t) = & - \int_t^T \ietas p \tempinnerproduct{Y_{s}|Y_{s}|^{p-2}}{Z_s dW_s}\nonumber \\
	&- \int_{]t,T]\times\Rzerod} \ietas\left[ |Y_{s-}+U(s,x)|^p - |Y_{s-}|^p\right]\tilde{N}(ds,dx).
	%	M(t)=&-\int_t^T2e^{\int_0^s\eta(\tau)d\tau}Y_sZ_sdW_s\nonumber\\
	%	&-\int_{{]t,T]}\times\Rzerod}2e^{\int_0^s\eta(\tau)d\tau}Y_s\left((U_s(x)+Y_{s-})^2-U_s(x)^2\right)\tilde{N}(ds,dx).
	\end{align*}
	
	By the argument in \changed{\cite[Proposition 2]{Kruse}} we can use the estimates
	$\trace(D^2 \Psi(y)zz^*) \ge p|y|^{p-2}|z|^2$ and
	$$
	P(t) \ge p(1-p)3^{1-p}\int_t^T \ietas |Y_{s-}|^{p-2} \|U_s\|^2 ds \,,
	$$
	
	leading to
	\begin{align*}%\label{eq:ito_inequality_start}	
		&\ietat|Y_t|^p + \int_t^T \ietas\Big[\eta(s)|Y_s|^p + \frac{1}{2} p(p-1)|Y_s|^{p-2} |Z_s|^2 \Big]  ds \nonumber \\
		&\quad+p(1-p)3^{1-p}\int_t^T \ietas |Y_{s}|^{p-2} \|U_s\|^2 ds  \nonumber  \\
		&\le\ietaT|\xi|^p + \int_t^T \ietas p \tempinnerproduct{Y_s |Y_s|^{p-2}}{\fsyzu }ds +  M(t)		 \,.
	\end{align*}
		
	\changed{Using $c_z = \frac{1}{2}p(1-p)$ and $c_u = p(1-p)3^{1-p}$, \ref{a3ge2}, \autoref{remA3}\ref{remA3:A3bounds}\ref{remA3ge2}, Young's inequality for arbitrary $R_z, R_u > 0$ with the conjugate couple $(2,2)$, and Young's inequality once more for the expression ${|Y_s|^{p-1}|\fzero| = |Y_s|^{p-1}K_{|f_0|}^{(p-1)/p} |\fzero|^{1/p}\Ifzero^{(p-1)/p}}$, (see \eqref{eq:i_f_and_k_f}), and the couple $(\frac{p}{p-1},p)$, we find}
	\begin{align}\label{gammaeq2}
		&e^{\int_0^t\eta(s)ds}{|Y_t|}^p+\int_t^T e^{\int_0^s\eta(\tau)d\tau}\left(\eta(s){|Y_s|}^p +c_z|Y_s|^{p-2}|Z_s|^2 +c_u{|Y_{s}|^{p-2}}\|U_s\|^2\right)ds\nonumber\\
		& \leq  e^{\int_0^T\eta(s)ds}|\xi|^p+\int_t^T\! e^{\int_0^s\eta(\tau)d\tau}p\biggl(
		\alpha(s)\rho(|Y_s|^p) 
		+\left(\alpha(s)\rho(1) + \mu(s)+\frac{R_z+R_u}{2}\beta(s)^2\right)|Y_s|^p \biggr)ds \nonumber \\
		&\quad+\int_t^T e^{\int_0^s\eta(\tau)d\tau}p|Y_s|^{p-2}\left(\frac{|Z_s|^2}{2R_z}+\frac{\|U_s\|^2}{2R_u}\right) ds+
		\int_t^T e^{\int_0^s\eta(\tau)d\tau}  		
		(p-1)|Y_s|^p\Kfzero(s)ds
		  \nonumber\\
		&\quad +\int_t^T e^{\int_0^s\eta(\tau)d\tau} |\fzero(s)| \Ifzero^{p-1}ds+M(t).
	\end{align}
	
	We set
	$R_z = p/c_z$, $R_u = p/c_u$ and $\eta = p(\alpha\rho(1)+ \mu+\beta^2(R_z+R_u)/2) + (p-1)\Kfzero$ leading to	
	\begin{align*}
		&e^{\int_0^t\eta(s)ds}{|Y_t|}^p +
		\int_t^T e^{\int_0^s\eta(\tau)d\tau}\left(\frac{c_z}{2}|Y_s|^{p-2}|Z_s|^2 +\frac{c_u}{2}{|Y_{s}|^{p-2}}\|U_s\|^2\right)ds
		\nonumber\\
		& \leq  e^{\int_0^T\eta(s)ds}|\xi|^p+\int_t^T e^{\int_0^s\eta(\tau)d\tau}p
		\alpha(s)\rho(|Y_s|^p)
		 ds +\int_t^T e^{\int_0^s\eta(\tau)d\tau} |\fzero(s)| \Ifzero^{p-1}ds+M(t).	
	\end{align*}

	Now, we omit $e^{\int_0^t\eta(s)ds}{|Y_t|}^p$ and take expectations,	
	\begin{align*}
	&\E \int_t^T e^{\int_0^s\eta(\tau)d\tau}\left(\frac{c_z}{2}|Y_s|^{p-2}|Z_s|^2 +\frac{c_u}{2}{|Y_{s}|^{p-2}}\|U_s\|^2\right)ds
	\nonumber\\
	& \leq \E e^{\int_0^T\eta(s)ds}|\xi|^p
	+\E\int_t^T e^{\int_0^s\eta(\tau)d\tau}p
	\alpha(s)\rho(|Y_s|^p)
	ds + \E \Ifzero^{p-1} \int_t^T e^{\int_0^s\eta(\tau)d\tau} |\fzero(s)| ds.	
	\end{align*}

Hence, we find a constant $\czero > 0$,
to end the step with
\begin{align}\label{eq:apriorige2ZUbound}
&\E \int_t^T \left(|Y_s|^{p-2}|Z_s|^2 +{|Y_{s}|^{p-2}}\|U_s\|^2\right)ds\leq \czero \left( \E|\xi|^p
+\E\int_t^T
\alpha(s)\rho(|Y_s|^p)
ds  + \E \Ifzero^{p}.	\right).
\end{align}
	{\bf Step 2:}\\	
	\changed{
	We take the same route as in the previous step until \eqref{gammaeq2}, with one difference: we keep the $P(t)$ term, to get
	\begin{align}\label{eq:apriorige2step2}
	&e^{\int_0^t\eta(s)ds}{|Y_t|}^p+\int_t^T e^{\int_0^s\eta(\tau)d\tau}(\eta(s){|Y_s|}^p +c_z|Y_s|^{p-2}|Z_s|^2 )ds \nonumber \\
	& \leq  e^{\int_0^T\eta(s)ds}|\xi|^p+\int_t^T\! e^{\int_0^s\eta(\tau)d\tau}p\biggl(
	\alpha(s)\rho(|Y_s|^p) 
	+\left(\alpha(s)\rho(1) + \mu(s)+\frac{R_z+R_u}{2}\beta(s)^2\right)|Y_s|^p \biggr)ds \nonumber \\
	&\quad+\int_t^T e^{\int_0^s\eta(\tau)d\tau}p|Y_s|^{p-2}\left(\frac{|Z_s|^2}{2R_z}+\frac{\|U_s\|^2}{2R_u}\right) ds+
	\int_t^T e^{\int_0^s\eta(\tau)d\tau}  		
	(p-1)|Y_s|^p\Kfzero(s)ds
	\nonumber\\
	&\quad +\int_t^T e^{\int_0^s\eta(\tau)d\tau} |\fzero(s)| \Ifzero^{p-1}ds+M(t) - P(t).
	\end{align}
	 
	Now we set
	$R_z = p/(2c_z)$, $R_u = 1/2$ and $\eta = p(\alpha\rho(1)+ \mu+\beta^2(R_z+R_u)/2) + (p-1)\Kfzero$. By the choice of a suitable constant $\cone > 0$	

	\begin{align*}
	&e^{\int_0^t\eta(s)ds}{|Y_t|}^p \leq  e^{\int_0^T\eta(s)ds}|\xi|^p+\int_t^T e^{\int_0^s\eta(\tau)d\tau}p\left(
	\alpha(s)\rho(|Y_s|^p) + |Y_s|^{p-2}\|U\|^2 \right)
	ds + \cone  \Ifzero^{p} + M(t) - P(t).	
	\end{align*}
	
	We can rewrite
	\begin{align*}
    M(t) - P(t) = &-\int_{t}^{T} \ietas p \tempinnerproduct{Y_{s}|Y_{s}|^{p-2}}{Z_s dW_s} \\
    & -\int_{]t,T]\times\Rzerod} \ietas\left[ |Y_{s-}+U(s,x)|^p - |Y_{s-}|^p - \tempinnerproduct{U(s,x)}{p Y_{s-}|Y_{s-}|^{p-2}}\right]N(ds,dx)\\
	& - \int_{]t,T]\times\Rzerod} \ietas \tempinnerproduct{U(s,x)}{p Y_{s-}|Y_{s-}|^{p-2}}\tilde{N}(ds,dx) .
	\end{align*}
	By Taylor expansion of $|\cdot|^p$ (see \cite[Proposition 2]{Kruse})
	$$ |Y_{s-}+U(s,x)|^p - |Y_{s-}|^p - \tempinnerproduct{U(s,x)}{p Y_{s-}|Y_{s-}|^{p-2}} \ge 0 .
	$$ With the minus in front, we can omit the integral with respect to $N(ds,dx)$ in \eqref{eq:apriorige2step2}, take suprema and end up with
	
	\begin{align}\label{eq:apriorisupMtfilling}
		\sup_{s\in{[t,T]}}e^{\int_0^s\eta(\tau)d\tau}{|Y_s|}^p &\leq  e^{\int_0^T\eta(s)ds}|\xi|^p+\int_t^T e^{\int_0^s\eta(\tau)d\tau}p\left(
		\alpha(s)\rho(|Y_s|^p) + |Y_s|^{p-2}\|U\|^2 \right)ds + \cone \Ifzero^{p} 
		\nonumber\\
		  & \quad + \sup_{r\in{[t,T]}} 
		  \left|\int_{t}^{r} \ietas p \tempinnerproduct{Y_{s}|Y_{s}|^{p-2}}{Z_s dW_s} \right| \nonumber\\
		  & \quad + \sup_{r\in{[t,T]}} \left|
		   \int_{]t,r]\times\Rzerod} \ietas \tempinnerproduct{U(s,x)}{p Y_{s-}|Y_{s-}|^{p-2}}\tilde{N}(ds,dx)\right|.
	\end{align}
	We proceed by estimating the expectation of these two suprema in the next step}.\bigskip

{\bf Step 3:}\\
	\changed{For the first supremum, we apply the Burkholder-Davis-Gundy inequality (\cite[Theorem 10.36]{HeWangYan}) giving $\ctwo > 0$  and the first line of the following inequality. Then, we pull out $\sup_{s\in{[t,T]}} |Y_s|^{\frac{p}{2}}$ from the $ds$-integral (and the squareroot) and finally use Young's inequality for arbitrary $R > 0$, to estimate
		\begin{align}\label{eq:first_supremum_estimate}
		\E\sup_{r\in [t,T]}\left|\int_{t}^{r} \ietas p \tempinnerproduct{Y_{s}|Y_{s}|^{p-2}}{Z_s dW_s} \right| & \le \ctwo \E \left( \int_t^T (\ietas p |Y_{s}|^{p-1} |Z_s|)^2 ds\right)^{1/2}\nonumber \\
		&\le \ctwo p \E \sup_{s\in{[t,T]}} |Y_s|^{\frac{p}{2}} \left(\int_t^T e^{2\int_0^s\eta(\tau)d\tau} |Y_s|^{p-2} |Z_s|^2 ds\right)^{1/2}\nonumber\\
		&\le \cthree \E \left(\frac{1}{R} \sup_{s\in{[t,T]}} |Y_s|^{p} + R \int_t^T |Y_s|^{p-2}|Z_s|^2 ds \right) ,
		\end{align}
		for another constant $\cthree > 0$.
		
		For the second suprema, we can use \cite[Theorem 3.2]{MarinRoeck} to get $\cfour > 0$ such that
		\begin{align*}
		&\E \sup_{r \in [t,T]}  \left|
		\int_{]t,r]\times\Rzerod} \ietas \tempinnerproduct{U(s,x)}{p Y_{s-}|Y_{s-}|^{p-2}}\tilde{N}(ds,dx)\right| \\
		&\le \cfour \E \left(\int_t^T \int_{\Rzerod} e^{2\int_0^s\eta(\tau)d\tau} \left(|U(s,x)|p |Y_{s}||Y_{s}|^{p-2}\right)^2  \nu(dx)ds\right)^\frac{1}{2} \\ 
		&\le \cfive \E \left(\frac{1}{R} \sup_{s\in{[t,T]}} |Y_s|^{p} + R \int_t^T |Y_s|^{p-2}\|U_s\|^2 ds \right) \,.
		\end{align*}			
		In the last step, we used Young's inequality as above in \eqref{eq:first_supremum_estimate} for some arbitrary $R > 0$ to get the constant $\cfive > 0$.}
	
	{\bf Step 4:}\\
	With the last step's results we continue from \eqref{eq:apriorisupMtfilling} to get a constant $D > 0$ satisfying	
	\begin{align}\label{eq:apriorisupMtdone}
	&\E \sup_{s\in{[t,T]}}e^{\int_0^s\eta(\tau)d\tau}{|Y_s|}^p \leq D \E \Biggl( |\xi|^p+\int_t^T
	\alpha(s)\rho(|Y_s|^p)ds +  \Ifzero^{p} \nonumber\\
	& \quad\quad  +\frac{1}{R} \sup_{s\in{[t,T]}} |Y_s|^{p} + R \int_t^T |Y_s|^{p-2}|Z_s|^2 ds + R\int_t^T |Y_s|^{p-2}\|U_s\|^2 ds \Biggr) .
	\end{align}
	
	We apply inequality \eqref{eq:apriorige2ZUbound} yielding
	\begin{align*}
	&\E \sup_{s\in{[t,T]}}e^{\int_0^s\eta(\tau)d\tau}{|Y_s|}^p \leq D(1+Rk) \E \Biggl( |\xi|^p+\int_t^T 
	\alpha(s)\rho(|Y_s|^p)ds +  \Ifzero^{p}\Biggr) 
%	\\ +&
	+   \frac{D}{R} \E \sup_{s\in{[t,T]}} |Y_s|^{p} .
	\end{align*}
	
	We choose $R = 2D$, which implies that there is $D_1>0$ such that
	\begin{align*}
	&\E \sup_{s\in{[t,T]}}{|Y_s|}^p\leq \E \sup_{s\in{[t,T]}}e^{\int_0^s\eta(\tau)d\tau}{|Y_s|}^p \leq D_1 \Biggl( \E |\xi|^p+  \E \Ifzero^{p}+\int_t^T 
	\alpha(s)\rho(\E\sup_{r\in{[s,T]}}|Y_r|^p)ds \Biggr),
	\end{align*}
	 where we also used the concavity of $\rho$. Now, the Bihari-LaSalle inequality (see  \autoref{bihari-prop} in the Appendix) finishes the proof.
	\end{proof}

\begin{proposition}\label{supproppleq2}
	Let $1<p<2$ and let $(Y,Z,U)$ be an $\lp$-solution to the BSDE $(\xi,f)$. If $\xi\in L^p$ and \ref{A1} and \ref{a3le2} are satisfied,
	then there exists a  function $h:[0,\infty[\to [0,\infty[ $ with
	$h(x)\to 0$ as $x\to 0$ %, depending only on $C_K$ defined in   \eqref{C-K},
	such that
	\begin{align*}
	&\|Y\|^p_{\mathcal{S}^p} 
	+\left\|Z\right\|_{L^p(W) }^p + \left\|U\right\|_{L^p(\tilde N) }^p 
	\leq h\left(\E|\xi|^p+\E \Ifzero^p\right),
	\end{align*}	
	where $h$ depends on $p, T, \rho, \alpha, \beta_1, \beta_2, \mu$.
\end{proposition}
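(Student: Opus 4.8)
The statement couples an $\mathcal S^p$-bound for $Y$ with $L^p$-bounds for $Z$ and $U$, so my first move is to note that \autoref{YdeterminesZUple2} already delivers $\E(\int_t^T|Z_s|^2ds)^{p/2}+\E(\int_t^T\|U_s\|^2ds)^{p/2}$ in terms of $\E\sup_{s}|Y_s|^p$, $\E\int_t^T\alpha(s)\rho(|Y_s|^p)ds$ and $\E\Ifzero^p$. Everything therefore reduces to producing an estimate $\|Y\|_{\mathcal S^p}^p\le h(\E|\xi|^p+\E\Ifzero^p)$; the $Z$- and $U$-parts then follow by re-inserting this into \autoref{YdeterminesZUple2} and using $\rho(0)=0$ to see that the $\rho$-term is again of order $h$. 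The plan for the $Y$-estimate mimics \autoref{supprop}, the essential new feature being that for $1<p<2$ the weight $|Y_s|^{p-2}$ is singular, so the absorption scheme of the $p\ge2$ case (which relied on \eqref{eq:apriorige2ZUbound}, unavailable here) must be replaced.

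I would apply the It\^o formula of \cite[Proposition 2]{Kruse} to $\ietat|Y_t|^p$, valid for all $p>1$, obtaining \eqref{gammaeq1}. Since $p>1$, the map $|\cdot|^p$ is convex, so both the second-order term $\tfrac12\trace(D^2\Psi(Y_s)Z_sZ_s^*)$ and the jump compensator $P(t)$ are nonnegative; being on the left-hand side they may simply be discarded (only $\int_t^T\ietas\eta(s)|Y_s|^pds$ is retained there). Inserting the monotonicity bound \ref{a3le2} keeps $\alpha(s)\rho(|Y_s|^p)$ unchanged, and I would take $\eta=p\mu+(p-1)\Kfzero$: the term $p\mu|Y_s|^p$ is cancelled against $\int\ietas\eta|Y_s|^pds$, while the forcing term is handled by the splitting $p|Y_s|^{p-1}|\fzero|\le(p-1)|Y_s|^p\Kfzero+|\fzero|\Ifzero^{p-1}$ (Young with $(\tfrac{p}{p-1},p)$, as below \eqref{gammaeq2}), whose first summand is also absorbed into $\eta$. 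Because $\int_0^T\Kfzero\,ds=1$ and $\int_0^T\mu\,ds$ is a.s.\ bounded by \ref{a3le2}, the process $\eta$ lies in $L^\infty(\Omega;L^1([0,T]))$ and $e^{\int_0^T\eta}$ is bounded.

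After taking $\sup_{s\in[t,T]}$ there remain the two cross terms $p|Y_s|^{p-1}(\beta_1(s)|Z_s|+\beta_2(s)\|U_s\|)$ and the martingale $M(t)$, and here lies the main obstacle: none of these may be absorbed into a $|Y_s|^{p-2}$-weighted quadratic as for $p\ge2$. Instead I would factor out the full power $\sup_{r\in[t,T]}|Y_r|^{p-1}$ and apply Cauchy--Schwarz together with the a.s.\ bounds on $\int_0^T\beta_1^2\,ds$ and $\int_0^T\beta_2^2\,ds$ (the latter finite by H\"older from \ref{a3le2}), so that each cross term is dominated by a constant times $\sup_r|Y_r|^{p-1}\big((\int_t^T|Z_s|^2ds)^{1/2}+(\int_t^T\|U_s\|^2ds)^{1/2}\big)$; Young's inequality with the couple $(\tfrac{p}{p-1},p)$ then converts this into $\tfrac1R\sup_r|Y_r|^p$ plus $R$ times $(\int_t^T|Z_s|^2ds)^{p/2}$ and $(\int_t^T\|U_s\|^2ds)^{p/2}$. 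For $M(t)$ the Brownian part is treated by Burkholder--Davis--Gundy and the jump part by \cite[Theorem 3.2]{MarinRoeck}, bounding the jump integrand through $\big||Y_{s-}+U_s(x)|^p-|Y_{s-}|^p\big|\le p(|Y_{s-}|^{p-1}|U_s(x)|+|U_s(x)|^p)$ and the pointwise estimate $|U_s(x)|\le2\sup_{r}|Y_r|$ (\cite[Corollary 1]{morlais2009utility}); both terms collapse again to $\sup_r|Y_r|^{p-1}(\int_t^T\|U_s\|^2ds)^{1/2}$ and are dispatched by the same Young step.

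The quadratic integrals $(\int_t^T|Z_s|^2ds)^{p/2}$ and $(\int_t^T\|U_s\|^2ds)^{p/2}$ produced above are exactly what \autoref{YdeterminesZUple2} controls, so substituting that estimate and choosing $R$ large enough to absorb every $\E\sup_r|Y_r|^p$ on the right leaves
\begin{align*}
\E\sup_{s\in[t,T]}|Y_s|^p\le D_1\Big(\E|\xi|^p+\E\Ifzero^p+\int_t^T\alpha(s)\,\rho\big(\E\sup_{r\in[s,T]}|Y_r|^p\big)\,ds\Big),
\end{align*}
where Jensen's inequality and the concavity of $\rho$ have been used to pull the expectation inside $\rho$. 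The Bihari--LaSalle inequality (\autoref{bihari-prop}) then yields $\|Y\|_{\mathcal S^p}^p\le h(\E|\xi|^p+\E\Ifzero^p)$ with $h(x)\to0$ as $x\to0$, and feeding this back into \autoref{YdeterminesZUple2} (noting $\rho(\E\sup_s|Y_s|^p)\to0$) produces the asserted bound on $\|Z\|_{L^p(W)}^p+\|U\|_{L^p(\tilde N)}^p$ as well. The only delicacy beyond \autoref{supprop} is the circular appearance of $\sup_r|Y_r|^p$ inside the $Z,U$-bounds, which is resolved precisely by taking $R$ large before invoking Bihari--LaSalle.
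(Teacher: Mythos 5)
There is a genuine gap at your final absorption step, and it is exactly the difficulty that forces the paper's proof to be structured differently. You factor out $\sup_r|Y_r|^{p-1}$ from the cross terms and the martingale suprema and apply Young with the couple $(\tfrac{p}{p-1},p)$, which produces $\tfrac{1}{R}\,\E\sup_r|Y_r|^p + c\,R^{p-1}\,\E\bigl(\int_t^T|Z_s|^2ds\bigr)^{p/2}$ (and likewise for $U$); you then invoke \autoref{YdeterminesZUple2}, which bounds these unweighted quadratic quantities by $C\bigl(\E\sup_r|Y_r|^p+\E\int_t^T\alpha(s)\rho(|Y_s|^p)ds+\E\Ifzero^p\bigr)$ with a \emph{fixed} constant $C$. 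The right-hand side of your master inequality therefore carries the coefficient $\tfrac{D}{R}+DCR^{p-1}$ in front of $\E\sup_r|Y_r|^p$: enlarging $R$ kills the first summand but inflates the second (since $p>1$), shrinking $R$ does the opposite, and in general no choice of $R$ brings the sum below $1$. So ``taking $R$ large before invoking Bihari--LaSalle'' does not resolve the circularity — it relocates it. This is also why discarding the second-order term and $P(t)$ is not an innocent simplification: the paper keeps $\czero|Y_s|^{p-2}|Z_s|^2\chi_{\{Y_s\neq0\}}$ and the lower bound $P(t)\ge \czero\int\ietas\int(|Y_{s-}|\vee|Y_{s-}+U_s(x)|)^{p-2}|U_s(x)|^2\,\nu(dx)ds$ precisely so that the $\beta_1|Z|$ cross term and \emph{both} martingale suprema can be absorbed with the Young couple $(2,2)$ into these \emph{weighted} quadratics, which \eqref{gammaeq3pleq2} controls by the data plus the single remaining problematic term $\E\int\beta_2(s)|Y_s|^{p-1}\|U_s\|ds$ — no unweighted $(\int|Z|^2ds)^{p/2}$ with a large prefactor ever appears.

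For that one remaining $\beta_2$-term the paper does what you propose (Young, Cauchy--Schwarz, then \autoref{YdeterminesZUple2}), and the feedback of $\E\sup_r|Y_r|^p$ indeed occurs there; it is neutralized not by tuning $R$ but by \emph{smallness of the time interval}: the assumption $\int_0^T\beta_2(s)^q ds\le C$ with $q>2$ — which your outline only ever uses at the level $q=2$ — yields via H\"older in \eqref{eq:gamma4hoelderpleq2} a factor $(T-t)^{\frac{pq}{q-2}}$ multiplying the fed-back $\E\sup_r|Y_r|^p$. Step 3 of the paper then partitions $[0,T]$ into finitely many subintervals on which this factor is small, applies Bihari--LaSalle (\autoref{bihari-prop}) on each, and composes the resulting functions backwards in time to build $\tilde h$ (for deterministic $\beta_2$, \autoref{rem:beta_2_deterministic} explains how $q=2$ suffices by choosing the partition so that $\int_{t_{i-1}}^{t_i}\beta_2(s)^2ds$ is small). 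Your proposal contains neither the essential use of the higher integrability of $\beta_2$ nor any localization in time, so the inequality you state before Bihari--LaSalle cannot be reached as written. The remainder of your outline — reducing the $Z,U$-bounds to the $Y$-bound via \autoref{YdeterminesZUple2}, the choice of $\eta$ absorbing $\mu$ and $\Kfzero$, the splitting $p|Y_s|^{p-1}|\fzero(s)|\le (p-1)|Y_s|^p\Kfzero(s)+|\fzero(s)|\Ifzero^{p-1}$, and concavity of $\rho$ before Bihari--LaSalle — does match the paper.
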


\begin{proof}
{\bf Step 1:}\\
	We begin this proof similarly to the case $p\geq 2$: Let $\eta$ be a progressively measurable process in $L^\infty(\Omega; L^1([0,T]))$, which we will determine later. As carried out in detail in \cite[Proposition 3]{Kruse}, It\^o's formula, applied to the smooth function $u_\varepsilon: x\mapsto (|x|^2+\varepsilon)^\frac{p}{2}$ and taking the limit $\varepsilon\to 0$ implies that for $\czero = \frac{p(p-1)}{2}$ and $t \in [0,T]$,
	\begin{align*}	%\label{gammaeq1pleq2}
 	&\ietat|Y_t|^p + \int_t^T \ietas\Big[\eta(s)|Y_s|^p + \czero|Y_s|^{p-2} |Z_s|^2 \chi_{\{ Y_s \neq 0\}} \Big]  ds + P(t) \nonumber \\
 	&\le M(t) + \ietaT|\xi|^p + \int_t^T \ietas p \tempinnerproduct{Y_s |Y_s|^{p-2}}{\fsyzu} ds,
	\end{align*}
	where 
	\begin{align*}
%	\label{eq:pnuprocessBound}
	P(t) = & \int_t^T \int_{\Rzerod} \ietas\left[ |Y_{s-}+U_s(x)|^p - |Y_{s-}|^p - \tempinnerproduct{U_s(x)}{pY_{s-}|Y_{s-}|^{p-2}}\right]\nu(dx)ds
	\end{align*}
	and
	\begin{align*}
	M(t) = & - \int_t^T \ietas p \tempinnerproduct{Y_{s}|Y_{s}|^{p-2}}{Z_s dW_s}\nonumber \\
	&- \int_{]t,T]\times\Rzerod} \ietas\left[ |Y_{s-}+U_s(x)|^p - |Y_{s-}|^p\right]\tilde{N}(ds,dx).
	%	M(t)=&-\int_t^T2e^{\int_0^s\eta(\tau)d\tau}Y_sZ_sdW_s\nonumber\\
	%	&-\int_{{]t,T]}\times\Rzerod}2e^{\int_0^s\eta(\tau)d\tau}Y_s\left((U_s(x)+Y_{s-})^2-U_s(x)^2\right)\tilde{N}(ds,dx).
	\end{align*}
	The terms slightly differ from \cite[Proposition 3]{Kruse}. The alternative expressions are due to the relation $d\tilde{N}(ds,dx)=dN(ds,dx)-\nu(dx)dt$, used to split up the integrals w.r.t. those random measures accordingly in the limit procedure. We may do this as all relevant integrands appearing in the It\^o formula for $u_\varepsilon$ and in the limit expression yield $\mathbb{P}$-a.s. finite integrals. Their finiteness results from the convexity of $u_\varepsilon$, the boundedness of its second derivative for all $\varepsilon>0$ and from the fact that $|Y_{s-}|\vee|Y_{s-}+U_s(x)|\leq 4\sup_{t\in [0,T]}|Y_t|$.

	By the argument in \cite[Proposition 3]{Kruse} we can use the estimate
	\begin{align*}
	P(t) \ge \czero\!\!\int_t^T\!\! \ietas\!\! \int_{\Rzerod}\!\!\!(|Y_{s-}|\vee|Y_{s-}+U_s(x)|)^{p-2} |U_s(x)|^2\chi_{(|Y_{s-}|\vee|Y_{s-}+U_s(x)|) \neq 0\}}\nu(dx) ds \,
	\end{align*}
	leading to
	\begin{align*}%\label{eq:ito_inequality_startpleq2}	
		&\ietat|Y_t|^p + \int_t^T \ietas\Big[\eta(s)|Y_s|^p + \czero|Y_s|^{p-2} |Z_s|^2\chi_{\{ Y_s \neq 0\}} \Big]  ds \nonumber \\
		&+\czero\int_t^T\! \ietas \int_{\Rzerod}\!(|Y_{s-}|\vee|Y_{s-}+U_s(x)|)^{p-2} |U_s(x)|^2\chi_{(|Y_{s-}|\vee|Y_{s-}+U_s(x)|) \neq 0\}}\nu(dx)ds  \nonumber  \\
		&\le M(t) + \ietaT|\xi|^p + \int_t^T \ietas p \tempinnerproduct{Y_s |Y_s|^{p-2}}{\fsyzu} ds		 \,.
	\end{align*}

Using \ref{a3le2} 
and Young's inequality, we obtain for an arbitrary $R_z > 0$,
	\begin{align}\label{gammaeq2pleq2}
		&e^{\int_0^t\eta(s)ds}{|Y_t|}^p+\int_t^T e^{\int_0^s\eta(\tau)d\tau}\left(\eta(s){|Y_s|}^p +\czero|Y_s|^{p-2}|Z_s|^2\chi_{\{ Y_s \neq 0\}}\right)ds\nonumber \\
		&+\czero\int_t^T \!\ietas\int_{\Rzerod}\!(|Y_{s-}|\vee|Y_{s-}+U_s(x)|)^{p-2} |U_s(x)|^2\chi_{(|Y_{s-}|\vee|Y_{s-}+U_s(x)|) \neq 0\}}\nu(dx)ds\nonumber\\
		& \leq  e^{\int_0^T\eta(s)ds}|\xi|^p+\int_t^T \!e^{\int_0^s\eta(\tau)d\tau}p\biggl(
		\alpha(s)\rho(|Y_s|^p) 
		+\left(\mu(s)\!+\!\frac{R_z \beta_1(s)^2}{2}+(p-1)\Kfzero(s)\!\right)|Y_s|^p\! \biggr)ds \nonumber \\
		&\quad + \int_t^T e^{\int_0^s\eta(\tau)d\tau}\frac{p}{2 R_z}|Y_s|^{p-2}|Z_s|\chi_{\{ Y_s \neq 0\}}ds+\int_t^T e^{\int_0^s\eta(\tau)d\tau}p\beta_2(s)|Y_s|^{p-1}\|U_s\| ds\nonumber\\
		&\quad +
		\int_t^T e^{\int_0^s\eta(\tau)d\tau} |\fzero(s)| \Ifzero^{p-1}ds+M(t).
	\end{align}
We choose $R_z=\frac{p}{\czero}$ and $\eta=p\left(\mu+\frac{R_z \beta_1^2}{2}+(p-1)\Kfzero\right)$, take expectations and omit the first term to arrive at
	\begin{align*}
		%\label{eq:gammaeq3pleq2_below}
		&\frac{\czero}{2}\E\int_t^T e^{\int_0^s\eta(\tau)d\tau}|Y_s|^{p-2}|Z_s|^2\chi_{\{ Y_s \neq 0\}}ds\nonumber \\
		&+\czero\E\int_t^T \!\!\ietas\!\int_{\Rzerod}\!\!\!(|Y_{s-}|\vee|Y_{s-}+U_s(x)|)^{p-2} |U_s(x)|^2 \chi_{(|Y_{s-}|\vee|Y_{s-}+U_s(x)|) \neq 0\}}\nu(dx)ds\nonumber\\
		& \leq  \E e^{\int_0^T\eta(s)ds}|\xi|^p+\E\int_t^T e^{\int_0^s\eta(\tau)d\tau}p
		\alpha(s)\rho(|Y_s|^p) 
		ds \nonumber \\
		&\quad+\E\int_t^T e^{\int_0^s\eta(\tau)d\tau}p\beta_2(s)|Y_s|^{p-1}\|U_s\|ds+
		\E\int_t^T e^{\int_0^s\eta(\tau)d\tau} |\fzero(s)| \Ifzero^{p-1}ds,
	\end{align*}	
	yielding a constant $C>0$ such that
	\begin{align}\label{gammaeq3pleq2}
	&\E\int_t^T |Y_s|^{p-2}|Z_s|^2\chi_{\{ Y_s \neq 0\}}ds\nonumber \\
		&\quad +\E\int_t^T \int_{\Rzerod}(|Y_{s-}|\vee|Y_{s-}+U_s(x)|)^{p-2} |U_s(x)|^2 \chi_{(|Y_{s-}|\vee|Y_{s-}+U_s(x)|) \neq 0\}}\nu(dx)ds\nonumber\\
		& \leq C\biggl(\E |\xi|^p+\E\int_t^T 
		\alpha(s)\rho(|Y_s|^p) 
		ds +\E\int_t^T \beta_2(s)|Y_s|^{p-1}\|U_s\|ds+
		\E \Ifzero^p\biggr).
	\end{align}
	
{\bf Step 2:}\\
In this step we leave the argumentation lines of Kruse and Popier \cite{Kruse,Kruse17} and \changed{Briand et al \cite{briand2003lp}, estimating several terms differently and using the integrability assumptions on $\beta_2$.} 
We start from estimating the suprema of the stochastic integrals appearing in \eqref{gammaeq2pleq2} by similar means as in step 3 of the proof of \autoref{supprop},  \eqref{eq:apriorisupMtfilling} - \eqref{eq:apriorisupMtdone} which yields constants $c,\cone > 0$, such that for an arbitrary $R>0$ we get \changed{
\begin{align}\label{eq:gammaeq2pleq2M}
&\sup_{s\in [t,T]}|M(s)| \leq c\biggl(\frac{1}{R}\E\sup_{s\in{[t,T]}}|Y_s|^p+R\E\int_t^T |Y_s|^{p-2}|Z_s|^2\chi_{\{Y_s\neq 0\}}ds\nonumber\\
&\quad +\E\biggl[\int_t^T\int_{\Rzerod}\left(\left(|Y_{s-}+U_s(x)|\vee|Y_{s-}|\right)^{p-1}|U_s(x)|\right)^2\chi_{(|Y_{s-}|\vee|Y_{s-}+U_s(x)|) \neq 0\}}\nu(dx)ds\biggr]^\frac{1}{2}\biggr)\nonumber\\
&\ \leq c_1\biggl(\frac{1}{R}\E\sup_{s\in{[t,T]}}|Y_s|^p+R\E\int_t^T |Y_s|^{p-2}|Z_s|^2\chi_{\{Y_s\neq 0\}}ds\nonumber\\
&\quad +R\E\int_t^T\int_{\Rzerod}\left(|Y_{s-}+U_s(x)|\vee|Y_{s-}|\right)^{p-2}|U_s(x)|^2\chi_{(|Y_{s-}|\vee|Y_{s-}+U_s(x)|) \neq 0\}}\nu(dx)ds\biggr),
\end{align}}
where again we used Young's inequality as well as that $|Y_{s-}+U_s(x)|\vee|Y_{s-}|\leq 4\sup_{s\in{[t,T]}}|Y(s)|$, $\mathbb{P}\otimes\lambda\otimes\nu$-a.e.

Taking suprema in \eqref{gammaeq2pleq2} with the same choices  $R_z=\frac{p}{c_0}$ and $\eta=p\left(\mu+\frac{R_z \beta_1^2}{2}+(p-1)\Kfzero\right)$ (to cancel out appropriate terms) yields  
\begin{align*}
		&\sup_{s\in{[t,T]}}e^{\int_0^s\eta(\tau)d\tau}{|Y_s|}^p\nonumber \\
		&+\czero\int_t^T \!\ietas\int_{\Rzerod}\!(|Y_{s-}|\vee|Y_{s-}+U_s(x)|)^{p-2} |U_s(x)|^2\chi_{(|Y_{s-}|\vee|Y_{s-}+U_s(x)|) \neq 0\}}\nu(dx)ds\nonumber\\
		& \leq  e^{\int_0^T\eta(s)ds}|\xi|^p+\int_t^T \!e^{\int_0^s\eta(\tau)d\tau}p
		\alpha(s)\rho(|Y_s|^p)ds \nonumber \\
		&\quad +\int_t^T e^{\int_0^s\eta(\tau)d\tau}p\beta_2(s)|Y_s|^{p-1}\|U_s\| ds +
		\int_t^T e^{\int_0^s\eta(\tau)d\tau} |\fzero(s)| \Ifzero^{p-1}ds+\sup_{s\in [t,T]}|M(s)|,
	\end{align*}
where we omitted the remaining positive integral terms $\frac{\czero}{2}\E\int_t^T e^{\int_0^s\eta(\tau)d\tau}|Y_s|^{p-2}|Z_s|^2\chi_{\{ Y_s \neq 0\}}ds$ and 
$$\czero\int_t^T \!\ietas\int_{\Rzerod}\!(|Y_{s-}|\vee|Y_{s-}+U_s(x)|)^{p-2} |U_s(x)|^2\chi_{(|Y_{s-}|\vee|Y_{s-}+U_s(x)|) \neq 0\}}\nu(dx)ds$$
that appear on the left hand side after cancelling the integrals  involving $|Y_s|^{p-2}|Z_s|^2$ and $|Y(s)|^p$ (not the one with $\rho$) on the right hand side of \eqref{gammaeq2pleq2} after substituting $R_z$ and $\eta$.

 Using the above estimate \eqref{eq:gammaeq2pleq2M} for $\sup_{s\in [t,T]}|M(s)|$, we take expectations and come to
\begin{align*}
		&\E \sup_{s\in{[t,T]}}e^{\int_0^s\eta(\tau)d\tau}{|Y_s|}^p \nonumber\\
		& \leq  \E e^{\int_0^T\eta(s)ds}|\xi|^p+\E\int_t^T e^{\int_0^s\eta(\tau)d\tau}p
		\alpha(s)\rho(|Y_s|^p)ds \nonumber \\
		&\quad+\E\int_t^T e^{\int_0^s\eta(\tau)d\tau}p|Y_s|^{p-1}\beta_2(s)\|U_s\|ds+
		\E\int_t^T e^{\int_0^s\eta(\tau)d\tau} |\fzero(s)| \Ifzero^{p-1}ds\\
		&\quad+\frac{\cone}{R}\sup_{s\in{[t,T]}}{|Y_s|}^p+\cone R\biggl(\E\int_t^T |Y_s|^{p-2}|Z_s|^2\chi_{\{ Y_s \neq 0\}}ds\nonumber \\
		&\quad +\E\int_t^T \int_{\Rzerod}(|Y_{s-}|\vee|Y_{s-}+U_s(x)|)^{p-2} |U_s(x)|^2 \chi_{(|Y_{s-}|\vee|Y_{s-}+U_s(x)|) \neq 0\}}\nu(dx)ds\biggr).
	\end{align*}	
Now inequality \eqref{gammaeq3pleq2} can be plugged in for the last parentheses to estimate, for another constant $D>0$,
\begin{align}\label{gammaeq4pleq2}
		&\E \sup_{s\in{[t,T]}}{|Y_s|}^p \leq  D\biggl((1+R)\E |\xi|^p+(1+R)\E\int_t^T 
		\alpha(s)\rho(|Y_s|^p)ds \nonumber \\
		&\quad\quad+(1+R)\E\int_t^T\beta_2(s)|Y_s|^{p-1}\|U_s\|ds+
		(1+R)\E\int_t^T  |\fzero(s)| \Ifzero^{p-1}ds+\frac{1}{R}\E\sup_{s\in{[t,T]}}{|Y_s|}^p\biggr).
	\end{align}	
We focus on the term $\E\int_t^T\beta_2(s)|Y_s|^{p-1}\|U_s\|ds$, which we estimate by 
\begin{align*}
&\E\sup_{s\in{[t,T]}}|Y_s|^{p-1}\int_t^T\beta_2(s)\|U_s\| ds\\
&\leq \frac{\ctwo}{R_1}\E\sup_{s\in{[t,T]}}|Y_s|^p+ \ctwo R_1^{p-1}\E\left[\int_t^T\beta_2(s)\|U_s\| ds\right]^p,
\end{align*}
for $R_1 > 0$ and a constant $\ctwo >0$ coming from Young's inequality for the couple $(p,\frac{p}{p-1})$.
\changed{By the Cauchy-Schwarz inequality we get}  
\begin{align}\label{eq:gamma4hoelderpleq2}
&\E\sup_{s\in{[t,T]}}|Y_s|^{p-1}\int_t^T\beta_2(s)\|U_s\|ds\nonumber\\
&\leq \frac{\ctwo}{R_1}\E\sup_{s\in{[t,T]}}|Y_s|^p+2 \ctwo R_1^{p-1}\E\left[\left(\int_t^T\beta_2(s)^2ds\right)^\frac{p}{2}\left(\int_t^T\|U_s\|^2 ds\right)^\frac{p}{2}\right].
\end{align}
\changed{Now we use the additional integrability of $\beta_2$ with a power $q>2$. For the case $\beta_2 \in L^2([0,T])$ see \autoref{rem:beta_2_deterministic} after the proof. Here in the sequel we treat a non-deterministic $\beta_2$ where higher integrability is needed. It has the only purpose to obtain a factor containing $T-t$ by Hölder's inequality. Indeed, we infer from \eqref{eq:gamma4hoelderpleq2}}
\begin{align*}
&\E\sup_{s\in{[t,T]}}|Y_s|^{p-1}\int_t^T\beta_2(s)\|U_s\|ds\\
&\leq \frac{\ctwo}{R_1}\E\sup_{s\in{[t,T]}}|Y_s|^p+2 \ctwo R_1^{p-1}\E\left[(T-t)^{\frac{pq}{q-2}}\left(\int_t^T\beta_2(s)^qds\right)^\frac{p}{q}\left(\int_t^T\|U_s\|^2 ds\right)^\frac{p}{2}\right].
\end{align*}
Now, by the boundedness of $\int_0^T\beta_2(s)^qds$, and applying  \autoref{YdeterminesZUple2}, we get a constant $D_1>0$ such that
\changed{\begin{align}\label{eq:gammaeq5pleq2}
&\E\sup_{s\in{[t,T]}}|Y_s|^{p-1}\int_t^T\beta_2(s)\|U_s\| ds\\
&\leq \frac{\ctwo}{R_1}\E\sup_{s\in{[t,T]}}|Y_s|^p+D_1 R_1^{p-1}(T-t)^{\frac{pq}{q-2}}\left(\E\sup_{s\in{[t,T]}}|Y_s|^{p}+\E\int_t^T\alpha(s)\rho(|Y(s)|^p)ds+\E \Ifzero^p\right).\nonumber
\end{align}}
\changed{Inserting \eqref{eq:gammaeq5pleq2} into inequality \eqref{gammaeq4pleq2}, we get for a new constant $\tilde{D} > 0$
\begin{align*}
		\E \sup_{s\in{[t,T]}}{|Y_s|}^p &\leq  \tilde{D} \Biggl((1+R) \E |\xi|^p+(1+R)\E\int_t^T 
		\alpha(s)\rho(|Y_s|^p)ds  + (1+R)\E \Ifzero^p \nonumber\\
		&\quad+\left(\frac{1}{R}+\frac{(1+R)\ctwo}{R_1}+(1+R)D_1R_1^{p-1}(T-t)^\frac{pq}{q-2}\right)\E\sup_{s\in{[t,T]}}{|Y_s|}^p\Biggr).
	\end{align*}
	Now, choose $R$ such that $\frac{\tilde{D}}{R}<\frac{1}{2}$, afterwards choose $R_1$ such that $\frac{\tilde{D}(1+R)\ctwo}{R_1}<\frac{1}{4}$. Now, our goal for the next step is to divide $[0,T]$ into small parts in order to make the third term containing $(T-t)$ small too.}
	\bigskip
	
{\bf Step 3:}\\
From here on, let the time interval $[0,T]$ be partitioned into  $0=t_0<t_1<\dotsc<t_n=T$, such that for all $1\leq i\leq n$, \changed{$\tilde{D}(1+R)D_1R_1^{p-1}(t_i-t_{i-1})^\frac{pq}{q-2}<\frac{1}{8}$}. 
Thus, on the interval $[t_{n-1},T]$, we come to a constant $D_2>0$ such that
\begin{align*}
		\E \sup_{s\in{[t,T]}}{|Y_s|}^p \leq  D_2\biggl(\E |\xi|^p+\int_t^T 
		\alpha(s)\rho(\E|Y_s|^p)ds +\E \Ifzero^p\biggr).
\end{align*}
Now, the Bihari-LaSalle inequality (\autoref{bihari-prop}) shows that there is a function $h_n$ such that
\begin{align*}
\E \sup_{s\in{[t_{n-1},T]}}{|Y_s|}^p\leq h_n(\E |\xi|^p+\E \Ifzero^p).
\end{align*}
Performing the same steps as above for the interval $[t_{n-2},t_{n-1}]$, we find a function $h_{n-1}$ such that 
\begin{align*}\E \sup_{s\in{[t_{n-2},t_{n-1}]}}{|Y_s|}^p\leq h_{n-1}(\E |Y_{t_{n-1}}|^p+\E \Ifzero^p)\leq h_{n-1}\bigl(h_n(\E |\xi|^p+\E \Ifzero^p)+\E \Ifzero^p\bigr).
\end{align*}
Iterating the procedure backwards in time, we end up with functions $h_1,\dotsc,h_n$, accumulating to a function $\tilde{h}$, such that
\begin{align*}
\E \sup_{s\in{[0,T]}}{|Y_s|}^p\leq \tilde{h}(\E |\xi|^p+\E \Ifzero^p).
\end{align*}
The bound for $\|Z\|^p_{L^p(W)}+ \|U\|^p_{L^p(\tilde{N})}$ then follows from \autoref{YdeterminesZUple2}, concluding the proof.

\end{proof}
\begin{remark}\label{rem:beta_2_deterministic}
	In step 3, if $\beta_2$ is deterministic, we could impose the weaker condition, namely $\beta_2$ being only square-integrable (instead of in $L^q$, for some $q > 2$). Then we do not need to apply Hölder's inequality to \eqref{eq:gamma4hoelderpleq2} in order to choose the division of $[0,T]$ such that $D_1(t_i-t_{i-1})^\frac{pq}{q-2}$ is small. Instead we choose the partition such that the $\int_{t_{i-1}}^{t_i}\beta_2(s)^2ds$ become sufficiently small.
	\end{remark}

With the technique from the two a priori estimates above in hand, we can now prove another key part for the existence proof: boundedness stability of the $Y$ process, meaning that the solution process $Y$ stays bounded, when the data ($\xi$, $f$) has boundedness properties:

\begin{proposition}\label{prop:stability}
	Let $p>1 $ and $(Y,Z,U)$ be an $\lp$-solution to the BSDE $(\xi,f)$.
	
	If $\xi,\Ifzero \in L^\infty$ and
	\begin{enumerate}[label=({\roman*})]
		\item
		for $p\ge 2$, \ref{A1} and \ref{a3ge2} hold,
		\item
		or for $1 < p < 2$, \ref{A1} and \ref{a3le2} hold, 				
	\end{enumerate}
	
	then there exists a constant $C > 0$
	such that for all $t\in {[0,T]}$, $$|Y_t|^p\leq C ,\quad\mathbb{P}\text{-a.s.}$$
		
	Here $C$ depends on $\|\xi\|_\infty,\|\Ifzero\|_\infty,\mu,p,T,\rho,\alpha,\beta$ for $p \ge 2$. If $p<2$, C depends on the same variables but $\beta_1,\beta_2,q$ instead of $\beta$.
\end{proposition}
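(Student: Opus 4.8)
The plan is to reproduce Step~1 of \autoref{supprop} (resp.\ of \autoref{supproppleq2} when $1<p<2$), but to replace the expectation $\E$ by the conditional expectation $\E_t$, so that the integrated $\mathcal{S}^p$-bound is upgraded to a \emph{pathwise} one. Concretely, I start from the It\^o expansion \eqref{gammaeq1} of $\ietat|Y_t|^p$ on $[t,T]$ and choose the weight $\eta$ and the Young parameters $R_z,R_u$ exactly as there, so that $\eta=p\bigl(\alpha\rho(1)+\mu+\beta^2(R_z+R_u)/2\bigr)+(p-1)\Kfzero\in L^\infty(\Omega;L^1([0,T]))$. This cancels every term that is linear in $|Y_s|^p$ (those stemming from $\mu$, from the Young split of the $\beta$-term, and from $\Kfzero$) against the left-hand side, leaving on the right only the $\rho(|Y_s|^p)$-term and the term $|\fzero(s)|\Ifzero^{p-1}$. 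Since $\int_0^T\alpha<\infty$, $\int_0^T\Kfzero\,ds=1$ and $\int_0^T(\mu+\beta^2)\,ds$ is essentially bounded, the weight stays between $1$ and a deterministic constant $K$, i.e.\ $1\le\ietat\le K$.

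I then apply $\E_t$ to both sides. As $(Y,Z,U)$ is an $\lp$-solution, the integrals constituting $M(t)$ are genuine martingales on $[t,T]$, so $\E_t[M(t)]=0$; the jump term $P(t)$ and the surviving $|Y_s|^{p-2}|Z_s|^2$- and $|Y_s|^{p-2}\|U_s\|^2$-integrals are nonnegative and are discarded. Using $\xi,\Ifzero\in L^\infty$ I bound $\E_t[\ietaT|\xi|^p]\le K\|\xi\|_\infty^p$ and, since $\int_t^T|\fzero(s)|\,ds\le\Ifzero\le\|\Ifzero\|_\infty$,
\[
\int_t^T\ietas\,|\fzero(s)|\,\Ifzero^{p-1}\,ds\le K\,\|\Ifzero\|_\infty^{p}.
\]
Dividing by $\ietat\ge1$ and pulling the deterministic $\alpha$ out of $\E_t$ (Tonelli, nonnegative integrand), the only $Y$-dependent term left is the $\rho$-term, so that $\mathbb{P}$-a.s.\ and for every $t$,
\[
|Y_t|^p\le C_0\bigl(\|\xi\|_\infty^p+\|\Ifzero\|_\infty^p\bigr)+C_0\int_t^T\alpha(s)\,\E_t\bigl[\rho(|Y_s|^p)\bigr]\,ds.
\]

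To close the estimate I pass to the deterministic function $\phi(t):=\esssup_{\omega}|Y_t|^p$. Because $\rho$ is nondecreasing, $\rho(|Y_s|^p)\le\rho(\phi(s))$ a.s., hence $\E_t[\rho(|Y_s|^p)]\le\rho(\phi(s))$, and taking the essential supremum over $\omega$ yields
\[
\phi(t)\le C_0\bigl(\|\xi\|_\infty^p+\|\Ifzero\|_\infty^p\bigr)+C_0\int_t^T\alpha(s)\,\rho(\phi(s))\,ds.
\]
The Osgood condition $\int_{0^+}1/\rho=\infty$ then lets the Bihari--LaSalle inequality (\autoref{bihari-prop}) bound $\phi$ by a constant $C$ depending only on the listed data, which is the assertion. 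For $1<p<2$ I would run the identical scheme from the It\^o formula in the proof of \autoref{supproppleq2} under \ref{a3le2}; the only additional term, $\beta_2|Y_s|^{p-1}\|U_s\|$, is absorbed exactly as in that proof via Young's and H\"older's inequalities, the $U$-bound of \autoref{YdeterminesZUple2}, and a partition of $[0,T]$ into sufficiently small subintervals.

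The main obstacle is precisely this passage to $\phi$: a priori only $Y\in\mathcal{S}^p$ is known, so $\phi(t)=\esssup_{\omega}|Y_t|^p$ might be infinite, whereas Bihari--LaSalle requires $\phi$ to be finite. I would bridge this gap by localization: running the whole argument up to the stopping times $\tau_N=\inf\{s\in[t,T]:|Y_s|\ge N\}\wedge T$ keeps every quantity finite, the resulting bound on the truncated analogue of $\phi$ is uniform in $N$ (its right-hand side involves only $\|\xi\|_\infty,\|\Ifzero\|_\infty,\rho,\alpha$, not $N$), and letting $N\to\infty$ together with \autoref{bihari-prop} delivers the deterministic constant $C$.
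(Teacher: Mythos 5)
Your overall strategy—rerunning the a priori estimate with $\E_t$ in place of $\E$ and exploiting $\xi,\Ifzero\in L^\infty$ to make the Bihari--LaSalle constant deterministic—is exactly the paper's idea, and your treatment of the martingale term, the choice of $\eta$, $R_z$, $R_u$, and the $p<2$ case (where you correctly recognize that the $\beta_2\|U_s\|$ term forces you through the full machinery of \autoref{supproppleq2} and \autoref{YdeterminesZUple2}) all match. The genuine gap is the step where you close the estimate: you pass to $\phi(t)=\esssup_\omega|Y_t|^p$ and propose to justify its finiteness by localization, but the localized inequality is \emph{not} uniform in $N$. On $[t,\tau_N]$ the terminal term in the It\^o expansion is $|Y_{\tau_N}|^p$, not $|\xi|^p$, and on the event $\{\tau_N<T\}$ right-continuity forces $|Y_{\tau_N}|\geq N$ (with a possibly unbounded jump overshoot on top), so the right-hand side of the truncated inequality contains a contribution of size at least $N^p\,\chi_{\{\tau_N<T\}}$, whose essential supremum grows with $N$ and may even be infinite. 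Hence the claim that the truncated bound involves only $\|\xi\|_\infty,\|\Ifzero\|_\infty,\rho,\alpha$ is false, and letting $N\to\infty$ does not deliver the constant; moreover Bihari--LaSalle (\autoref{bihari-prop}) requires a \emph{bounded} function $y$, which is precisely what remains unproved for $\phi$.

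The repair is to avoid $\esssup_\omega$ altogether. The paper reruns the whole of \autoref{supprop} (resp.\ \autoref{supproppleq2}) with $\E_t$, arriving at a random integral inequality for the function $s\mapsto\E_t\bigl[\sup_{r\in[s,T]}|Y_r|^p\bigr]$, and applies Bihari--LaSalle $\omega$-by-$\omega$: this function is a.s.\ finite and bounded in $s$ simply because $Y\in\mathcal{S}^p$ gives $\E\bigl[\sup_{r\in[t,T]}|Y_r|^p\bigr]<\infty$, while the additive constant $c$ is deterministic thanks to $\xi,\Ifzero\in L^\infty$; the pathwise Bihari bound is then a deterministic constant, and $|Y_t|^p\leq\E_t\bigl[\sup_{r\in[t,T]}|Y_r|^p\bigr]\leq C$. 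Alternatively, your leaner ``Step-1-only'' route for $p\geq2$ can be salvaged without the sup-estimates by using conditional Jensen with the \emph{concavity} of $\rho$: from $\E_t\bigl[\rho(|Y_s|^p)\bigr]\leq\rho\bigl(\E_t[|Y_s|^p]\bigr)$ you obtain, for a.e.\ fixed $\omega$, the inequality
\begin{align*}
\E_t\bigl[|Y_s|^p\bigr](\omega)\;\leq\;c+\int_s^T K(r)\,\rho\bigl(\E_t[|Y_r|^p](\omega)\bigr)\,dr\,,
\end{align*}
where the function $s\mapsto\E_t[|Y_s|^p](\omega)$ is a.s.\ bounded by $\E_t\bigl[\sup_r|Y_r|^p\bigr](\omega)<\infty$; pathwise Bihari then yields $|Y_t|^p=\E_t[|Y_t|^p]\leq C$ directly, with no localization. (One further point to be aware of, though the paper itself glosses it in the same way: your assertion that $1\leq\ietat\leq K$ uses essential boundedness of $\int_0^T(\mu+\beta^2)\,ds$, which under \ref{a3ge2} is only a.s.\ finite; this is consistent with the Proposition's statement that $C$ ``depends on $\mu,\beta$'', but it is an implicit strengthening rather than a consequence of the hypotheses.)
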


\begin{proof}	
	\changed{We copy the proofs of \autoref{supprop} and \autoref{supproppleq2} for the cases $1<p<2$ and $2\leq p$,}
	replacing the operator $\E$ by $\E\left[\ \cdot\ \middle| \mathcal{F}_t\right]$ considering the BSDEs on $[t,T]$, which leads to the estimates $\E\left[\sup_{s\in{[t,T]}}|Y_s|^p \middle| \mathcal{F}_t\right]<C$ for all $t\in [0,T]$. The assertion now follows from the monotonicity of the conditional expectation.		
\end{proof}

\section{Proof of the Main \autoref{existence}} \label{sec:proof_of_lp_existence}

The proof basically follows the one in Briand et al. \cite[Theorem 4.2]{briand2003lp}. For convenience of the reader, we give a detailed proof adapted to our more general setting. We consider only the case $1<p< 2$ as the case $p \ge 2$ is similar but easier.

\bigskip

\textbf{Step 1:} Uniqueness\\
\sloppy Assume we have another solution $(Y',Z',U')$. Then \autoref{supproppleq2} applied to the BSDE $(0,g)$ with $g(t,y,z,u)=f(t,y+Y',z+Z',u+U')-f(Y',Z',U')$ implies $(Y-Y',Z-Z',U-U')=(0,0,0)$.

\bigskip

\textbf{Step 2:} \\
In this step, we construct a first approximating sequence of generators for $f$ and show several estimates for the solution processes. Assume that $\xi, \Ifzero \in L^\infty$. As \ref{A3le2} is satisfied, the condition is also satisfied for the changed parameter $\mu' = \rho(1)\alpha+\mu$. We take the constant $C 
%= C(\|\xi\|_\infty,\|\Ifzero\|_\infty, \mu', p,T,\rho,\alpha,\beta_1,\beta_2,q)
$ appearing in \autoref{prop:stability} and choose an $r>C$.\bigskip

Take a smooth real function $\theta_r$ such that $0\leq \theta_r\leq 1$, $\theta_r(y)=1$ for $|y|\leq r$ and $\theta_r(y)=0$ for $|y|\geq r+1$ and define
$$h_n(t,y,z,u):=\theta_r(y)\left(f(t,y,c_n(z),\tilde{c}_n(u))-\fzerot\right)\frac{n}{\psi_{r+1}(t)\vee n}+\fzerot.$$
Here, $c_n, \tilde{c}_n$ are the projections $x \mapsto nx/(|x| \vee n)$ onto the closed unit balls of radius $n$, respectively in {$\Rdk$} and $L^2(\nu)$.

These generators $h_n$ satisfy the following properties for all $n \in \N$:
\begin{enumerate}[label={(A\,\roman*)}]
\item\label{i} Condition \ref{A1} is satisfied.

\item
By \ref{A2},
$$|h_n(t,y,z,u)|\leq n + |f_0(t)| + \Phi(t)(|z|+\|u\|).$$
\item\label{iii} 
By \ref{A2} and \ref{A3le2}, with $\beta = \beta_1 + \beta_2$, and $C_r$ denoting the Lipschitz constant of $\theta_r$, it holds that
\begin{align*}
&\innerproduct{y-y'}{h_n(t,y,z,u)-h_n(t,y',z',u')}\\
&=\theta_r(y)\frac{n}{\psi_{r+1}(t)\vee n}\innerproduct{y-y'}{f(t,y,c_n(z),\tilde{c}_n(u))-f(t,y',c_n(z'),\tilde{c}_n(u'))}\\
&\quad+\frac{n}{\psi_{r+1}(t)\vee n}\left(\theta_r(y)-\theta_r(y')\right)\innerproduct{y-y'}{f(t,y',c_n(z'),\tilde{c}_n(u'))-\fzerot}\\
&\leq \frac{n}{\psi_{r+1}(t)\vee n}\left(\alpha(t)\frac{\rho(|y-y'|^p)}{|y-y'|^{p-2}}+\mu(s)|y-y'|^2+\beta(t)(|z-z'|+\|u-u'\|)\right)\\
&\quad+C_r(2\Phi(s)+1)n|y-y'|^2\\
&\leq \alpha(t)\rho(|y-y'|^2)+\left(\rho(1)\alpha(s)+\mu(s)+C_r(2\Phi(s)+1)n\right)|y-y'|^2\\
&\quad+\beta(t)(|z-z'|+\|u-u'\|),
\end{align*}
where we used \autoref{remA3}\ref{remA3:A3bounds}\ref{remA3le2}. 
\item\label{iv} By \ref{A3le2}, again with $\beta = \beta_1 + \beta_2$ we have,
$$yh_n(t,y,z,u)\leq |y||\fzerot|+\alpha(t)\rho(|y|^2)+(\rho(1)\alpha(t)+\mu(t))|y|^2+\beta(t)|y|(|z|+\|u\|).$$
\end{enumerate}

\changed{Properties \ref{i}-\ref{iii} imply that for $\mu_r(s):=\left(\rho(1)\alpha(s)+\mu(s)+C_r(2\Phi(s)+1)n\right)$ the generator $g_n(t,y,z,u):=e^{\int_0^\cdot \mu_r(s)ds}\left(h_n-\mu_r\cdot y\right)$ satisfies assumptions (A\ 1)-(A\ 3) of Theorem 3.1 in \cite{geiss2018monotonic} (or rather a straightforward adaptation to $d$ dimensions of it) and admits a unique solution of BSDE $(e^{\int_0^T \mu_r(s)ds}\xi,g_n)$. Thus, by the transformation $(\tilde{Y},\tilde{Z},\tilde{U}):=e^{-\int_0^\cdot \mu_r(s)ds}(Y,Z,U)$, one gets that also $(\xi,h_n)$ has a unique solution $(Y^n,Z^n,U^n)$.}

\changed{Moreover, by property \ref{iv} and  \eqref{rem:weaker_assumption_on_monotonicity_1} we are} able to apply \autoref{prop:stability} to get that $\|Y_t^n\|_\infty\leq r$. Since $Y^n_t$ is bounded by $r$, we get that $(Y^n,Z^n,U^n)$ is also a solution to the BSDE $(\xi,f_n)$, with 
$$f_n(t,y,z,u):=\left(f(t,y,c_n(z),\tilde{c}_n(u))-\fzerot\right)\frac{n}{\psi_{r+1}(t)\vee n}+\fzerot .$$ 

Comparing the solutions $(Y^n,Z^n,U^n)$ and $(Y^m,Z^m,U^m)$ for $m\geq n$, we use the standard methods from \eqref{gammaeq1}-\eqref{gammaeq2}, for the differences $$(\Delta Y, \Delta Z, \Delta U):=(Y^m,Z^m,U^m)-(Y^n,Z^n,U^n).$$ In this procedure, we replace the use of the monotonicity condition \ref{A3ge2} in \autoref{supproppleq2} by
\begin{align*}%\label{yDf}
&|\Delta Y_s|^{p-2} \innerproduct{\Delta Y_s}{f_m(s,Y^m_s,Z^m_s,U^m_s)-f_n(s, Y^n_s,Z^n_s,U^n_s)}\nonumber\\
%&\le & (Y_s-0)\left(f(s,Y_s,Z_s,U_s)-F(s)\right)+|Y_s|F(s)\\
&= |\Delta Y_s|^{p-2} \innerproduct{\Delta Y_s}{f_m(s,Y^m_s,Z^m_s,U^m_s)-f_m(s, Y^n_s,Z^n_s,U^n_s)}\nonumber \\
&\quad+ |\Delta Y_s|^{p-2} \innerproduct{\Delta Y_s}{f_m(s,Y^n_s,Z^n_s,U^n_s)-f_n(s, Y^n_s,Z^n_s,U^n_s)} \nonumber\\
&\le \alpha(s)\rho(|\Delta Y_s|^p)+\left(\rho(1)\alpha(s)+\mu(s)+\frac{R_z+R_u}{2}\beta(s)^2\right)|\Delta Y_s|^p+\left(\frac{|\Delta Z_s|^2}{2R_z}+\frac{\|\Delta U_s\|^2}{2R_u}\right)\nonumber\\
&\quad+ |\Delta Y_s|^{p-2} \left| \innerproduct{\Delta Y_s}{f_m(s,Y^n_s,Z^n_s,U^n_s)-f_n(s, Y^n_s,Z^n_s,U^n_s)}\right|,
\end{align*}

such that  the same steps of the proof of \autoref{supproppleq2} can be conducted to get a function $h$ with
\begin{align*}
&\|\Delta Y\|^p_{\mathcal{S}^p} +\left\|\Delta Z\right\|_{L^p(W) }^p + \left\|\Delta U\right\|_{L^p(\tilde N) }^p \\
&\leq h\left(\E\int_0^T 
|\Delta Y_s|^{p-2} \left|\innerproduct{\Delta Y_s}{f_m(s,Y^n_s,Z^n_s,U^n_s)-f_n(s,Y^n_s,Z^n_s,U^n_s)}\right|ds\right)
\end{align*}
(in the case for \ref{A3ge2}, we use the steps from \autoref{supprop} and \autoref{YdeterminesZU}).
So $\|\Delta Y\|^p_{\mathcal{S}^p} +\left\|\Delta Z\right\|_{L^p(W) }^p + \left\|\Delta U\right\|_{L^p(\tilde N) }^p$ tends to zero if 
$$
\E\int_0^T |\Delta Y_s|^{p-2} \left|\innerproduct{\Delta Y_s}{f_m(s,Y^n_s,Z^n_s,U^n_s)-f_n(s, Y^n_s,Z^n_s,U^n_s)}\right|ds
$$
does, which we will show next (in the case of \ref{A3ge2}, this follows from \autoref{supprop} \changed{with  \eqref{rem:weaker_assumption_on_monotonicity_2}}).\bigskip

Since $|Y^m_t|,|Y^n_t|\leq r$, we estimate
\begin{align}\label{fnfmbound}
&\E\int_0^T |\Delta Y_s|^{p-2} \left| \innerproduct{\Delta Y_s}{f_m(s,Y^n_s,Z^n_s,U^n_s)-f_n(s, Y^n_s,Z^n_s,U^n_s)}\right|ds\nonumber\\
&\leq (2r)^{p-1}\E\int_0^T \left|f_m(s,Y^n_s,Z^n_s,U^n_s)-f_n(s, Y^n_s,Z^n_s,U^n_s)\right|ds.
\end{align}
\changed{Because of the definition of $f_m, f_n$ and since $m \ge n$, the integrand is zero if $|Z_s|\leq n, \|U_s\|\leq n$ and $\psi_{r+1}(s)\leq n$ 
and bounded by} 
\begin{align}\label{psibd}
&2\Phi(s)\left(|Z^n_s|+\|U^n_s\|\right)\chi_{\{|Z^n_s|+\|U^n_s\|>n\}}+2\Phi(s)\left(|Z^n_s|+\|U^n_s\|\right)\chi_{\{\psi_{r+1}(s)>n\}}\nonumber\\
&+2\psi_{r+1}(s)\chi_{\{\psi_{r+1}(s)>n\}}+2\psi_{r+1}(s)\chi_{\{|Z^n_s|+\|U^n_s\|>n\}}
\end{align}
otherwise. 

To show convergence of the integral of \eqref{psibd}, we use the uniform integrability of the families $\Phi(|Z^n|+\|U^n\|)_{n\geq 1}$ with respect to the measure $\mathbb{P}\otimes\lambda$, which follows from 
\begin{align*}
&\E\int_0^T\Phi(s)(|Z^n_s|+\|U^n_s\|)ds\leq \E\int_0^T \left(\Phi(s)^2+|Z^n_s|^2+\|U^n_s\|^2\right)ds\\
&\leq \left\|\int_0^T \Phi(s)^2ds\right\|_\infty +r',
\end{align*}
since by \autoref{supproppleq2} and \ref{iv}, there is $r'>0$ such that $\left\|Z^n\right\|_{L^2(W) }^2 + \left\| U^n\right\|_{L^2(\tilde N) }^2 < r'$.
Therefore, as \eqref{psibd} (as sequence in $n$) is uniformly integrable with respect to $\mathbb{P}\otimes\lambda$, dominating the sequence  $\left(\left|f_m(s,Y^n_s,Z^n_s,U^n_s)-f_n(s, Y^n_s,Z^n_s,U^n_s)\right|\right)_{n\geq 0}$, which approaches zero pointwisely, also \eqref{fnfmbound} tends to zero as $m>n\to\infty$.
Hence, also $\|\Delta Y\|^p_{\mathcal{S}^p} +\left\|\Delta Z\right\|_{L^p(W) }^p + \left\|\Delta U\right\|_{L^p(\tilde N) }^p$ tend to zero, showing that the $(Y^n,Z^n,U^n)$ form a Cauchy sequence in $\mathcal{S}^p\times L^p(W)\times L^p(\tilde{N})$ and converge to an element $(Y,Z,U)$.
\bigskip

\textbf{Step 3:}\\
\changed{We now show that $(Y,Z,U)$ satisfies the BSDE $(\xi, f)$, for $\xi, \Ifzero \in L^\infty$, as supposed in step 2}.
The stochastic integral terms of the BSDEs $(\xi,f_n)$ with solution $(Y^n,Z^n,U^n)$ converge to the corresponding terms of the BSDE $(\xi,f)$ also in probability. It is left to show that, at least for a subsequence,
$$\int_t^T f_n(s,Y^n_s,Z^n_s,U^n_s)ds\to\int_t^T f(s,Y_s,Z_s,U_s)ds,\quad\mathbb{P}\text{-a.s.}$$
For an appropriate subsequence all other terms of the BSDEs converge almost surely. W.l.o.g, this subsequence is assumed to be the original one. Hence, we know that there is a random variable $V_t$ such that 
$$\int_t^T f_n(s,Y^n_s,Z^n_s,U^n_s)ds\to V_t,\quad\mathbb{P}\text{-a.s.}$$
We take expectations and split up the integral into
$$\delta^{(1)}:=\E\int_t^T \left(f_n(s,Y^n_s,Z^n_s,U^n_s)-f(s,Y^n_s,Z^n_s,U^n_s)\right)ds$$
and
$$\delta^{(2)}:=\E\int_t^T \left(f(s,Y^n_s,Z^n_s,U^n_s)-f(s,Y_s,Z_s,U_s)\right)ds.$$
By the same argument as for inequality \eqref{psibd} above, 
\begin{align*}
|\delta^{(1)}|&\leq \E\int_0^T\biggl[2\Phi(s)\left(|Z^n_s|+\|U^n_s\|\right)\chi_{\{|Z^n_s|+\|U^n_s\|>n\}}\\
&+2\Phi(s)\!\left(|Z^n_s|+\|U^n_s\|\right)\!\chi_{\{\psi_{r+1}(s)>n\}}\!+2\psi_{r+1}(s)\chi_{\{\psi_{r+1}(s)>n\}}\!+2\psi_{r+1}(s)\chi_{\{|Z^n_s|+\|U^n_s\|>n\}}\biggr]ds,
\end{align*}
which converges to zero. Now, for $\delta^{(2)}$, \changed{we know that $(Y^n)\to Y$ in $\mathcal{S}^p$, hence there is a subsequence $(Y^{n_\ell})_\ell\geq 1$ s.t. $(Y^{n_\ell}\to Y)$ uniformly in $t$, $\mathbb{P}$-a.s. Since all $Y^{n_l}$ are c\`adl\`ag (as $Y$-components of solutions to BSDEs) and bounded by $r$ from step 2, we also get that the limit $Y$ is c\`adl\`ag and $\|Y_t\|_\infty$ is bounded by $r$ for all $t\in{[0,T]}$. Further, we know that 
$(Y^n,Z^n,U^n)\to(Y,Z,U)$ in the measure $\mathbb{P}\otimes\lambda$. Thus also 
$f(s,Y^n_s,Z^n_s,U^n_s)\to f(s,Y_s,Z_s,U_s)$ in $\mathbb{P}\otimes\lambda$. Since now 
\begin{align*}
\delta^{(2)}&=\E\int_t^T \left(f(s,Y^n_s,Z^n_s,U^n_s)-f(s,Y_s,Z_s,U_s)\right)ds\\
&=\E\int_t^T \left(f(s,Y^n_s,Z^n_s,U^n_s)-f_0(s)-(f(s,Y_s,Z_s,U_s)-f_0(s))\right)ds
\end{align*} 
and, by \ref{A2},
\begin{align*}
&|f(s,Y^n_s,Z^n_s,U^n_s)-f_0(s)|+|f(s,Y_s,Z_s,U_s)-f_0(s)|\\
&\leq 2\psi_{r+1}(s)+\Phi(s)\left(|Z^n_s|+|Z_s|+\|U^n_s\|+\|U_s\|\right),\end{align*}
from the uniform integrability of $\left(\psi_{r+1}+\Phi(|Z^n|+|Z|+\|U^n\|+\|U\|)\right)_{n\geq 1}$ with respect to $\mathbb{P}\otimes\lambda$, it follows that also $|\delta^{(2)}|\to 0$.}

Thus, $$\E\int_t^T f_n(s,Y^n_s,Z^n_s,U^n_s)ds\to\E\int_t^T f(s,Y_s,Z_s,U_s)ds,$$
and extracting a subsequence $(n_l)_{l\geq 1}$ satisfying $\mathbb{P}$-a.s
$$\int_t^T f_{n_l}(s,Y^{n_l}_s,Z^{n_l}_s,U^{n_l}_s)ds\to\int_t^T f(s,Y_s,Z_s,U_s)ds,$$
shows that $V_t=\int_t^T f(s,Y_s,Z_s,U_s)ds$. So $(Y,Z,U)$ satisfies the BSDE $(\xi,f)$.
\bigskip

\textbf{Step 4:}\\
We now approximate a general $\xi\in L^p$ by $c_n(\xi)$ and the generator 
$f$ by 
$$
f^n(t,y,z,u):=f(t,y,z,u)-\fzerot+c_n(\fzerot).
$$ 
A solution $(Y^n,Z^n,U^n)$ to $(c_n(\xi),f^n)$ exists due to the last step. Now we get, for $m\geq n$, denoting differences again by
$$(\Delta Y, \Delta Z, \Delta U):=(Y^m,Z^m,U^m)-(Y^n,Z^n,U^n)$$
via \autoref{supproppleq2} (we use the generator $g_{m,n}(t,y,z,u):=f^m(t,y+Y^n,z+Z^n,u+U^n)-f^n(Y^n,Z^n,U^n)$):
\begin{align*}
&\|\Delta Y\|^p_{\mathcal{S}^p} +\left\|\Delta Z\right\|_{L^2(W) }^2 + \left\|\Delta U\right\|_{L^2(\tilde N) }^2 \\
&\leq h\Biggl(\E|c_m(\xi)-c_n(\xi)|^p+\E\left(\int_0^T \left|f^m(s,Y^n_s,Z^n_s,U^n_s)-f^n(Y^n_s,Z^n_s,U^n_s)\right|ds\right)^{\!\!p}\Biggr)\\
&=h\left(\E|c_m(\xi)-c_n(\xi)|^p+\E\left(\int_0^T \left|c_m(f_0(s))-c_n(f_0(s))\right|ds\right)^{\!\!p}\right).
\end{align*}
As $n\to\infty$, the latter term tends to zero showing convergence of the sequence $(Y^n,Z^n,U^n)$ to $(Y,Z,U)$ in ${\mathcal{S}^p}\times{L^p(W) }\times L^p(\tilde N)$.
Again it remains to check that
$$\int_t^T f^n(s,Y^n_s,Z^n_s,U^n_s)ds\to\int_t^T f(s,Y_s,Z_s,U_s)ds, \quad \mathbb{P}\text{-a.s.,}$$
at least for a subsequence. \changed{This is equivalent to,
	$$\int_t^T \left(f^n(s,Y^n_s,Z^n_s,U^n_s)-f(s,Y^n_s,Z^n_s,U^n_s)\right)ds- \int_t^T \left(f(s,Y_s,Z_s,U_s)-f(s,Y^n_s,Z^n_s,U^n_s)\right)ds \to 0, \quad \mathbb{P}\text{-a.s.,}$$

The first integral is
\begin{align*}
\int_t^T \left(f^n(s,Y^n_s,Z^n_s,U^n_s)-f(s,Y^n_s,Z^n_s,U^n_s)\right)ds
=\int_t^T \left(c_n(f_0(s))-f_0(s)\right)ds,
\end{align*}
which tends to zero as $n\to\infty$.
\bigskip

The second integral is more complicated.}
Here, we extract a subsequence $(n_l)_{l\geq 1}$ such that $$\sup_{t\in[0,T]}|Y_t-Y^{n_l}_t|\to 0,\quad\mathbb{P}\text{-a.s.},\quad(Z^{n_l},U^{n_l})\to (Z,U),\quad\lambda\text{-a.e.},\mathbb{P}\text{-a.s.}$$ and
$$\left(\int_0^T |Z^{n_l}_s|^2 ds,\int_0^T \|U^{n_l}_s\|^2 ds\right)\to\left(\int_0^T |Z_s|^2 ds,\int_0^T \|U_s\|^2 ds\right),\quad\mathbb{P}\text{-a.s.},$$
which is possible due to the convergence of $(Y^n,Z^n,U^n)$ in $\mathcal{S}^p\times L^p(W)\times L^p(\tilde{N})$.
Severini-Egorov's theorem now permits for a given $\varepsilon>0$ the existence of a set $\Omega_\varepsilon, \mathbb{P}(\Omega_\varepsilon)>1-\varepsilon$ such that there is a number $N_\varepsilon>0$ with $$\sup_{t\in[0,T]}|Y_t(\omega)-Y^{n_l}_t(\omega)|<1\quad \text{for all }l>N_\varepsilon\text{ and }\omega\in\Omega_\varepsilon,$$
and the convergences above persist on this set.
For given $r>0$ on $\Omega_\varepsilon^r:=\Omega_\varepsilon\cap\left\{\sup_{t\in[0,T]}|Y_t|\leq r\right\}$, we have for $l>N_\varepsilon$,
\begin{align*}
&\int_0^T |f(s,Y^{n_l}_s,Z^{n_l}_s,U^{n_l}_s)-f(s,Y_s,Z_s,U_s)|ds\\
&\leq \int_0^T \left(2\psi_{r+1}(s)+\Phi(s)\left(|Z^{n_l}_s|+|Z_s|+\|U^{n_l}_s\|+\|U_s\|\right)\right)ds<C<\infty,
\end{align*}
where $C$ may still depend on $\omega\in \Omega_\varepsilon^r$ and for such $\omega$, \changed{the family $\left(s\mapsto(|Z^{n_l}_s\left(\omega)|^2+\|U^{n_l}_s(\omega)\|^2\right)_{l\geq 0}\right)$ }is uniformly integrable with respect to $\lambda$.
Thus, since $f(s,Y^{n_l}_s,Z^{n_l}_s,U^{n_l}_s)\to f(s,Y_s,Z_s,U_s)$, for $\lambda$-a.a. $s\in [0,T]$ on $\Omega_\varepsilon^r$, dominated convergence yields 
$$\lim_{l\to\infty}\int_t^Tf(s,Y^{n_l}_s,Z^{n_l}_s,U^{n_l}_s)ds= \int_t^Tf(s,Y_s,Z_s,U_s)ds$$ 
for $l\to\infty$ on $\Omega_{\varepsilon}^r$ for all $\varepsilon>0,r>0$. The last identity now even holds on $\Omega^0:=\bigcup_{r>0}\bigcup_{q\geq 1} \Omega_{1/q}^r$ which is an almost sure event.
So, the limit of $\int_t^T f^n(s,Y^n_s,Z^n_s,U^n_s)ds$ is uniquely determined as $\int_t^T f(s,Y_s,Z_s,U_s)ds$.
Hence, $(Y,Z,U)$ is a solution to BSDE $(\xi,f)$. \qed

\section{Comparison Results} \label{sec:comparison_result}

We switch to dimension $d = 1$ and set $\R_0 = \R \setminus \{0\}$ for the following comparison results, generalizing those in \cite{geiss2018monotonic} to the case of generators that do not have linear growth in the $y$-variable and to an $L^p$-setting for $p>1$. Moreover, in contrast to \cite{geiss2018monotonic}, our proof does not depend on approximation theorems for BSDEs that demand deep measurability results.

\begin{theorem}[Comparison, $p\geq 2$]\label{comparison}
Let $p, p' \ge 2$ and $(Y,Z,U)$ be the $\lp$-solution to $(\xi,f)$ and $(Y',Z',U')$ be the $L^{p'}$-solution to $(\xi',f')$. Furthermore let $f$ and $f'$ satisfy \ref{A1} and \ref{A3ge2} for the according $p,p'$. If the following assumptions hold
\begin{enumerate}[label=({\roman*})]
\item
 $\xi\leq \xi'$, $\mathbb{P}$-a.s.,
 \item
 $f(s,Y'_s,Z'_s,U'_s)\leq f'(s,Y'_s,Z'_s,U'_s)$, for $\mathbb{P}\otimes\lambda$-a.a. $(\omega,s)\in\Omega\times{[0,T]}$   
 and
% \item[label={(A$\gamma$)}]
	\item[(A$\gamma$)] for all $u,u'\in L^2(\nu)$ with $u'\geq u$
	\begin{align}
	\label{gamma} 
	f(s,y,z,u)- f(s,y,z,u')\leq \int_{\mathbb{R}_0}(u'(x)-u(x))\nu(dx), \quad\mathbb{P}\otimes\lambda\text{-a.e},
	\end{align}
\end{enumerate}

then for all $t \in [0,T]$, we have $\mathbb{P}$-a.s, 
$$
Y_t\leq Y'_t.
$$

The same assertion follows from an equivalent formulation for $f'$, requiring $f(s,Y_s,Z_s,U_s) \leq f'(s,Y_s,Z_s,U_s)$ and \eqref{gamma} being satisfied for $f'$.
\end{theorem}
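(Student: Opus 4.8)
The plan is to prove $Y_t \le Y_t'$ by showing that the positive part of $\hat Y := Y - Y'$ vanishes. With $\hat Z := Z - Z'$, $\hat U := U - U'$ and $\hat\xi := \xi - \xi' \le 0$, subtracting the two BSDEs shows that $\hat Y$ solves a BSDE with terminal value $\hat\xi$ and driver $F_s := f(s,Y_s,Z_s,U_s) - f'(s,Y_s',Z_s',U_s')$. I would first split $F_s = \big[f(s,Y_s,Z_s,U_s) - f(s,Y_s',Z_s',U_s')\big] + \big[f(s,Y_s',Z_s',U_s') - f'(s,Y_s',Z_s',U_s')\big]$; by assumption (ii) the second bracket is nonpositive and will be dropped after multiplication by a nonnegative weight, which is exactly where the hypothesis ``comparison of the generators on the solution processes'' enters. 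The first bracket is then treated with the monotonicity condition \ref{A3ge2}, which in dimension one controls the $(y,z)$-behaviour through $\alpha,\rho,\mu$ and the $z$-Lipschitz constant $\beta$ (\autoref{remA3}), while the $u$-behaviour is governed by the structure condition (A$\gamma$).

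The heart of the argument is the jump term. I would read (A$\gamma$) as the assertion that $u \mapsto f(s,y,z,u) + \int_{\R_0} u\,d\nu$ is nondecreasing, equivalently that writing the $u$-increment of the driver as $\int_{\R_0}\gamma_s(x)\hat U_s(x)\,\nu(dx)$ for a predictable field $\gamma$ forces $\gamma_s(x) \ge -1$ $\nu$-a.e. Linearizing the first bracket as $a_s\hat Y_s + b_s\hat Z_s + \int_{\R_0}\gamma_s\hat U_s\,d\nu$, with $|b_s| \le \beta(s)$ and $a_s$ bounded above via the monotonicity estimate, I would then use the Dol\'eans--Dade exponential $\mathcal{E}$ generated by $b\cdot W$ and $\gamma\star\tilde{N}$ as an integrating factor (equivalently, pass to an equivalent measure $\mathbb{Q}$). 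Since $1+\gamma \ge 0$ this density is a nonnegative local martingale, and under it $\int \hat Z\,dW$ and $\int \hat U\,\tilde{N}$ become martingales while the $b$- and $\gamma$-drifts are absorbed; in particular the $u$-part of the driver is cancelled exactly against the tilted compensator of $\tilde{N}$, which is what a naive application of It\^o's formula to $(\hat Y^+)^p$ fails to achieve.

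What remains is the scalar inequality $\hat Y_t^+ \le \E^{\mathbb{Q}}_t\int_t^T a_s\hat Y_s^+\,ds$ on $\{\hat Y_s > 0\}$, obtained from $\hat\xi \le 0$ and the discarded nonpositive term, with the surviving $y$-drift bounded by \ref{A3ge2} through an $\alpha(s)\rho(\cdot) + \mu(s)(\cdot)$ expression. The Osgood property $\int_{0^+}\rho^{-1} = \infty$ then allows the Bihari--LaSalle inequality (\autoref{bihari-prop}) to force $\hat Y_t^+ = 0$ for all $t$, i.e. $Y_t \le Y_t'$; the alternative formulation follows by exchanging the roles of $f$ and $f'$. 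I expect the decisive obstacle to be the jump term: constructing the predictable linearization and, above all, checking that the exponential (the measure change) is a genuine positive martingale permitting the tilt of the $\tilde{N}$-compensator and the interchange with expectation, since (A$\gamma$) only yields $\gamma \ge -1$ and the boundary $\gamma = -1$ is degenerate. A secondary difficulty, absent in the Lipschitz theory, is that the monotone $y$-dependence makes $a_s$ non-integrable near $\hat Y_s = 0$, which is precisely why the concluding step must run through the Osgood/Bihari estimate rather than a plain Gronwall argument.
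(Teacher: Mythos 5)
Your plan is the classical linearization/Girsanov route (Royer, Kruse--Popier), and it fails at exactly the point you yourself flagged as decisive, but for a more basic reason than the martingale property of the exponential: condition \eqref{gamma} does not produce the predictable field $\gamma$ with $\gamma_s(x)\ge -1$ that your measure change requires. Assumption (A$\gamma$) compares only \emph{ordered} pairs $u\le u'$, whereas the increment you need to linearize involves $\hat U_s=U_s-U'_s$, which has no sign. Monotonicity of $u\mapsto f(s,y,z,u)+\int_{\R_0} u\,d\nu$ together with the Lipschitz bound $|f(s,y,z,u)-f(s,y,z,u')|\le\beta(s)\|u-u'\|$ coming from \ref{A3ge2} does not yield a kernel representation $f(s,y,z,u)-f(s,y,z,u')=\int_{\R_0}\gamma_s(x)\bigl(u(x)-u'(x)\bigr)\nu(dx)$ with $\gamma_s\ge-1$ pointwise \emph{and} $\gamma_s$ controlled in $L^2(\nu)$: the Riesz-type selection $\gamma_s=(f(u)-f(u'))\,(u-u')/\|u-u'\|^2$ gives the norm bound but nothing like $\gamma_s\ge-1$, and no measurable-selection argument closes this gap. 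The existence of such a $\gamma$ is precisely the strictly stronger hypothesis (A$\gamma$') (Kruse--Popier's ($\mathrm{H}_{\mathrm{comp}}$)) recorded in the paper just before \autoref{comparisonle2}, under which the paper itself remarks that the proof of \cite[Proposition 4]{Kruse} --- essentially your argument --- goes through. The content of \autoref{comparison} is that comparison holds under the weaker \eqref{gamma}, so your route assumes away the main difficulty. Even granting $\gamma$, you would still owe the verification that the Dol\'eans--Dade exponential is a true positive martingale with enough integrability to justify $\hat Y_t^+\le \E_t^{\mathbb{Q}}\int_t^T(\cdots)\,ds$ when $\mu,\beta$ are random and only a.s. integrable; you name this obstacle but do not resolve it.

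For contrast, the paper circumvents the linearization entirely: it projects both BSDEs with $\E_n=\E[\,\cdot\,|\mathcal{F}^n]$ onto the filtration of the finite-activity approximation $X^n$ (with carefully constructed predictable versions, \autoref{measurable-version}), applies the Tanaka--Meyer formula to $(\cdot)_+^2$ for $\Delta^n Y=\E_nY-\E_nY'$, and then uses \eqref{gamma} \emph{directly}: the jump domain $\{1/n\le|x|\}$ is split into $B_n(s)=\{\Delta^n U_s\ge-\Delta^n Y_s\}$ and its complement, on $B_n$ the $u$-arguments are exchanged via \eqref{gamma} (there the relevant pair is ordered), and on $B_n^c$ the residual term is bounded by $\nu(\{1/n\le|x|\})\,(\Delta^n Y_s)_+^2$ --- finite precisely because of the cut-off --- and absorbed into the modified Osgood function $\rho_n=\rho+\nu(\{1/n\le|x|\})\,\mathrm{id}$. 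Bihari--LaSalle then gives $\E(\Delta^n Y_t)_+^2=0$ for each $n$, and martingale convergence $\E_nY_t\to Y_t$ concludes. Your final Osgood/Bihari step is the right instinct and matches the paper, but to make the rest of your proof work you must either strengthen \eqref{gamma} to (A$\gamma$') or replace the measure change by an argument of this projection type.
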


\begin{proof}

The basic idea for this proof was inspired by the one of Theorem 8.3 in \cite{carmona} and is an extension and simplification of the one in \cite{geiss2018monotonic}.

\bigskip

\textbf{Step 1:}

First, note that $(Y,Z,U), (Y',Z',U')$ are solutions in $\mathcal{S}^2\times L^2(W)\times L^2(\tilde{N})$.

We use the conditional expectation $\E_n$ (see \autoref{sec:finlev}) on the BSDEs $(\xi,f)$ and $(\xi',f')$ to get (for the BSDE $(\xi,f)$)

\begin{align*}
\E_n Y_t=&\E_n\xi+\int_t^T \E_n f(s,Y_s,Z_s,U_s)ds-\int_t^T \E_n Z_s dW_s\\
& -\inttTR\chi_{\{1/n\leq |x|\}}\E_n U_s(x)\tilde{N}(ds,dx)\,.
\end{align*}

Note that here $(\E_n Z_t)_{t\in{[0,T]}}, (\E_n U_t(x))_{t\in{[0,T]}, x\in \R_0}$ are considered to be predictable (and $(\E_n f(t,Y_t,Z_t,U_t))_{t\in{[0,T]}}$ progressively measurable) processes that equal the conditional expectation $\mathbb{P}$-a.s. for almost all $t\in {[0,T]}$ (or $\lambda\otimes\nu$ almost every $(t,x)\in {[0,T]}\times\R_0$ for the $U$-process). In the case of $Y$, $(\E_n Y_t)_{t\in{[0,T]}},$ denotes a progressively measurable version of this process. For bounded or nonnegative processes, the construction of such processes can be achieved by using optional projections with parameters (see \cite{meyer} for optional projections with parameters, \cite{geiss2018monotonic} for the mentioned construction). In the present case, we are confronted with merely integrable processes: $Y, Z, \int_{\R_0}|U_\cdot(x)|^2\nu(dx)$ are integrable and hence also $f(s,Y_s,Z_s,U_s)\in L^1(W)$. The construction of a progressively measurable version for the processes at hand can be found in \autoref{measurable-version} and \autoref{measurable-version-remark} in the Appendix.
\bigskip

Moreover, assume for the rest of the proof that the coefficient $\mu$ of $f$ is zero: If this was not the case, we could use the transformed variables $(\tilde{Y}_t,\tilde{Z}_t,\tilde{U}_t):=e^{\int_0^t\mu(s)ds}(Y_t,Z_t,U_t)$ and $(\tilde{Y}',\tilde{Z}',\tilde{U}'):=e^{\int_0^t\mu(s)ds}(Y'_t,Z'_t,U'_t)$.
\bigskip

\textbf{Step 2:}
We use Tanaka-Meyer's formula \changed{(cf. \cite[Chapter 4, Theorem 70 and Corollary 1]{protter}) for the squared positive part function $(\cdot)_{+}^2$} to see that for $\eta:=18\beta^2$,\changed{
\begin{align*}
&e^{\int_0^t \eta(s)ds}(\E_nY_t-\E_nY'_t)^2_+=e^{\int_0^T \eta(s)ds}(\E_n\xi-\E_n\xi')^2_++M(t)\\
& +\int_t^T e^{\int_0^s \eta(\tau)d\tau}\\
&\quad\quad\times\biggl[2(\E_nY_s-\E_nY'_s)_+\E_n\left(f(s,Y_s,Z_s,U_s)-f'(s,Y'_s,Z'_s, U'_s)\right)\\
&\quad\quad\quad\ \ -\chi_{\{\E_nY_s-\E_nY'_s > 0\}}|\E_nZ_s-\E_nZ'_s|^2- \eta(s)(\E_nY_s-\E_nY'_s)_+^2\\
&\quad\quad\quad\quad-\int_{\{1/n\leq|x|\}}\left((\E_nY_s-\E_nY'_s+\E_nU_s(x)-\E_nU'_s(x))^2_+-(\E_nY_s-\E_nY'_s)^2_+\right.\\
&\quad\quad\quad\quad\quad\quad\quad\quad\quad\quad\quad\quad\quad\quad\quad\quad\left.-2(\E_nU_s(x)-\E_nU'_s(x))(\E_nY_s-\E_nY'_s)_+\right)\nu(dx)\biggr]ds.
\end{align*}
Here, $M(t)$ is a stochastic integral term with zero expectation which follows from  $Y,Y'\in\mathcal{S}^2$. Moreover, we used that on the set $\{\Delta^n Y_s > 0\}$ (where  $\Delta^n Y:=\E_n Y-\E_n Y'$) we have 
$(Y_s-Y'_s)_+=|Y_s-Y'_s|$. We denote further differences by  $\Delta^n \xi := \E_n\xi-\E_n\xi', \,\Delta^n Z := \E_nZ-\E_nZ'$ and $\Delta^n U:= \E_nU-\E_nU'$. Observing that the right hand side increases if we only consider $\Delta^n Y_s > 0$ in the $ds$-integral, we take means to come to

\begin{align*}%\label{TanMey}
&\E e^{\int_0^t \eta(s)ds}(\Delta^n Y_t)^2_+=\E e^{\int_0^T \eta(s)ds}(\Delta^n \xi)^2_+ \nonumber\\
&+\E\int_t^T e^{\int_0^s \eta(\tau)d\tau}\chi_{\{\Delta^n Y_s > 0\}}\nonumber\\
&\quad\quad\quad\times\biggl[2(\Delta^n Y_s)_+\E_n\left(f(s,Y_s,Z_s,U_s)-f'(s,Y'_s,Z'_s, U'_s)\right)   -|\Delta^n Z_s|^2- \eta(s)|\Delta^n Y_s|^2\nonumber\\
&\quad\quad\quad\quad\ \ - \int_{\{1/n\leq|x|\}}\left((\Delta^n Y_s+ \Delta^n U_s(x))^2_+-(\Delta^n Y_s)^2_+-2(\Delta^n U_s(x))(\Delta^n Y_s)_+\right)\nu(dx)\biggr]ds,
\end{align*} 
We split up the set $\{1/n\leq|x|\}$ into 
$$B_n(s) = B_n(\omega,s) =
\{1/n\leq|x|\}\cap\{\Delta^n U_s(x)\geq  -\Delta^n Y_s\}\text{ and its complement } B_n^c(s).$$

Taking into account that $\xi\leq \xi'\Rightarrow \E_n\xi\leq\E_n \xi'$ we estimate
\begin{align}\label{TanMeyf}
&\E e^{\int_0^t \eta(s)ds}(\Delta^n Y_t)^2_+\nonumber\\
&\leq\E\int_t^T e^{\int_0^s \eta(\tau)d\tau}\chi_{\{\Delta^n Y_s> 0\}}\nonumber\\
&\quad\quad\quad\times\!\biggl[2(\Delta^n Y_s)_+\E_n\left(f(s,Y_s,Z_s,U_s)-f'(s,Y'_s,Z'_s, U'_s)\right) -|\Delta^n Z_s|^2- \eta(s)|\Delta^n Y_s|^2\nonumber\\
& \quad\quad\quad\quad\ -\!\int_{B_n(s)}\!|\Delta^n U_s(x)|^2\nu(dx)+\int_{B_n^c(s)}\!\left((\Delta^n Y_s)^2_++2(\Delta^n U_s(x))(\Delta^n Y_s)_+\right)\nu(dx)\biggr]ds.
\end{align} 
}
We focus on $(\Delta^n Y_s)_+\E_n\left(f(s,Y_s,Z_s,U_s)-f'(s,Y'_s,Z'_s, U'_s)\right)$. By abbreviating $\Theta := (Y,Z,U)$, $\Theta' := (Y',Z',U')$ and by the assumption $f(s,\Theta') \le f'(s,\Theta')$ we get,
\begin{align}\label{flipdiff}
& (\Delta^n Y_s)_+\E_n\left(f(s,\Theta_s)-f'(s,\Theta'_s)\right) =(\Delta^n Y_s)_+\E_n\left(f(s,\Theta_s)-f(s,\Theta'_s)+f(s,\Theta'_s)-f'(s,\Theta'_s)\right)\nonumber\\
&\leq(\Delta^n Y_s)_+\E_n\left(f(s,\Theta_s)-f(s,\Theta'_s)\right).
\end{align}
Since \ref{A3ge2} implies the Lipschitz property in the $u$ and $z$-variables, we infer, inserting and subtracting the same terms,
\begin{align}\label{flip}
& (\Delta^n Y_s)_+\E_n\left(f(s,Y_s,Z_s,U_s)-f(s,Y'_s,Z'_s, U'_s)\right)\\
& =\!(\Delta^n Y_s)_+\E_n\Big(\!f(s,Y_s,Z_s,U_s)\!-\!f(s,Y'_s,Z'_s, U'_s)\!+\!\left(f(s,Y_s,\E_n Z_s,\E_n U_s)\!-\!f(s, Y'_s,\E_n Z'_s, \E_n U'_s)\right)\nonumber\\
&\quad-\left(f(s,Y_s,\E_n Z_s,\E_n U_s)-f(s, Y'_s,\E_n Z'_s, \E_n U'_s)\right)\Big)\nonumber\\
&\leq(\Delta^n Y_s)_+\E_n\left(f(s,Y_s,\E_n Z_s,\E_n U_s)-f(s,Y_s',\E_n Z'_s, \E_n U'_s)\right)\nonumber\\
&\quad+(\Delta^n Y_s)_+\beta(s)\left(|Z_s-\E_n Z_s|+|Z'_s-\E_n Z'_s|+\|U_s-\E_n U_s\|+\|U'_s-\E_n U'_s\|\right).\nonumber
\end{align}

We estimate, inserting and subtracting terms again, then using \eqref{gamma},
\begin{align}\label{flipgam}
&(\Delta^n Y_s)_+\E_n\left(f(s,Y_s,\E_n Z_s,\E_n U_s)-f(s,Y_s',\E_n Z'_s, \E_n U'_s)\right)\nonumber\\
&\leq  (\Delta^n Y_s)_+\E_n\left(f(s,Y_s,\E_n Z_s,\E_n U_s\chi_{B_n(s)}+\E_n U_s\chi_{B^c_n(s)})\right.\nonumber\\
&\quad\quad\quad\quad\quad\quad\quad\quad\quad\quad\quad\quad\quad\quad\quad\left.-\!f(s,Y_s',\E_n Z'_s, \E_n U'_s\chi_{B_n(s)}+\E_n U_s\chi_{B_n^c(s)})\right)_+\nonumber\\
& \quad\!+\!(\Delta^n Y_s)_+\E_n\left(f(s,Y_s,\E_n Z_s,\E_n U'_s\chi_{B_n(s)}\!+\E_n U_s\chi_{B^c_n(s)})\!\right.\nonumber\\
&\quad\quad\quad\quad\quad\quad\quad\quad\quad\quad\quad\quad\quad\quad\quad\left.-\!f(s,Y_s',\E_n Z'_s, \E_n U'_s\chi_{B_n(s)}\!+\E_n U'_s\chi_{B_n^c(s)})\right)\nonumber\\
&\leq (\Delta^n Y_s)_+\E_n\left(f(s,Y_s,\E_n Z_s,\E_n U_s\chi_{B_n(s)}+\E_n U_s\chi_{B^c_n(s)})\right.\nonumber\\
&\quad\quad\quad\quad\quad\quad\quad\quad\quad\quad\quad\quad\quad\quad\quad\left.-\!f(s,Y_s',\E_n Z'_s, \E_n U'_s\chi_{B_n(s)}+\E_n U_s\chi_{B_n^c(s)})\right)_+\nonumber\\
&\quad - \int_{B_n^c(s)}(\Delta^n Y_s)_+\Delta^n U_s\nu(dx).
\end{align}

Next we apply Jensen's inequality in two dimensions for the product of positive random variables and also \ref{A3ge2} and Young's inequality to arrive at
\begin{align}\label{flipa3}
&(\Delta^n Y_s)_+\E_n\left(f(s,Y_s,\E_n Z_s,\E_n U_s\chi_{B_n(s)}+\E_n U_s\chi_{B^c_n(s)})\right.\nonumber\\
&\quad\quad\quad\quad\quad\quad\quad\quad\quad\quad\quad\quad\quad\quad\quad\left.-f(s,Y_s',\E_n Z'_s, \E_n U'_s\chi_{B_n(s)}+\E_n U_s\chi_{B_n^c(s)})\right)_+\nonumber\\
&\leq \E_n\left[\Delta Y_s \left(f(s,Y_s,\E_n Z_s,\E_n U_s\chi_{B_n(s)}+\E_n U_s\chi_{B^c_n(s)})\right.\right.\nonumber\\
&\quad\quad\quad\quad\quad\quad\quad\quad\quad\quad\quad\quad\quad\quad\quad\left.\left.-f(s,Y_s',\E_n Z'_s, \E_n U'_s\chi_{B_n(s)}+\E_n U_s\chi_{B_n^c(s)})\right)\right]_+\nonumber\\
&\leq \E_n\alpha(s)\rho((\Delta Y_s)_+^2)+2\E_n\beta(s)^2(\Delta Y_s)_+^2+\frac{|\Delta^n Z_s|^2}{4}+\frac{\E_n\|\Delta^n U_s\chi_{B_n(s)}\|^2}{4}.
\end{align}

Taking together inequalities \eqref{flipdiff}, \eqref{flip}, \eqref{flipgam} and \eqref{flipa3},
we get with Young's inequality again that
\begin{align*}
& (\Delta^n Y_s)_+\E_n\left(f(s,Y_s,Z_s,U_s)-f'(s,Y'_s,Z'_s, U'_s)\right)\\
&\leq \E_n\alpha(s)\rho((\Delta Y_s)_+^2)+2\E_n\beta(s)^2(\Delta Y_s)_+^2+\frac{|\Delta^n Z_s|^2}{4}+\frac{\E_n\|\Delta^n U_s\chi_{B_n(s)}\|^2}{4}\nonumber\\
&\quad -\int_{B_n^c(s)}(\Delta^n Y_s)_+\Delta^n U_s\nu(dx)\\
&+4\beta(s)^2(\Delta^n Y_s)^2_++\frac{1}{4}\left(|Z_s-\E_n Z_s|^2+|Z'_s-\E_n Z'_s|^2+\|U_s-\E_n U_s\|^2+\|U'_s-\E_n U'_s\|^2\right).
\end{align*}
Therefore, \eqref{TanMeyf} evolves to 
\begin{align*}
&\E e^{\int_0^t \eta(s)ds}(\Delta^n Y_t)^2_+\\
&\leq \ \E\int_t^T e^{\int_0^s  \eta (\tau)d\tau}\chi_{\{\Delta^n Y_s> 0\}}\biggl[2\E_n\alpha(s){\rho}((\Delta Y_s)_+^2)+4\E_n\beta(s)^2(\Delta Y_s)_+^2+\frac{|\Delta^n Z_s|^2}{2}\\
& \quad+\frac{\|\Delta^n U_s\chi_{B_n(s)}\|^2}{2} -\int_{B_n^c(s)}2(\Delta^n Y_s)_+ \Delta^n U_s(x)\nu(dx)+8\beta(s)^2(\Delta^n Y_s)^2_+\\
&\quad+\frac{1}{2}\left(|Z_s-\E_n Z_s|^2+|Z'_s-\E_n Z'_s|^2+\|U_s-\E_n U_s\|^2+\|U'_s-\E_n U'_s\|^2\right)-|\Delta^n Z_s|^2\\
&\quad-\eta(s)|\Delta^n Y_s|^2-\!\!\int_{B_n(s)}\!|\Delta^n U_s(x)|^2\nu(dx)+\!\int_{_{B_n^c(s)}}\!\!\!\left((\Delta^n Y_s)^2_++2(\Delta^n Y_s)_+(\Delta^n U_s(x))\right)\nu(dx)\biggr]ds.\nonumber
\end{align*} 
We cancel out terms and end this step with the estimate\changed{
\begin{align}\label{estimate-conditionaloutside}
&\E e^{\int_0^t \eta(s)ds}(\Delta^n Y_t)^2_+\nonumber\\
&\leq \ \E\int_t^T e^{\int_0^s  \eta (\tau)d\tau}\chi_{\{\Delta^n Y_s> 0\}}\biggl[2\E_n\alpha(s){\rho}((\Delta Y_s)_+^2)+4\E_n\beta(s)^2(\Delta Y_s)_+^2\nonumber\\
&\quad+\frac{1}{2}\left(|Z_s-\E_n Z_s|^2+|Z'_s-\E_n Z'_s|^2+\|U_s-\E_n U_s\|^2+\|U'_s-\E_n U'_s\|^2\right)\nonumber\\
&\quad+8\beta(s)^2(\Delta^n Y_s)^2_+-\eta(s)|\Delta^n Y_s|^2+\int_{_{B_n^c(s)}}(\Delta^n Y_s)^2_+\nu(dx)\biggr]ds.
\end{align} }

\textbf{Step 3:}

We assume without loss of generality, that the integrals 
\begin{align*}
\E\int_0^Te^{\int_0^s  \eta (\tau)d\tau}\chi_{\{\Delta Y_s> 0\}}\alpha(s){\rho}((\Delta Y_s)_+^2)ds=:\delta_\rho
\end{align*}
and
\begin{align*} 
&\E\int_0^Te^{\int_0^s  \eta (\tau)d\tau}\chi_{\{\Delta Y_s> 0\}}\beta(s)^2(\Delta Y_s)_+^2ds=:\delta_y \,.
\end{align*} 
are positive numbers. All other cases would simplify the proof.

Since $\E_n Y_s\to Y_s$ a.s. for all $s$, dominated convergence shows that also
$$\E\int_0^Te^{\int_0^s  \eta (\tau)d\tau}\left|\chi_{\{\Delta^n Y_s> 0\}}\alpha(s){\rho}((\Delta Y_s)_+^2)-\chi_{\{\Delta^n Y_s> 0\}}\alpha(s){\rho}((\Delta^n Y_s)_+^2)\right|ds,$$
$$\E\int_0^Te^{\int_0^s  \eta (\tau)d\tau}\left|\chi_{\{\Delta Y_s> 0\}}\alpha(s){\rho}((\Delta Y_s)_+^2)-\chi_{\{\Delta^n Y_s> 0\}}\alpha(s){\rho}((\Delta^n Y_s)_+^2)\right|ds$$
and 
$$\E\int_0^Te^{\int_0^s  \eta (\tau)d\tau}\left|\chi_{\{\Delta Y_s> 0\}}\alpha(s){\rho}((\Delta Y_s)_+^2)-\chi_{\{\Delta^n Y_s> 0\}}\alpha(s){\rho}((\Delta Y_s)_+^2)\right|ds$$
converge to zero. For domination we use
\begin{align*}
&\E\int_0^Te^{\int_0^s  \eta (\tau)d\tau}\alpha(s)\left({\rho}((\Delta Y_s)_+^2)+{\rho}\Big(\sup_{n\geq 0}\E_n(\Delta Y_s)_+^2\Big)\right)ds\\
&\leq e^C\|\alpha\|_{L^1([0,T])}(1+b\|Y\|^2_{\mathcal{S}^2})\\
&\quad+e^C\|\alpha\|_{L^1([0,T])}\left((1+b)(\sup_{n\geq 0}\E_n(\sup_{t\in{[0,T]}}\Delta Y_t))^2\right)\\
&\leq e^C\|\alpha\|_{L^1([0,T])}(2+5b\|Y\|^2_{\mathcal{S}^2})<\infty,
\end{align*}
where we applied that $\int_0^T \eta(s)ds<C$ a.s., Doob's martingale inequality and that there is $b>0$ such that for $x\geq 0: \rho(x)\leq 1+b x$.

For $m\geq 0$ with $\delta_\rho-\frac{1}{m}>0$ let us now choose $N_m\in\mathbb{N}$ large enough, such that for $n\geq N_m$:
$$\E\int_0^Te^{\int_0^s  \eta (\tau)d\tau}\left|\chi_{\{\Delta^n Y_s> 0\}}\alpha(s){\rho}((\Delta Y_s)_+^2)-\chi_{\{\Delta^n Y_s> 0\}}\alpha(s){\rho}((\Delta^n Y_s)_+^2)\right|ds<\delta_\rho-\frac{1}{m}$$ and
$$\E\int_0^Te^{\int_0^s  \eta (\tau)d\tau}\chi_{\{\Delta^n Y_s> 0\}}\alpha(s){\rho}((\Delta^n Y_s)_+^2)ds\geq \delta_\rho-\frac{1}{m}.$$
For such an $n$ we get that 
\begin{align*}
&\E\int_0^Te^{\int_0^s  \eta (\tau)d\tau}\chi_{\{\Delta^n Y_s> 0\}}\alpha(s){\rho}((\Delta Y_s)_+^2)ds\\
&\leq 2\E\int_0^Te^{\int_0^s  \eta (\tau)d\tau}\chi_{\{\Delta^n Y_s> 0\}}\alpha(s){\rho}((\Delta^n Y_s)_+^2)ds.
\end{align*}
In the same way, one can choose $m,N_m\in\mathbb{N}$ also large enough such that for all $n\geq N_m$,
\begin{align}\label{anti-jensen}
&\E\int_0^Te^{\int_0^s  \eta (\tau)d\tau}\chi_{\{\Delta^n Y_s> 0\}}\beta(s)^2(\Delta Y_s)_+^2ds\nonumber\\
&\quad\quad\leq 2\E\int_0^Te^{\int_0^s  \eta (\tau)d\tau}\chi_{\{\Delta^n Y_s> 0\}}\beta(s)^2(\Delta^n Y_s)_+^2ds.
\end{align}
Similarly, by martingale convergence $\E_n Z_s\to Z_s$ and a domination argument, we can conclude that for $n\geq N_m$ ($N_m$ may have to be rechosen large enough),
\begin{align}\label{anti-jensen2}
&\E\int_0^Te^{\int_0^s  \eta (\tau)d\tau}\chi_{\{\Delta^n Y_s> 0\}}| Z_s-\E_nZ_s|^2ds \leq \E\int_0^Te^{\int_0^s  \eta (\tau)d\tau}\chi_{\{\Delta^n Y_s> 0\}}\beta(s)^2|\Delta^n Y_s|^2ds\quad\text{and}\nonumber\\
&\E\int_0^Te^{\int_0^s  \eta (\tau)d\tau}\chi_{\{\Delta^n Y_s> 0\}}\| U_s-\E_n U_s\|^2ds \leq \E\int_0^Te^{\int_0^s  \eta (\tau)d\tau}\chi_{\{\Delta^n Y_s> 0\}}\beta(s)^2|\Delta^n Y_s|^2ds,
\end{align} 
since the left hand sides tend to zero, while the right hand sides converge to $\delta_y$.
The same estimates hold for $Z'$ and $U'$ as well.

Hence, applying \eqref{anti-jensen} and \eqref{anti-jensen2} to \eqref{estimate-conditionaloutside} yields

\begin{align}\label{estimate-conditionalinside}
\E e^{\int_0^t \eta(s)ds}(\Delta^n Y_t)^2_+
&\leq \ \E\int_t^T e^{\int_0^s  \eta (\tau)d\tau}\chi_{\{\Delta^n Y_s> 0\}}\biggl[4\alpha(s){\rho}((\Delta^n Y_s)_+^2)\nonumber\\
&\quad+18\beta(s)^2(\Delta^n Y_s)^2_+-\eta(s)|\Delta^n Y_s|^2+\int_{_{B_n^c}}(\Delta^n Y_s)^2_+\nu(dx)\biggr]ds.
\end{align}

\textbf{Step 4:}

Bounding $\int_{B_n^c(s)}\!(\Delta^n Y_s)^2_+\nu(dx)$ by $\nu(\{1/n\leq |x|\})(\Delta^n Y_s)^2_+$ in \eqref{estimate-conditionalinside}, leads us to
\begin{align*}
\E e^{\int_0^t \eta(s)ds}(\Delta^n Y_t)^2_+
&\leq \ \E\int_t^T e^{\int_0^s  \eta (\tau)d\tau}\chi_{\{\Delta^n Y_s> 0\}}\biggl[4\alpha(s){\rho}((\Delta^n Y_s)_+^2)\nonumber\\
&\quad+(\nu(\{1/n\leq |x|\})+18\beta(s)^2)(\Delta^n Y_s)^2_+-\eta(s)|\Delta^n Y_s|^2\biggr]ds.
\end{align*} 
It remains, recalling that $\eta = 18 \beta^2$,
\begin{align*}
&\E e^{\int_0^t \eta (s)ds}(\Delta^n Y_t)^2_+\\
&\ \ \leq\E\int_t^T e^{\int_0^s \eta (\tau)d\tau}\biggl[4\alpha(s){\rho}((\Delta^n Y_s)_+^2)+\nu(\{1/n\leq |x|\})(\Delta^n Y_s)^2_+\biggr]ds\nonumber\\
&\ \ \leq \E\int_t^T e^{\int_0^s \eta (\tau)d\tau}\biggl[4(\alpha(s)\vee 1){\left(\rho+\nu(\{1/n\leq |x|\})\mathrm{id}\right)}((\Delta^n Y_s)_+^2)ds.
\end{align*}
The term $e^{\int_0^T \eta(\tau)d\tau}$ is $\mathbb{P}$-a.s. bounded by a constant $C>0$. Thus, by the concavity of $\rho_n:=\rho+\nu(\{1/n\leq |x|\})\mathrm{id}$, which satisfies the same assumptions as $\rho$, we arrive at
\begin{align*}
&\E(\Delta^n Y_t)^2_+\leq\E e^{\int_0^t \eta (s)ds}(\Delta^n Y_t)^2_+\leq\int_t^T 4C(\alpha(s)\vee 1){\rho_n}(\E(\Delta^n Y_s)_+^2)ds.\nonumber
\end{align*}
Then, the Bihari-LaSalle inequality (\autoref{bihari-prop}) shows that $\E(\Delta^n Y_t)^2_+=0$ for all $t\in{[0,T]}$.

\bigskip
\textbf{Step 5:}

\changed{Steps 1-4 granted that $\E_n Y_t \leq \E_n Y_t'$ for $n$ greater than a certain value, for all $t \in [0,T]$. By martingale convergence, $\E_n Y_t$ converges almost surely to the solution $Y_t$ of $(\xi,f)$ at time $t$ and  $\E_n Y_t'$ converges to the solution $Y_t'$ of $(\xi',f')$. Hence, in the limit we have $Y_t \leq Y_t',$ and the theorem is proven.}
\end{proof}

In the following \autoref{comparisonle2} we state a version of the above theorem for the case $1<p<2$. The difference to \autoref{comparison} is that here we cannot compare the generators on the solution only. If one wants to keep the comparison of the generators on the solution but accepts a slightly stronger condition than \changed{\eqref{gamma}}, given as ($\mathrm{H}_{\mathrm{comp}}$)in \cite{Kruse},
 \begin{itemize}[label={(A$\gamma$')}]
	\item 
	\quad $f(s,y,z,u)- f(s,y,z,u')\leq \int_{\R_0} (u(x) - u'(x)) \gamma_t(x) \nu(dx), \quad\mathbb{P}\otimes\lambda$-a.e.\newline
	\quad for a predictable process $\gamma = \gamma^{y,z,u,u'}$, such that $-1 \le \gamma_t(x)$ and $|\gamma_t(u)| \le \vartheta(u)$, where $\vartheta \in L^2(\nu)$,
\end{itemize} 
 then the proof of \cite[Proposition 4]{Kruse} can also be conducted for generators satisfying the conditions \ref{A3ge2} or \ref{A3le2}.

\begin{theorem}[Comparison, $p>1$]\label{comparisonle2}
	Let $p, p' > 1$ and $(Y,Z,U)$ be the $\lp$-solution to $(\xi,f)$ and $(Y',Z',U')$ be the $L^{p'}$-solution to $(\xi',f')$. Furthermore let $f$ and $f'$ satisfy \ref{A1} and \ref{A3ge2} or \ref{A3le2} for the according $p,p'$. If the following assumptions hold
	\begin{enumerate}[label=({\roman*})]
		\item
		$\xi\leq \xi'$, $\mathbb{P}$\text{-a.s.},
		\item
		$f(s,y,z,u)\leq f'(s,y,z,u)$, for all $(y,z,u)\in\mathbb{R}\times\mathbb{R}\times L^2(\nu),$ for $\mathbb{P}\otimes\lambda$\text{-a.a.} $(\omega,s)\in\Omega\times{[0,T]}$   
		and
		% \item[label={(A$\gamma$)}]
		\item[(A$\gamma$)] for all $u,u'\in L^2(\nu)$ with $u'\geq u$
		\begin{align}
		\label{gammap1} 
		f(s,y,z,u)- f(s,y,z,u')\leq \int_{\mathbb{R}_0}(u'(x)-u(x))\nu(dx), \quad\mathbb{P}\otimes\lambda\text{-a.e},
		\end{align}
	\end{enumerate} 	
	then for all $t \in [0,T]$, we have $\mathbb{P}$\text{-a.s.}, 
	$$
	Y_t\leq Y'_t.
	$$
	
	The same assertion follows from an equivalent formulation for $f'$, requiring that \eqref{gammap1} \changed{holds} for $f'$.
\end{theorem}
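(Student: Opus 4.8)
The plan is to split according to $\bar p := \min(p,p')$. Since we work on a probability space, $\mathcal S^{p}\subseteq\mathcal S^{\bar p}$ and $\mathcal S^{p'}\subseteq\mathcal S^{\bar p}$, so both $Y$ and $Y'$ lie in $\mathcal S^{\bar p}$, and $\Delta Y:=Y-Y'$ lies in $\mathcal S^{\bar p}$ with $(\Delta Y)_+^{\bar p}$ integrable. If $\bar p\ge 2$, then $p,p'\ge 2$ and both generators satisfy \ref{A3ge2}; moreover the everywhere comparison~(ii) in particular gives $f(s,Y'_s,Z'_s,U'_s)\le f'(s,Y'_s,Z'_s,U'_s)$, and \eqref{gammap1} coincides with hypothesis~(A$\gamma$) of \autoref{comparison}. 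Hence in this case the assertion is exactly \autoref{comparison}. It remains to treat $\bar p<2$, which is the genuinely new case: here the reduction to an $L^2$-setting used in \autoref{comparison} is no longer available, so I would mirror that proof with the square replaced by the $\bar p$-th power of the positive part, using the regularisation technique of \autoref{supproppleq2}.

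First I would reduce to a finite L\'evy measure by conditioning with $\E_n$ and writing the BSDEs for the progressively measurable versions of $\E_n Y$, $\E_n Y'$ as in Step~1 of \autoref{comparison}, and assume $\mu=0$ without loss of generality via the change of variables $e^{\int_0^\cdot\mu(s)ds}(Y,Z,U)$ (and its primed analogue). Writing $\Theta_s:=(Y_s,Z_s,U_s)$, $\Theta'_s:=(Y'_s,Z'_s,U'_s)$ and $\Delta^n Y:=\E_n Y-\E_n Y'$, I would apply the It\^o/Tanaka--Meyer formula to $(\Delta^n Y_t)_+^{\bar p}$ through the smooth approximants $u_\varepsilon(x)=((x_+)^2+\varepsilon)^{\bar p/2}$ and the limit $\varepsilon\to 0$. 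On $\{\Delta^n Y_s>0\}$ this produces the drift term $\bar p(\Delta^n Y_s)_+^{\bar p-1}\,\E_n\!\big(f(s,\Theta_s)-f'(s,\Theta'_s)\big)$, nonnegative curvature terms in $Z$ and $U$ on the left-hand side, and a zero-mean martingale.

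On $\{\Delta^n Y_s>0\}$ the everywhere comparison~(ii) lets me replace $f'(s,\Theta'_s)$ by $f(s,\Theta'_s)$, and then \ref{A3le2} applied to $f$ between $\Theta_s$ and $\Theta'_s$ bounds the drift by an $\alpha(s)\rho(|\Delta Y_s|^{\bar p})$ term, a $|\Delta Y_s|^{\bar p}$ term, and the Lipschitz contributions $|\Delta Y_s|^{\bar p-1}(\beta_1(s)|Z_s-Z'_s|+\beta_2(s)\|U_s-U'_s\|)$; the latter are absorbed by Young's inequality into the curvature terms exactly as in \autoref{supproppleq2}. The $U$-jump contributions I would control with \eqref{gammap1} and the splitting $\{1/n\le|x|\}=B_n(s)\cup B_n^c(s)$ of \autoref{comparison}, while the conditional-expectation approximation errors $\|U_s-\E_n U_s\|$, $|Z_s-\E_n Z_s|$ and their primed analogues are dominated by martingale convergence together with the anti-Jensen comparison of $(\Delta Y_s)_+$ with $(\Delta^n Y_s)_+$ from Step~3 of \autoref{comparison}. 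Taking expectations and using the concavity of $\rho$ and \autoref{remA3}\ref{remA3:A3bounds}, this yields
\begin{align*}
\E(\Delta^n Y_t)_+^{\bar p}\le\int_t^T C(\alpha(s)\vee 1)\,\rho_n\!\big(\E(\Delta^n Y_s)_+^{\bar p}\big)\,ds,\qquad \rho_n:=\rho+\nu(\{1/n\le|x|\})\,\mathrm{id},
\end{align*}
so the Bihari--LaSalle inequality (\autoref{bihari-prop}) forces $\E(\Delta^n Y_t)_+^{\bar p}=0$ for every $t$. Letting $n\to\infty$ and using $\E_n Y_t\to Y_t$, $\E_n Y'_t\to Y'_t$ by martingale convergence gives $Y_t\le Y'_t$.

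The hard part will be the justification of the It\^o/Tanaka--Meyer step for $\bar p<2$: since $x\mapsto(x_+)^{\bar p}$ has an unbounded second derivative at the origin, the formula must be obtained through the approximants $u_\varepsilon$ and a careful passage to the limit, mirroring the finiteness arguments of \autoref{supproppleq2} (convexity and boundedness of $u_\varepsilon''$ for fixed $\varepsilon$, and the bound $|Y_{s-}|\vee|Y_{s-}+U_s(x)|\le 4\sup_{t}|Y_t|$). Intertwined with this is the treatment of the jump terms carrying the weight $(|Y_{s-}|\vee|Y_{s-}+U_s(x)|)^{\bar p-2}$, whose sign and integrability must be handled jointly with the compensator via \eqref{gammap1}; the conversions between $\rho(|\cdot|^{\bar p})$ and $\rho(|\cdot|^2)$ needed along the way are furnished by \autoref{remA3}\ref{remA3:A3bounds}.
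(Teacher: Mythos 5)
Your reduction for $\min(p,p')\ge 2$ is fine, but the core case $\bar p:=p\wedge p'<2$ — which you rightly identify as the new content — has a genuine gap, and it sits exactly in the steps you declare routine. After the Tanaka--Meyer/It\^o step at power $\bar p$, the curvature terms carry the singular weights $(\Delta^n Y_s)_+^{\bar p-2}\chi_{\{\Delta^n Y_s>0\}}$ and $(|\cdot|\vee|\cdot|)^{\bar p-2}$ for the jumps. The Lipschitz contributions and, worse, the conditional-expectation errors $(\Delta^n Y_s)_+^{\bar p-1}\beta(s)\left(|Z_s-\E_n Z_s|+\|U_s-\E_n U_s\|+\dots\right)$ must then be absorbed by Young's inequality either into those curvature terms — which produces error integrals weighted by the unbounded factor $(\Delta^n Y_s)_+^{\bar p-2}$, so that martingale convergence of the errors cannot be exploited near $\{\Delta^n Y_s\approx 0\}$ — or with the conjugate couple $(\bar p/(\bar p-1),\bar p)$, which requires $\beta_i^{\bar p/(\bar p-1)}$ to be time-integrable, whereas \ref{A3le2} grants only $\beta_1^2$ and $\beta_2^q$. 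Recall that even in the unconditioned a priori estimate the term $\beta_2\|u\|$ resisted curvature absorption and forced the $q>2$/H\"older/partition-of-$[0,T]$/iterated-Bihari detour in \autoref{supproppleq2}; your sketch adapts none of this to the comparison setting, where it is further entangled with $\E_n$. Moreover, the Jensen step \eqref{flipa3} of \autoref{comparison}, which moves $\Delta Y_s$ inside $\E_n$, relies on the first power of $(\Delta^n Y_s)_+$; with the weight $(\Delta^n Y_s)_+^{\bar p-1}$ and $x\mapsto x_+^{\bar p-1}$ concave for $\bar p<2$, the required inequality points the wrong way. Finally, note that in Step 4 of \autoref{comparison} the Bihari--LaSalle inequality is applied for each fixed $n$ with \emph{zero} additive constant; since $\nu(\{1/n\le|x|\})\to\infty$ for infinite $\nu$, any scheme that leaves a vanishing-but-positive error $\epsilon_n$ cannot conclude against the degenerating kernel $\rho_n=\rho+\nu(\{1/n\le|x|\})\,\mathrm{id}$.

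A telling symptom: your argument uses hypothesis (ii) only along $\Theta'=(Y',Z',U')$, so if it worked it would prove comparison of the generators \emph{on the solution} for $\bar p<2$ — precisely what the paper states it cannot achieve in this range (see the discussion between \autoref{comparison} and \autoref{comparisonle2}, where the everywhere-comparison (ii) or the stronger condition (A$\gamma$') is demanded for that purpose). The paper's actual proof avoids the $\bar p$-power Tanaka--Meyer argument entirely: it truncates $\xi,\xi',f,f'$ with the monotone $1$-Lipschitz cap $x\mapsto nx/(|x|\vee n)$ — this is where the everywhere-comparison (ii) is indispensable, since the order $f_n\le f'_n$ must hold at the truncated solutions, which are a priori unrelated to $(Y,Z,U)$ and $(Y',Z',U')$ — observes that the truncated solutions are $L^2$-solutions so that \autoref{comparison} applies to them, and then establishes $Y_n\to Y$ in $\mathcal{S}^{p\wedge 2}$ via \autoref{supproppleq2} combined with the key finiteness $\E\left[\int_0^T|f(s,Y_s,Z_s,U_s)|ds\right]^p<\infty$, obtained by applying the limit It\^o formula to the modulus function. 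To repair your proposal you would either have to supply the missing absorption mechanism for $\bar p<2$ (including a substitute for \eqref{flipa3} and the localization making the local-martingale terms vanish in expectation, which you assert but do not show), or switch to this truncation-and-stability route.
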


\begin{proof}

We approximate the generators and terminal conditions by
$\xi_n:=\frac{n}{|\xi|\vee n}\xi, \xi'_n:=\frac{n}{|\xi'|\vee n}\xi'$ and 
\begin{align*}
&f_n(t,y,z,u):=\frac{n}{|f(t,y,z,u)|\vee n}f(t,y,z,u),\\ &f'_n(t,y,z,u):=\frac{n}{|f'(t,y,z,u)|\vee n}f'(t,y,z,u).
\end{align*}
This procedure preserves order relations. Furthermore, the generators $f_n, f'_n$ satisfy \ref{A1}, \ref{A3ge2} or \ref{A3le2} with respect to \changed{their coefficients}. Also \eqref{gammap1} remains satisfied for $f_n$. 

\changed{
Thus, the solutions $Y_n$ and $Y'_n$ of all these equations satisfy for all $t\in {[0,T]}$ $Y_{n,t}\leq Y'_{n,t}, \mathbb{P}\text{-a.s.}$ since (by the boundedness of $\xi_n,\xi'_n,f_n, f'_n$ and since \autoref{remA3}, \ref{remA3le2} implies \ref{A3ge2} for $p=2$ for the $f_n, f'_n$) they are also $L^2$-solutions. In the following steps, we will show convergence of those solutions to $Y$ in $\mathcal{S}^{p\wedge 2}$ and $Y'$ in $\mathcal{S}^{p'\wedge 2}$. Since for $p>2$ (or $p'>2$) the according solution is also an $L^2$-solution, convergence in $\mathcal{S}^2$ will suffice. Therefore, we will concentrate in the sequel on the case $p\leq 2$ for $Y$ (the case for $p'$ and $Y'$ works the same way).\bigskip

\textbf{Step 1:}

Consider the difference of the BSDEs for $Y_n$ and $Y$ (the convergence $Y'_n\to Y'$ can be shown in exactly the same way), with solutions $(Y_n,Z_n,U_n)$ and $(Y,Z,U)$,
\begin{align*}
Y_{n,t}-Y_t=&\xi_n-\xi+\int_t^T\left(f_n(s,Y_{n,s},Z_{n,s},U_{n,s})-f(s,Y_s,Z_s,U_s)\right)ds-\int_t^T(Z_{n,s}-Z_s)dW_s\\
&-\int_{]t,T]\times\R_0}\left(U_{n,s}(x)-U_s(x)\right)\tilde{N}(ds,dx),
\end{align*}
which can be written as a BSDE for $(Y_n-Y, Z_n-Z, U_n-U)$ using the generator
$$(s,y,z,u)\mapsto f_n(s,y+Y_s,z+Z_s,u+U_s)-f(s,Y_s,Z_s,U_s).$$
Then, \autoref{supproppleq2}, whose assumptions \ref{A1}, \ref{a3le2} are met for $\xi_n-\xi$ and this generator, yields 
\begin{align*}
&\|Y_n-Y\|_{\mathcal{S}^p}^p+\|Z_n-Z\|_{L^p(W)}^p+\|U_n-U\|_{L^p(\tilde{N})}\\
&\leq h\left(\E|\xi_n-\xi|^p+\E\left[\int_0^T\left|f_n (s,Y_{s},Z_{s},U_{s})-f(s,Y_s ,Z_s, U_s)\right|ds\right]^p\right).
\end{align*}
Clearly $\xi_n$ converges to $\xi$ in $L^p$. To obtain the desired convergence of $Y_n$ to $Y$ we need that $\E\left[\int_0^T\left|f_n (s,Y_{s},Z_{s},U_{s})-f(s,Y_s ,Z_s, U_s)\right|ds\right]^p\to 0$.

This expression can be written as 
 \begin{align*}
&\E\left[\int_0^T\left|f_n (s,Y_{s},Z_{s},U_{s})-f(s,Y_s ,Z_s, U_s)\right|ds\right]^p\\
&=\E\left[\int_0^T\left|f_n (s,Y_{s},Z_{s},U_{s})-f(s,Y_s ,Z_s, U_s)\right|\chi_{\{|f(s,Y_s ,Z_s, U_s)|>n\}}ds\right]^p,
 \end{align*}
which we can bound by
\begin{align*}
\E\left[\int_0^T2\left|f(s,Y_s ,Z_s, U_s)\right|\chi_{\{|f(s,Y_s ,Z_s, U_s)|>n\}}ds\right]^p.
\end{align*}
To show that the above sequence tends to $0$, it remains to show that
\begin{align*}
\E\left[\int_0^T\left|f(s,Y_s ,Z_s, U_s)\right|ds\right]^p<\infty,
\end{align*}
which we will do by a method appearing in the Brownian setting for  scalar BSDEs in \cite[Remark 4.3]{briand2003lp}. This will be the content of the next step.}
\bigskip

\changed{\textbf{Step 2:}

The limit version of It\^o's formula (see \cite[Corollary 1]{Kruse} and the estimate \cite[Lemma 9]{Kruse}), applied to the modulus function $|\cdot|$ and the BSDE $(\xi,f)$, together with
 immediate estimates yields 
\begin{align*}
|Y_0|\leq &|\xi|+\int_0^T\mathrm{sign}(Y_s)\left(f(s,Y_s,Z_s)-f_0(s)\right)ds+\int_0^T\mathrm{sign}(Y_s)f_0(s)ds\\
&-p\int_0^T\mathrm{sign}(Y_s)Z_sdW_s-p\int_{]0,T]\times\R_0}\mathrm{sign}(Y_{s-})U_s(x)\tilde{N}(ds,dx).
\end{align*}
Hence, for the positive and negative part of $\mathrm{sign}(Y_s)\left(f(s,Y_s,Z_s)-f_0(s)\right)$ we get
\begin{align*}
&\int_0^T\left[\mathrm{sign}(Y_s)\left(f(s,Y_s,Z_s)-f_0(s)\right)\right]_-ds\\
&\leq |\xi|+\int_0^T\left[\mathrm{sign}(Y_s)\left(f(s,Y_s,Z_s)-f_0(s)\right)\right]_+ds+\int_0^T\left|f_0(s)\right|ds\\
&\quad-p\int_0^T\mathrm{sign}(Y_s)Z_sdW_s-p\int_{]0,T]\times\R_0}\mathrm{sign}(Y_{s-})U_s(x)\tilde{N}(ds,dx).
\end{align*}
Adding the integral $\int_0^T\left[\mathrm{sign}(Y_s)\left(f(s,Y_s,Z_s)-f_0(s)\right)\right]_+ds$ on both sides, we get 
 \begin{align*}
&\int_0^T\left|f(s,Y_s,Z_s)-f_0(s)\right|ds\\
&\leq |\xi|+2\int_0^T\left[\mathrm{sign}(Y_s)\left(f(s,Y_s,Z_s)-f_0(s)\right)\right]_+ds+\int_0^T\left|f_0(s)\right|ds\\
&\quad-p\int_0^T\mathrm{sign}(Y_s)Z_sdW_s-p\int_{]0,T]\times\R_0}\mathrm{sign}(Y_{s-})U_s(x)\tilde{N}(ds,dx).
\end{align*}
By \ref{A3le2}, we come to
\begin{align*}
&\int_0^T\left|f(s,Y_s,Z_s)-f_0(s)\right|ds\\
&\leq |\xi|+2\int_0^T \chi_{\{ Y_s \neq 0\}}\left(\alpha(s)\rho\left(|Y_s|^p\right)|Y_s|^{1-p}+\mu(s)|Y_s|+\beta_1(s)|Z
_s|+\beta_2(s)\|U_s\|\right)ds+\int_0^T\left|f_0(s)\right|ds\\
&\quad-p\int_0^T\mathrm{sign}(Y_s)Z_sdW_s-p\int_{]0,T]\times\R_0}\mathrm{sign}(Y_{s-})U_s(x)\tilde{N}(ds,dx).
\end{align*}
Since $\lim_{x\to 0}\tfrac{\rho(x^p)}{x^{p-1}}=0$, there is a value $r>0$ such that $\tfrac{\rho(x^p)}{x^{p-1}}\leq 1$, for $x \le r$. Additionally, from the concavity of $\rho$ it follows that for $x\geq r$ we have $\rho(x)\leq\tfrac{\rho(r)}{r}x$. This means that for $x\geq 0$,
\begin{align*}
\rho(x^p)x^{1-p}\leq 1\chi_{\{x\leq r\}}+\frac{\rho(r)}{r}x\chi_{\{x\geq r\}}\leq 1+\frac{\rho(r)}{r}x.
\end{align*}}
\changed{
With this inequality for the function $\rho$ we arrive at
\begin{align*}
&\int_0^T\left|f(s,Y_s,Z_s)-f_0(s)\right|ds\\
&\leq |\xi|+2\!\int_0^T\!\!\alpha(s)ds+2\!\int_0^T\!\!\left(\!\left(\alpha(s)\frac{\rho(r)}{r}+\mu(s)\right)|Y_s|+\beta_1(s)|Z
_s|+\beta_2(s)\|U_s\|\right)ds+\!\int_0^T\!\left|f_0(s)\right|ds\\
&\quad+p\left|\int_0^T\mathrm{sign}(Y_s)Z_sdW_s\right|+p\left|\int_{]0,T]\times\R_0}\mathrm{sign}(Y_{s-})U_s(x)\tilde{N}(ds,dx)\right|.
\end{align*}
We take the $p$-th power and expectations, then estimate with a constant $c>0$,
\begin{align}\label{eq:intgeneratorpfinite}
&\E\left[\int_0^T\left|f(s,Y_s,Z_s)-f_0(s)\right|ds\right]^p\notag\\
&\leq c\E\Biggl(1+|\xi|^p+\left[\int_0^T\left(\alpha(s)\frac{\rho(r)}{r}+\mu(s)\right)|Y_s|ds\right]^p+\left[\int_0^T\beta_1(s)|Z
_s|ds\right]^p+\left[\int_0^T\beta_2(s)\|U_s\|ds\right]^p\notag\\
&\quad+\left[\int_0^T\left|f_0(s)\right|ds\right]^p+\left|\int_0^T\mathrm{sign}(Y_s)Z_sdW_s\right|^p+\left|\int_{]0,T]\times\R_0}\mathrm{sign}(Y_{s-})U_s(x)\tilde{N}(ds,dx)\right|^p\Biggr).
\end{align}
By the Cauchy-Schwarz inequality, 
\begin{align*}
&\E\left[\int_0^T\beta_1(s)|Z
_s|ds\right]^p+\E\left[\int_0^T\beta_2(s)\|U_s\|ds\right]^p\\
&\leq\E\left[\int_0^T \beta_1(s)^2ds\right]^\frac{p}{2}\left[\int_0^T |Z_s|^2ds\right]^\frac{p}{2}+\E\left[\int_0^T \beta_2(s)^2ds\right]^\frac{p}{2}\left[\int_0^T \|U_s\|^2ds\right]^\frac{p}{2}\\
&\leq c_1\E\left(\left[\int_0^T |Z_s|^2ds\right]^\frac{p}{2}+\left[\int_0^T \|U_s\|^2ds\right]^\frac{p}{2}\right)<\infty,
\end{align*}
for some constant $c_1>0$, and further,
\begin{align*}
&\E\left[\int_0^T\left(\alpha(s)\frac{\rho(r)}{r}+\mu(s)\right)|Y_s|ds\right]^p\leq \E\sup_{s\in{[0,T]}}|Y_s|^p\left[\int_0^T\left(\alpha(s)\frac{\rho(r)}{r}+\mu(s)\right)ds\right]^p\\
&\leq c_2\E\sup_{s\in{[0,T]}}|Y_s|^p<\infty,
\end{align*}
for some $c_2>0$.
The finiteness of $\E\left|\int_0^T\mathrm{sign}(Y_s)Z_sdW_s\right|^p$ follows from the Burkholder-Davis-Gundy inequality and the one of $\E\left|\int_{]0,T]\times\R_0}\mathrm{sign}(Y_{s-})U_s(x)\tilde{N}(ds,dx)\right|^p$ follows from \cite[Theorem 3.2]{MarinRoeck}.
As all terms of the right hand side in \eqref{eq:intgeneratorpfinite} are now finite, we get that
\begin{align*}
\E\left[\int_0^T\left|f(s,Y_s,Z_s,U_s)-f_0(s)\right|ds\right]^p<\infty .
\end{align*}
The finiteness of $\E\left[\int_0^T|f_0(s)|ds\right]^p$ implies that $\E\left[\int_0^T|f(s,Y_s,Z_s,U_s)|ds\right]^p<\infty$, as desired. Therefore, with step 1 we conclude that the convergence 
\begin{align*}
\E\left[\int_0^T|f_n(s,Y_s,Z_s,U_s)-f(s,Y_s,Z_s,U_s)|ds\right]^p\to 0
\end{align*}
takes place which proves that $Y_n\to Y$ in $\mathcal{S}^{p\wedge 2}$.\bigskip

 Then, for all $t$,
$$Y_{n,t}\to Y_{t},\quad Y'_{n,t}\to Y'_{t},\quad \mathbb{P}\text{-a.s.,}$$
 and $Y_t\leq Y'_t$, $\mathbb{P}$-a.s. follows.} 
\end{proof}

\begin{acknowledgements}
The authors thank Christel Geiss, University of Jyv\"askyl\"a, and Gunther Leobacher, University of Graz, for fruitful discussions and valuable suggestions.\smallskip

Stefan Kremsner and Alexander Steinicke are supported by the Austrian Science Fund (FWF): Project F5508-N26, which is part of the Special Research Program ``Quasi-Monte Carlo Methods: Theory and Applications''.
\end{acknowledgements}

\section*{Appendix}\label{Appendix}

\subsection{Inequalities}\label{Appendix:Inequalities}

\begin{theorem}[Young's inequality] \label{young_inequality}
	For $R > 0$ and $a,b \in \R$, we can bound the product
	\begin{align*}
	ab = aR^{-\frac{1}{p}} R^{\frac{1}{p}} b \leq \frac{a^p}{pR}+\frac{b^qR^{\frac{q}{p}}}{q}\,
	\end{align*}
	with $p,q>1$ satisfying $\frac{1}{p}+\frac{1}{q}=1$.
\end{theorem}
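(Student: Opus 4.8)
The plan is to reduce the stated weighted inequality to the classical (unweighted) Young inequality and then establish the latter by a short convexity argument. First I would introduce the auxiliary quantities $x := a R^{-1/p}$ and $y := R^{1/p} b$. With this choice the product is unchanged, $xy = ab$, while $\frac{x^p}{p} = \frac{a^p}{pR}$ and $\frac{y^q}{q} = \frac{b^q R^{q/p}}{q}$, so that the claimed bound is nothing but the classical statement $xy \le \frac{x^p}{p} + \frac{y^q}{q}$ transcribed back into the original variables. The only thing to check at this stage is that the powers of $R$ match exactly, which is immediate bookkeeping using $1/p + 1/q = 1$.

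Next I would prove the classical inequality for nonnegative reals $x,y$. The cleanest route uses the concavity of the logarithm together with the fact that $1/p$ and $1/q$ are positive weights summing to $1$: for $x,y > 0$,
$$
\log\!\left(\frac{1}{p}x^p + \frac{1}{q}y^q\right) \ge \frac{1}{p}\log(x^p) + \frac{1}{q}\log(y^q) = \log x + \log y = \log(xy),
$$
and exponentiating (since $\exp$ is increasing) gives $xy \le \frac{x^p}{p} + \frac{y^q}{q}$. The degenerate cases $x = 0$ or $y = 0$ are trivial since the left-hand side vanishes. One could equivalently invoke convexity of $\exp$ or fix $y$ and minimise the function $x \mapsto \frac{x^p}{p} + \frac{y^q}{q} - xy$ in $x$, but the logarithm argument is the shortest.

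The one point deserving a word of care is the hypothesis $a,b \in \R$: for non-integer exponents the symbols $a^p$ and $b^q$ are only meaningful for nonnegative arguments, so I would read the statement (as it is in fact used throughout the paper, always applied to nonnegative norms and integrals) with $a,b \ge 0$; should signed quantities ever occur, one simply replaces $a,b$ by $|a|,|b|$ and uses $ab \le |a|\,|b|$. There is no genuine obstacle here, as the result is entirely standard; the ``hard part'' is merely verifying in the substitution step that the exact exponents of $R$ reproduce those appearing in the statement.
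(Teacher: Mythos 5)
Your proof is correct and follows essentially the same route as the paper, which proves the weighted inequality precisely via the factorization $ab = aR^{-\frac{1}{p}}\cdot R^{\frac{1}{p}}b$ displayed in the statement, followed by the classical Young inequality applied to this pair. Your additional derivation of the classical case via concavity of the logarithm, and your remark that for non-integer exponents one should read $a,b\geq 0$ (or pass to $|a|,|b|$), merely fill in standard details the paper leaves implicit.
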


{\bf The Bihari-LaSalle inequality.} 
For the  Bihari-LaSalle inequality   we refer to \cite[pp. 45-46]{maobook}. Here we formulate a  backward version of it which  has been applied  in \cite{YinMao}.
The proof is analogous to that in \cite{maobook}.

\begin{theorem} \label{bihari-prop} Let $c>0.$ Assume that $\rho: [0,\infty[ \to [0,\infty[$ is a continuous and nondecreasing function such that $\rho(x) >0$ for all 
	$x>0.$ Let $K$ be a nonnegative, integrable Borel function on $[0,T],$ and $y$ a nonnegative, bounded Borel function on $[0,T],$ such that
	\begin{align*}
	y(t) &\le c + \int_t^T K(s) \rho(y(s)) ds.
	\end{align*}   
	Then for $z(t) := c + \int_t^T K(s) \rho(y(s)) ds$
	it holds that
	$$ y(t) \le G^{-1} \left (G(c) +   \int_t^T K(s)  ds \right ) $$
	
	for all $t \in [0,T]$ such that $G(c) +   \int_t^T K(s)  ds \in {\rm dom}(G^{-1}).$
	Here 
	\begin{align*}
	G(x) := \int_1^x \frac{dr}{\rho(r)},
	\end{align*}
	and $G^{-1}$ is the inverse function of $G.$
	
	Furthermore, if for an $\epsilon > 0$, $\int_0^\epsilon \frac{dr}{\rho(r)} = \infty$, then $$y(t)=0$$ for all $t\in [0,T]$.
\end{theorem}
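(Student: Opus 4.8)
The plan is to reduce the integral inequality to a differential inequality for the majorant $z(t) := c + \int_t^T K(s)\rho(y(s))\,ds$ and then integrate it after the change of variables through $G$. First I would record the regularity of $z$: since $y$ is bounded, say by $M$, and $\rho$ is continuous, the integrand $K(s)\rho(y(s))$ is dominated by $(\sup_{[0,M]}\rho)\,K(s)$, which is integrable; hence $z$ is absolutely continuous with $z'(t) = -K(t)\rho(y(t))$ for almost every $t$, with $z(T) = c$ and $z(t) \ge c > 0$ throughout. In particular $y(t) \le z(t)$ by hypothesis.

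The crucial comparison step uses $y(t) \le z(t)$ together with the monotonicity of $\rho$: we get $\rho(y(t)) \le \rho(z(t))$, so that $z'(t) = -K(t)\rho(y(t)) \ge -K(t)\rho(z(t))$ a.e. Since $z(t) \ge c$ and $\rho(c) > 0$, division by $\rho(z(t))$ is legitimate and $1/\rho(z(t))$ is bounded; because $G$ is $C^1$ on $]0,\infty[$ with $G' = 1/\rho$ and the range of $z$ is a compact subinterval of $]0,\infty[$, the composition $G\circ z$ is absolutely continuous and the chain rule gives $\frac{d}{dt}G(z(t)) = \frac{z'(t)}{\rho(z(t))} \ge -K(t)$ for a.e. $t$.

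Integrating this over $[t,T]$ and using $z(T)=c$ yields $G(c) - G(z(t)) \ge -\int_t^T K(s)\,ds$, that is $G(z(t)) \le G(c) + \int_t^T K(s)\,ds$. As $\rho > 0$ makes $G$ strictly increasing, $G^{-1}$ is increasing, and applying it (valid precisely when the right-hand side lies in $\mathrm{dom}(G^{-1})$) together with $y(t) \le z(t)$ gives the claimed bound $y(t) \le G^{-1}\!\left(G(c) + \int_t^T K(s)\,ds\right)$.

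For the final assertion I would let $c \downarrow 0$. When the inequality holds with $c=0$ (the form in which the result is applied), it holds for every $c>0$, so the bound above is available for all such $c$. The Osgood divergence $\int_{0^+} dr/\rho(r) = \infty$ forces $G(c) = -\int_c^1 dr/\rho(r) \to -\infty$ as $c \downarrow 0$; hence $G(c) + \int_t^T K(s)\,ds \to -\infty$, and since $G^{-1}(u)\to 0$ as $u\to-\infty$ (the left end of the range of $G$ corresponds to $x\to 0^+$), we conclude $y(t)=0$ for every $t$. The main obstacle I anticipate is not conceptual but the measure-theoretic bookkeeping: justifying the a.e.\ differentiation of $z$, the chain rule for $G\circ z$, and the fact that $z$ stays bounded away from $0$ so that all divisions by $\rho(z)$ are harmless; the direction of the differential inequality must also be tracked carefully, since the integral runs backward from $T$.
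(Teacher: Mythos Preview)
Your argument is correct and is precisely the standard proof of the backward Bihari--LaSalle inequality: bound $y$ by the absolutely continuous majorant $z$, use monotonicity of $\rho$ to turn the integral inequality into the differential inequality $(G\circ z)'\ge -K$, integrate from $t$ to $T$, and then let $c\downarrow 0$ under the Osgood condition. The paper does not actually spell out a proof of this statement; it only says that the proof ``is analogous to that in \cite{maobook}'' (the forward version in Mao's book), and your write-up is exactly that analogue, so there is nothing to compare beyond noting that you have supplied the details the paper omits.
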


\begin{remark}
	As a special case we have Gronwall's inequality: If $\rho(r)=r$ for $r\in [0,\infty[,$ we get
	\equa
	y(t)\le c e^{\int_t^T K(s)  ds}.
	\tion
\end{remark}

\subsection{Construction of predictable and progressively measurable versions}\label{Appendix:sec2}

We will use the next Lemma for the construction of progressively measurable versions of conditional expectations necessary for the proof of \autoref{comparison}. To prepare it, we need the following definitions, appearing also in \cite[Definition 3.2]{geiss2018monotonic} :\medskip

Assume that $\mathbbm{F}:= \left(\mathcal{F}^{[s]}\right)_{s\in[0,\infty[}$  is built using the definitions from \autoref{sec:finlev}, where $[\cdot]$ denotes the floor function.  
Recall that $\mathcal{P}$ denotes the predictable $\sigma$-algebra according to the filtration $(\mathcal{F}_t)_{t\in [0,T]}$.
Let $(V,\mathcal{V},\mu)$ be a $\sigma$-finite measure space and $K\colon\Omega\times{[0,T]}\times V\to\mathbb{R}$ be a bounded, $\mathcal{P}\otimes\mathcal{V}$-measurable process. 
Define $\phantom{I}^{\!\!\!o,\mathbbm{F}\!\!} K$ to be the optional projection of the process 
\begin{align*}
{[0,\infty[}\times\Omega\times{[0,T]}\times V \to&\ \ \R,\\
(s,\omega,t,x)\mapsto& \ K(\omega,t,x)
\end{align*} 
in the variables $(s,\omega)$ with respect to 
$\mathbbm{F},$ and with  parameters $(t,x).$
For each $n\ge 0$, assume that the filtration $\mathbbm{F}^n :=\left(\mathcal{F}_t^n\right)_{t\in{[0,T]}}$ is given by $\mathcal{F}_t^n:=\mathcal{F}_t\cap\mathcal{F}^n.$
Let $K_n$ be the predictable projection of 
$$(\omega,t,x)\mapsto \phantom{I}^{\!\!\!o,\mathbbm{F}\!\!} K(n,\omega,t,x)$$
 with respect to  $\mathbbm{F}^n$ with parameter $x$. 

\bigskip
The reason for using the family of filtrations  $\left(\mathcal{F}^{[s]}\right)_{s\in[0,\infty[}$  instead of the sequence $(\mathcal{F}^n)_{n=0}^\infty$  from \autoref{sec:finlev} is  
that one can apply  known measurability  results w.r.t.~right continuous filtrations instead of proving measurability here directly. Indeed,
the optional projection  $\phantom{I}^{\!\!\!o,\mathbbm{F}\!\!}K$ defined above  is jointly measurable in $(s,\omega,t,x).$  For this we refer to  \cite{meyer}, 
where predictable and optional projections of random processes depending on parameters were considered, and their uniqueness up to indistinguishability
was shown. 

It follows that for all $(t,x) \in [0,T] \times V$, $$\phantom{I}^{\!\!\!o,\mathbbm{F}\!\!}K(n,t,x)=\E_n K(t,x), \quad \mathbb{P}\text{-a.s.} $$  

Then, since $K$ is $\left(\mathcal{F}_t\right)_{t\in{[0,T]}}$-predictable, for all $n\geq 0$, $t\in {[0,T]}$ and all $x \in V$, it holds that
\begin{equation}\label{optionalcond}
K_n(t,x)=\E_n K(t,x), \quad\mathbb{P}\text{-a.s.}
\end{equation}
Hence $K_n(t,x)$ is a jointly measurable version of $\E_n K(t,x)$ which  is $\left(\mathcal{F}_t^n\right)_{t\in{[0,T]}}$-predictable, so in particular it is predictable. \\

To extend the above construction to unbounded processes $K$, we have the following result:

\changed{
\begin{lemma}\label{measurable-version}
	Let $(V,\mathcal{V},\mu)$ be a $\sigma$-finite measure space and $K\colon\Omega\times{[0,T]}\times V\to\mathbb{R}$ be a $\mathcal{P}\otimes\mathcal{V}$-measurable process such that for $\lambda\otimes\mu$-almost all $(t,x)\in {[0,T]}\times V$,
	\begin{align*}
	\E |K(t,x)|<\infty.
	\end{align*}
	
There is a $\mathcal{P}\otimes\mathcal{V}$-measurable process $\left(K_n(t,x)\right)_{(t,x)\in [0,T]\times V}$, and $$K_n(\omega,t,x)=\E_n K(\omega,t,x), \mathbb{P}\text{-a.s. for }\lambda\otimes\mu\text{-a.a. }(t,x)\in [0,T]\times V.$$
\end{lemma}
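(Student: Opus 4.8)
The plan is to reduce to the bounded case already treated in the construction preceding the lemma, via a truncation argument, and then to recover joint measurability by taking a pointwise $\limsup$ over the truncation level. Throughout, fix $n \ge 0$.

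First I would truncate. For $m \in \N$ set $K^{(m)} := (K \wedge m) \vee (-m)$; since this is the composition of $K$ with a Lipschitz function, each $K^{(m)}$ is a bounded, $\mathcal{P} \otimes \mathcal{V}$-measurable process. The construction established before the lemma therefore applies to $K^{(m)}$: there is a $\mathcal{P} \otimes \mathcal{V}$-measurable process $K^{(m)}_n$ with $K^{(m)}_n(\omega,t,x) = \E_n K^{(m)}(\omega,t,x)$, $\PP$-a.s., for all $(t,x) \in [0,T] \times V$, cf.\ \eqref{optionalcond}.

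Next I would pass to the limit $m \to \infty$. For every $(t,x)$ with $\E|K(t,x)| < \infty$ — which holds for $\lambda \otimes \mu$-a.a.\ $(t,x)$ by hypothesis — one has $K^{(m)}(t,x) \to K(t,x)$ pointwise with $|K^{(m)}(t,x)| \le |K(t,x)| \in L^1$, so the conditional dominated convergence theorem (applied for the fixed parameter $(t,x)$) yields $\E_n K^{(m)}(t,x) \to \E_n K(t,x)$, $\PP$-a.s., as $m \to \infty$; equivalently $K^{(m)}_n(t,x) \to \E_n K(t,x)$, $\PP$-a.s. I would then \emph{define} $K_n(\omega,t,x) := \limsup_{m \to \infty} K^{(m)}_n(\omega,t,x)$. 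As a $\limsup$ over the countable index set $\N$ of $\mathcal{P} \otimes \mathcal{V}$-measurable processes, $K_n$ is $\mathcal{P} \otimes \mathcal{V}$-measurable on all of $\Omega \times [0,T] \times V$, with no exceptional set. On the $\PP \otimes \lambda \otimes \mu$-full set where the above convergence holds, the $\limsup$ coincides with the genuine limit $\E_n K(t,x)$, so $K_n(\omega,t,x) = \E_n K(\omega,t,x)$, $\PP$-a.s., for $\lambda \otimes \mu$-a.a.\ $(t,x)$, as claimed.

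The only point requiring care — and the step I would expect to be the main, if mild, obstacle — is the bookkeeping of the two different null-set structures: the constructed object must be measurable everywhere on $\Omega \times [0,T] \times V$, whereas the defining identity $K_n = \E_n K$ is only demanded modulo $\PP$-null sets for $\lambda \otimes \mu$-a.a.\ parameters. Using the $\limsup$ rather than an unqualified limit (which need not exist pointwise on the bad $\PP$-null sets, and whose domain of existence is itself a measurable but a priori inconvenient set) is precisely what decouples these requirements: it produces a globally measurable process while leaving the almost-everywhere identification intact. Because the conditional dominated convergence theorem is invoked separately for each fixed $(t,x)$, no additional joint-measurability difficulty enters at that stage.
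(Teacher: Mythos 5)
Your proposal is correct, and it takes a genuinely different --- and lighter --- route than the paper's own proof. Both arguments truncate (your $(K\wedge m)\vee(-m)$ versus the paper's radial truncation $\tfrac{m}{|K|\vee m}K$; for scalar $K$ this is immaterial) and both feed the truncations into the bounded construction preceding the lemma, so that \eqref{optionalcond} holds for every $(t,x)$. The divergence is in how the limit $m\to\infty$ is organized. The paper works jointly on the product space: it shows that $(K^m_n)_{m\ge 1}$ is Cauchy locally in $\PP\otimes\lambda\otimes\mu$-measure (via the $|\cdot|\wedge 1$ estimate and the contraction property of $\E_{t-}\E_n$), extracts an a.e.-convergent subsequence, obtains a limit that is measurable only with respect to the trace $\sigma$-algebra $(\mathcal{P}\otimes\mathcal{V})_M$ on a full set $M$, and then must invoke the measurable-extension theorem of \cite{shortt} to produce a globally $\mathcal{P}\otimes\mathcal{V}$-measurable $K_n$, finishing with conditional dominated convergence to identify $K_n=\E_{t-}\E_n K=\E_n K$ a.e. You instead exploit the fact that the lemma's conclusion couples the null sets only weakly --- ``$\PP$-a.s.\ for $\lambda\otimes\mu$-a.a.\ $(t,x)$'' rather than jointly a.e.\ with one exceptional set --- so a pointwise $\limsup$ over the countable truncation index suffices: it is $\mathcal{P}\otimes\mathcal{V}$-measurable everywhere by construction, and for each fixed good $(t,x)$ the conditional dominated convergence theorem (with the $m$-dependent null sets unioned over $m\in\N$) identifies it $\PP$-a.s.\ with $\E_n K(t,x)$. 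This eliminates the convergence-in-measure step, the subsequence extraction, and the appeal to \cite{shortt} altogether. What the paper's heavier route buys is that its $K_n$ arises as a genuine $\PP\otimes\lambda\otimes\mu$-a.e.\ limit of the $K^m_n$ on a single joint full set --- slightly stronger information than your parameterwise identification --- but nothing in the lemma's statement, nor in its application in \autoref{comparison}, requires this.

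One cosmetic repair: your $\limsup$ is a priori $[-\infty,+\infty]$-valued on the exceptional set, so to obtain a real-valued process you should set $K_n:=\bigl(\limsup_{m\to\infty}K^{(m)}_n\bigr)\chi_{\{\,|\limsup_{m}K^{(m)}_n|<\infty\,\}}$ (i.e., replace the $\limsup$ by $0$ where it is infinite); the set where the $\limsup$ is finite is $\mathcal{P}\otimes\mathcal{V}$-measurable, so this modification preserves measurability and does not affect the almost-everywhere identity.
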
}

\begin{proof}
For $m \ge 1$ let $K^m:=\frac{m}{|K|\vee m}K$ be a bounded approximation for $K$. For each $m$, we can construct the process $K^m_n$, as described above the lemma. We intend to let $m\to\infty$. By the definition of the optional and predictable projections, we have that for all $(t,x) \in [0,T] \times V$, \begin{align*}
K^m_n(t,x)=\E_{t-}\E_n K^m(t,x),\quad\mathbb{P}\text{-a.s.,}
\end{align*} 
where we emphasize that $\E_{t-}$ is the conditional expectation with respect to the $\sigma$-algebra $\mathcal{F}_{t-}=\sigma\left(\bigcup_{0\leq s<t}\mathcal{F}_s\right)$ and $\E_n$ denotes the expectation with respect to the $\sigma$-algebra $\mathcal{F}^n$.
Using the assumption $\E |K(t,x)|<\infty$ for $\lambda\otimes\mu$-a.a. $(t,x)$, we find by dominated convergence that for a set $A\in \mathcal{B}([0,T])\otimes\mathcal{V}$ with $(\lambda\otimes\mu)(A)<\infty$,
\begin{align*}
&\int_A \E \left(|K^{m_1}_n(t,x)-K^{m_2}_n(t,x)|\wedge 1\right)(\lambda\otimes\mu)(dt, dx)\\
&=\int_A \E \left(|\E_{t-}\E_n K^{m_1}(t,x)-\E_{t-}\E_nK^{m_2}(t,x)|\wedge 1\right) (\lambda\otimes\mu)(dt, dx)\\
&\leq \int_A \E \left(|K^{m_1}(t,x)-K^{m_2}(t,x)|\wedge 1\right) (\lambda\otimes\mu)(dt, dx) \to 0\quad\text{as }m_2>m_1\to\infty.
\end{align*}

Since $(V,\mathcal{V},\mu)$ is $\sigma$-finite, it follows that the sequence $(K^m_n)_{m\geq 1}$ converges also locally in the measure $\mathbb{P}\otimes\lambda\otimes\mu$, from which we then infer that we can extract a subsequence $(K^{m_k}_n)_{k\geq 1}$ that converges $\mathbb{P}\otimes\lambda\otimes\mu$-a.e. 
Without loss of generality we assume that $(K^m_n)_{m\geq 1}$ itself converges $\mathbb{P}\otimes\lambda\otimes\mu$-a.e.
This means that one can find an $\mathcal{F}\otimes\mathcal{B}([0,T])\otimes\mathcal{V}$-measurable process $\tilde K_n$ and a set $M\in \mathcal{F}\otimes\mathcal{B}([0,T])\otimes\mathcal{V}$ with $\mathbb{P}\otimes\lambda\otimes\mu\left((\Omega\times{[0,T]}\times V)\setminus M\right)=0$ such that for all $(\omega,t,x)\in M$, $\lim_{m\to\infty}K^m_n(\omega,t,x)=\tilde K_n(\omega,t,x)$. On the $\sigma$-algebra 
\begin{align*}
(\mathcal{P}\otimes\mathcal{V})_M:=\left\{A\cap M : A\in \mathcal{P}\otimes\mathcal{V}\right\},
\end{align*}
we have that the process $\tilde K_n$ restricted to the set $M$ is the limit of the processes $(K^m_n)$ restricted to $M$ and is therefore $(\mathcal{P}\otimes\mathcal{V})_M$-measurable. In the sense of \cite[Theorem 1]{shortt} we can then extend the restriction of the process $\tilde K_n$ to a $\mathcal{P}\otimes\mathcal{V}$-measurable process $K_n$. 

To show that the process $K_n$ satisfies condition \eqref{optionalcond} for $\lambda\otimes\mu$-a.a. $(t,x)\in [0,T]\times V$, one can use the dominated convergence theorem for conditional expectations, 
\begin{align*}
K_n(t,x)=\lim_{m\to\infty} K^m_n(t,x)=\lim_{m\to\infty}\E_{t-}\E_n K^m(t,x)=\E_{t-}\E_n\lim_{m\to\infty} K^m(t,x)=\E_{t-}\E_n K(t,x),
\end{align*}
where the latter equals $\E_nK(t,x), \lambda\otimes\mu$-a.e. since $K$ is a predictable process.
\end{proof}  

\begin{remark}\label{measurable-version-remark}
\begin{enumerate}[label=({\roman*})]
\item A similar proof as above can be done if one considers optional processes instead of predictable ones.
	If the process $K$ does not depend on an additional parameter set $V$ and is such that $\E|K(t)|<\infty$, for all $t$, then again very similar arguments as in the above Lemma can be used to find that there is a progressively measurable process $K_n$ with
	$\E_nK(t)=K_n(t)$ for all $t\in [0,T]$ (this case is the one needed for a progressively measurable version of $(\E_n Y_t)_{t\in [0,T]}$ in the proof of Theorem \ref{comparison}).
	\item A different way of proving \autoref{measurable-version} is to represent the process $K$ by a functional $F^K\colon D[0,T]\times[0,T]\times V$, where $D[0,T]$ denotes the space of c\`adl\`ag functions on $[0,T]$ endowed with the $\sigma$-algebra generated by the projection maps $p_t\colon D[0,T]\to\mathbb{R}, \mathrm{x}\mapsto\mathrm{x}_t$. The measure we consider on this sigma field is the pushforward measure $\mathbb{P}_X$ of the L\'evy process $X$, given by the trajectory mapping $X\colon\Omega\to D[0,T], \omega\mapsto X(\omega)$. We have that $K(t,x)=F^K(X,t,x)$ up to indistinguishability (for details on this representation see \cite{SteinickeI}). Denoting the L\'evy process with cut-off L\'evy measure again by $X^n$ (see \autoref{sec:finlev}), one can then show that the process $G$ given by 
	\begin{align*}
		G(t,x)=\begin{cases}\E F^K(h+X-X^n,t,x)\Big|_{h=X^n}, &\text{whenever the expectation exists and is finite,}\\
	0, &\text{else},\end{cases}
	\end{align*}
	possesses a predictable version that satisfies the assertion of \autoref{measurable-version}.
\end{enumerate}
	
\end{remark}

% Authors must disclose all relationships or interests that 
% could have direct or potential influence or impart bias on 
% the work: 
%
% \section*{Conflict of interest}
%
% The authors declare that they have no conflict of interest.

% BibTeX users please use one of
%\bibliographystyle{spbasic}      % basic style, author-year citations
%\bibliographystyle{spmpsci}      % mathematics and physical sciences
%\bibliographystyle{spphys}       % APS-like style for physics
%\bibliography{}   % name your BibTeX data base

\begin{thebibliography}{}
%
% and use \bibitem to create references. Consult the Instructions
% for authors for reference list style.
%
%\bibitem{RefJ}
%% Format for Journal Reference
%Author, Article title, Journal, Volume, page numbers (year)
%% Format for books
%\bibitem{RefB}
%Author, Book title, page numbers. Publisher, place (year)

\bibitem{applebaum}
D.~Applebaum.
\newblock { L{\'e}vy processes and stochastic calculus}.
\newblock Cambridge university press, (2009).

\bibitem{briand2000bsdes}
P.~Briand and R.~Carmona.
\newblock {B}{S}{D}{E}s with polynomial growth generators.
\newblock { International Journal of Stochastic Analysis}, 13(3):207--238,
(2000).

\bibitem{briand2003lp}
P.~Briand, B.~Delyon, Y.~Hu, {\'E}.~Pardoux, and L.~Stoica.
\newblock L$^p$ solutions of backward stochastic differential equations.
\newblock { Stochastic Processes and their Applications}, 108(1):109--129,
(2003).

\bibitem{BriandRichou}
P.~Briand and A.~Richou.
\newblock On the uniqueness of solutions to quadratic {BSDE}s with non-convex
generators.
\newblock In S.~N. Cohen, I.~Gy{\"o}ngy, G.~dos Reis, D.~Siska, and
{\L}.~Szpruch, editors, {Frontiers in Stochastic Analysis–{BSDE}s,
	{SPDE}s and their Applications}, chapter~3. Springer, (2017).

\bibitem{buckdahn2018uniqueness}
R.~Buckdahn, Y.~Hu, and S.~Tang.
\newblock Uniqueness of solution to scalar {B}{S}{D}{E}s with {$ {L}\exp {\left
		(\mu\sqrt {2\log {(1+ {L})}}\,\right)} $}-integrable terminal values.
\newblock {Electronic Communications in Probability}, 23, (2018).

%\bibitem{durrett}
%R.~Durrett.
%\newblock {Stochastic calculus: a practical introduction}, volume~6.
%\newblock CRC Press, (1996).

\bibitem{carmona}
N.~El~Karoui, S.~Hamad\`ene, and A.~Matoussi.
\newblock Backward stochastic differential equations.
\newblock In {R. Carmona (ed.), Indifference Hedging: Theory and
	Applications}, 267--320. Princeton University Press, (2009).

\bibitem{elkaroui}
N.~El~Karoui, S.~Peng, and M.~C. Quenez.
\newblock Backward stochastic differential equations in {finance}.
\newblock {Mathematical Finance}, 7(1):1--71, (1997).

\bibitem{fan2019existence}
S.~Fan and Y.~Hu.
\newblock Existence and uniqueness of solution to scalar {B}{S}{D}{E}s with {$
	{L}\exp {\left (\mu\sqrt {2\log {(1+ {L})}}\,\right)} $}-integrable terminal
values: the critical case.
\newblock {arXiv preprint arXiv:1904.02761}, (2019).

\bibitem{FujiiTakahashi}
M.~Fujii and A.~Takahashi.
\newblock Quadratic–exponential growth {BSDE}s with jumps and their
{M}alliavin’s differentiability.
\newblock {Stochastic Processes and their Applications}, 128(6):2083 --
2130, (2018).

\bibitem{geiss2018monotonic}
C.~Geiss and A.~Steinicke.
\newblock Existence, uniqueness and comparison results for {B}{S}{D}{E}s with
{L}{\'e}vy jumps in an extended monotonic generator setting.
\newblock {Probability, Uncertainty and Quantitative Risk}, 3(1):9, (2018).

\bibitem{HeWangYan}
S.~W. He, J.~G. Wang., J.~A. Yan.
\newblock Semimartingale theory and stochastic calculus, CRC Press, (1992).

\bibitem{kobylanski2000}
M.~Kobylanski.
\newblock Backward stochastic differential equations and partial differential
equations with quadratic growth.
\newblock {Ann. Probab.}, 28(2):558--602, 04 (2000).

\bibitem{Kruse}
T.~Kruse and A.~Popier.
\newblock {B}{S}{D}{E}s with monotone generator driven by {B}rownian and
{P}oisson noises in a general filtration.
\newblock {Stochastics}, 88(4):491--539, (2016).

\bibitem{Kruse17}
T.~Kruse and A.~Popier.
\newblock $l^p$-solution for {BSDE}s with jumps in the case $p<2$.
\newblock {Stochastics}, 89(8):1201--1227, (2017).

\bibitem{mao}
X.~Mao.
\newblock Adapted solutions of backward stochastic differential equations with
non-{L}ipschitz coefficients.
\newblock {Stochastic Processes and their Applications}, 58(2):281--292,
(1995).

\bibitem{maobook}
X.~Mao.
\newblock {Stochastic Differential Equations and Applications}.
\newblock Harwood, Chichester, (1997).

\bibitem{MarinRoeck}
C.~Marinelli and M.~R\"ockner.
\newblock On maximal inequalities for purely discontinuous martingales in
infinite dimensions.
\newblock In {S{\'e}minaire de Probabilit{\'e}s XLVI}, volume 2123, 
293--315. Springer, Cham, (2014).

\bibitem{meyer}
P.-A. Meyer.
\newblock Une remarque sur le calcul stochastique d{\'e}pendant d'un
param{\`e}tre.
\newblock In {S{\'e}minaire de Probabilit{\'e}s XIII}, 199--215.
Springer, (1979).

\bibitem{morlais2009utility}
M.-A. Morlais.
\newblock Utility maximization in a jump market model.
\newblock {Stochastics: An International Journal of Probability and
	Stochastics Processes}, 81(1):1--27, (2009).

\bibitem{pardoux}
{\'E}.~Pardoux.
\newblock Generalized discontinuous backward stochastic differential equations.
\newblock {Pitman Research Notes in Mathematics Series}, 207--219,
(1997).

\bibitem{pardoux1990adapted}
{\'E}.~Pardoux and S.~Peng.
\newblock Adapted solution of a backward stochastic differential equation.
\newblock {Systems \& Control Letters}, 14(1):55--61, (1990).

\bibitem{protter}
P.~Protter.
\newblock Stochastic integration and stochastic differential equations.
\newblock {Springer, Berlin}, (2004).

%@misc{protter2004stochastic,
%	title={Stochastic integration and stochastic differential equations},
%	author={Protter, Philip E},
%	year={2004},
%	publisher={Springer, Berlin}
%}

\bibitem{royer}
M.~Royer.
\newblock Backward stochastic differential equations with jumps and related
non-linear expectations.
\newblock {Stochastic processes and their applications},
116(10):1358--1376, (2006).

\bibitem{satou}
K.-I. Sato.
\newblock {L{\'e}vy processes and infinitely divisible distributions}.
\newblock Cambridge university press, (1999).

\bibitem{shortt} R.M. Shortt, 
\newblock{The extension of measurable functions}. 
\newblock Proceedings of  the American Mathematical Society, 87(3), (1983).

\bibitem{bamba}
A.~B. Sow et~al.
\newblock {B}{S}{D}{E} with jumps and non-{L}ipschitz coefficients: Application
to large deviations.
\newblock {Brazilian Journal of Probability and Statistics}, 28(1):96--108,
(2014).

\bibitem{SteinickeI}
A.~Steinicke.
\newblock Functionals of a {L}{\'e}vy process on canonical and generic
probability spaces.
\newblock {Journal of Theoretical Probability}, 29(2):443--458, (2016).

\bibitem{tang1994necessary}
S.~Tang and X.~Li.
\newblock Necessary conditions for optimal control of stochastic systems with
random jumps.
\newblock {SIAM Journal on Control and Optimization}, 32(5):1447--1475,
(1994).

\bibitem{Yao}
S.~Yao.
\newblock L$^p$ solutions of backward stochastic differential equations with
jumps.
\newblock {Stochastic Processes and their Applications}, 3465--3511,
(2017).

\bibitem{YinMao}
J.~Yin and X.~Mao.
\newblock The adapted solution and comparison theorem for backward stochastic
differential equations with {P}oisson jumps and applications.
\newblock {Journal of Mathematical Analysis and Applications},
346(2):345--358, (2008).

\end{thebibliography}

% Non-BibTeX users please use

\end{document}